\documentclass[10pt]{amsart}
\usepackage{amsmath,amsfonts,amssymb}
\usepackage{graphicx, verbatim}
\usepackage[numbers,square,sort&compress]{natbib} 
\usepackage{times}
\usepackage{xspace}
\usepackage{xypic}
\usepackage{color}
\usepackage{a4wide}
\usepackage{url}
\usepackage{tikz}
\usetikzlibrary{arrows}
\usetikzlibrary{decorations.markings}
\usetikzlibrary{decorations.pathreplacing}
\usetikzlibrary{patterns}
\usepackage{tkz-graph}
\usetikzlibrary{shapes.geometric}
\usepackage{subfig}
\usepackage{enumerate}

 \newcounter{comments}
 \setcounter{comments}{0}

\graphicspath{{./}{./figs/}{../../figs/}}

\theoremstyle{definition}
\newtheorem{running}{Running Example}

\newenvironment{runningcont}
{\addtocounter{running}{-1}\begin{running}{\textit{\textbf{{(continued) }}}}}
  {\end{running}}

\theoremstyle{definition}
\newtheorem{defn}{Definition}[section]
\newtheorem{prop}[defn]{Proposition}
\newtheorem{lem}[defn]{Lemma}
\newtheorem{thm}[defn]{Theorem}

\newtheorem{ex}[defn]{Example}
\newtheorem{defn-thm}[defn]{Definition / Theorem}
\newtheorem{conj}[defn]{Conjecture}
\numberwithin{equation}{section}

\def\N{\mathbb{N}}

\def\R{\mathbb{R}}
\def\Z{\mathbb{Z}}

\def\AAA{\mathcal{A}}
\def\HHH{\mathcal{H}}
\def\LLL{\mathcal{L}}

\def\RRR{\mathcal{R}}
\def\TTT{\mathcal{T}}
\def\CCC{\mathcal{C}}

\DeclareMathOperator{\Acyc}{Acyc}
\DeclareMathOperator{\Conj}{Conj}
\DeclareMathOperator{\C}{C}
\DeclareMathOperator{\FC}{FC}
\DeclareMathOperator{\CFC}{CFC}
\DeclareMathOperator{\TFC}{TFC}

\DeclareMathOperator{\cl}{cl}

\DeclareMathOperator{\tor}{tor}
\DeclareMathOperator{\Cyc}{Cyc}

\DeclareMathOperator{\Hasse}{Hasse}
\DeclareMathOperator{\torHasse}{torHasse}
\DeclareMathOperator{\Chambers}{Cham}
\DeclareMathOperator{\Cent}{Cent}

\newcommand{\catname}[1]{{\normalfont\textbf{#1}}}
\newcommand{\HEAP}{\catname{Heap}}
\newcommand{\TORHEAP}{\catname{torHeap}}

\def\<{\langle}
\def\>{\rangle}

\def\longto{\longrightarrow}

\newcommand{\supp}{\mathrm{supp}}
\newcommand{\setmin}{{-}}

\def\wt{\widetilde}

\def\epsilon{\varepsilon}
\newcommand{\COMMENT}[1]{}
\def\c{\mathsf{c}}

\def\u{\mathsf{u}}

\def\w{\mathsf{w}}
\def\x{\mathbf{x}}
\def\y{\mathbf{y}}

\tikzset{bigarrow/.style={decoration={markings,mark=at position 0.86
      with {\arrow[scale=1.25]{>}}}, postaction={decorate}, 
    shorten >= 3pt, shorten <= 3pt}}

\tikzset{bigarrow2/.style={decoration={markings,mark=at position 0.94
      with {\arrow[scale=1.25]{>}}}, postaction={decorate}, 
    shorten >= 3pt, shorten <= 3pt}}

\tikzset{big arrow/.style={decoration={markings,mark=at position 0.93
      with {\arrow[scale=2]{>}}}, postaction={decorate}, shorten >=8pt}}

\tikzset{big dashed arrow/.style={decoration={markings,mark=at position 0.93
      with {\arrow[scale=2]{>}}}, postaction={decorate}, shorten >=8pt, dashed}}

\tikzset{dashed line/.style={decoration={markings,mark=at position 0.93
      with {\arrow[scale=2]{>}}}, postaction={decorate}, shorten >=8pt, dashed}}

\tikzset{ba2/.style={decoration={markings,mark=at position 0.88
      with {\arrow[scale=1.25]{>}}}, postaction={decorate}, 
    shorten >= 5pt, shorten <= 5pt}}

\tikzset{ba3/.style={decoration={markings,mark=at position 0.98
      with {\arrow[scale=1.25]{>}}}, postaction={decorate}, 
    shorten >= 0pt, shorten <= 0pt}}

 \colorlet{lightred}{red!30!white}
 \colorlet{lightblue}{blue!30!white}
 \colorlet{lightyellow}{yellow!30!white}
 \colorlet{keylime}{green!10!white}

\begin{document}

\title{Toric Heaps, Cyclic Reducibility, and Conjugacy in Coxeter Groups}

\author[S.~Chao]{Shih-Wei Chao} \address{Department of Mathematics, University of North Georgia, Dahlonega, GA 30597}
\email{shih-wei.chao@ung.edu}

\author[M.~Macauley]{Matthew Macauley} \address{School of Mathematical
  and Statistical Sciences, Clemson University, Clemson, SC 29634}
\email{macaule@clemson.edu}

\thanks{The second author was partially supported by a Simons Foundation Collaboration Grant for Mathematicians, Award \#358242.}

\keywords{Conjugacy, Coxeter group, CFC, cyclic reducibility, faux CFC, cyclically fully commutative, heap, logarithmic, morphism, TFC, torically fully commutative, toric heap, toric poset, toric reducibility, trace monoid} \subjclass[2010]{06A75;20F55;06A06}

\medskip

\begin{abstract}
  As a visualization of Cartier and Foata's ``partially commutative monoid'' theory, G.X.~Viennot introduced \emph{heaps of pieces} in 1986. These are essentially labeled posets satisfying a few additional properties. They~naturally~arise~as~ models~of~reduced~words~in~Coxeter groups. In~this~paper,~we~introduce~a~cyclic~ version, motivated~by~the~idea~of~taking~a~heap~and~wrapping~it into~a~cylinder. We~call~this~object~a~\emph{toric~heap},~as~we formalize it as a labeled toric poset, which is a cyclic version of an ordinary poset. To~define~the~concept~of a~toric~extension,~we~develop a morphism in the category of toric heaps. We study~toric~heaps~in~Coxeter~theory, in~view~of~the~fact~that~a~cyclic shift of a reduced word is simply a conjugate by an initial or terminal generator. This allows us to formalize and study a framework of \emph{cyclic reducibility} in Coxeter theory, and apply it to model~conjugacy.~We~introduce~the notion of \emph{torically reduced}, which is stronger than being cyclically reduced for group elements. This gives rise to a new class of elements called \emph{torically fully commutative} (TFC), which are those that have a unique cyclic commutativity class, and comprise a strictly bigger class than the \emph{cyclically fully commutative} (CFC) elements. We prove several cyclic analogues of results on fully commutative (FC) elements due to Stembridge. We conclude with how this framework fits into recent work in Coxeter groups, and we correct a minor flaw in a few recently published theorems.
\end{abstract}

\maketitle

\section{Introduction and Overview}\label{sec:intro}

In mathematics and computer science, a \emph{trace} is a set of
strings or words over an alphabet $S$ for which certain pairs are
allowed to commute. The commutativity rules can be encoded by an
undirected graph $\Gamma$ called the \emph{dependency graph}, where
the vertices are the letters and edges correspond to non-commuting
pairs. Given a trace, the associated \emph{trace monoid} is the set of
finite words under the equivalence relation generated by these
commutations, where the binary operation is concatenation. The
combinatorics of trace monoids were studied by Cartier and Foata in
the 1960s, who called them \emph{partially commutative
  monoids}~\cite{cartier1969problemes}. They are now sometimes known
as \emph{Cartier--Foata monoids}. We will stick with the term ``trace
monoid'' for brevity. In 1986,
G.~X.~Viennot~\cite{viennot1986combinatoire} introduced the theory of
\emph{heaps of pieces}, which is a combinatorial interpretation of
these objects that leads to a nice way to visualize them. The
``pieces'' represent the distinct letters in the alphabet, and a
string is represented by a vertical stack, or ``heap'' of these
pieces. Two pieces overlap vertically if the corresponding letters do
not commute, as elements in the monoid.  A simple example of this
follows.

 \begin{figure}[!ht]
  \centering
  \begin{tikzpicture}[scale=.95, >=stealth']
    \begin{scope}[shift={(6.5,0)}]
      \draw {(0,0.15) circle (0.15in) node[]{$a$}};
      \draw {(0,1.45) circle (0.15in) node[]{$a$}};
      \draw {(0.4,0.8) circle (0.15in) node[]{$b$}};
      \draw {(0.4,2.1) circle (0.15in) node[]{$b$}};
      \draw {(0.8,0.15) circle (0.15in) node[]{$c$}};
      \draw {(1.2,0.8) circle (0.15in)node[]{$d$} };
      \draw {(1.6,1.45) circle (0.15in) node[]{$e$}};
      \draw {(2.8,0.8) circle (0.15in) node[]{$f$}};
      \draw {(3.2,0.15) circle (0.15in) node[]{$g$}};
      \draw {[-] (-0.8,-.22)--(4,-.22)};
    \end{scope}
    \begin{scope}[shift={(11.5,0)}, shorten >= 3pt, shorten <= 3pt]
      \draw[fill=black] {(0,0) circle (1.5pt) node[label=left:$a\!\!$]{}};
      \draw[fill=black] {(0,1.2) circle (1.5pt) node[label=left:$a\!\!$]{}};
      \draw[fill=black] {(0.6,0.6) circle (1.5pt) node[label=above:$b$]{}};
      \draw[fill=black] {(0.6,1.8) circle (1.5pt) node[label=above:$b$]{}};
      \draw[fill=black] {(1.2,0) circle (1.5pt) node[label=above:$c$]{}};
      \draw[fill=black] {(1.8,0.6) circle (1.5pt) node[label=above:$d$]{}};
      \draw[fill=black] {(2.4,1.2) circle (1.5pt) node[label=above:$e$]{}};
      \draw[fill=black] {(3,0.6) circle (1.5pt) node[label=above:$f$]{}};
      \draw[fill=black] {(3.6,0) circle (1.5pt) node[label=above:$g$]{}};
      \draw[bigarrow] (0,0)--(0.6,0.6);
      \draw[bigarrow] (1.2,0)--(.6,.6); 
      \draw[bigarrow] (.6,.6)--(0,1.2); 
      \draw[bigarrow] (0,1.2)--(.6,1.8);
      \draw[bigarrow] (1.2,0) to (1.8,0.6);
      \draw[bigarrow] (1.8,0.6)--(2.4,1.2);
      \draw[bigarrow] (3.6,0) to (3,0.6);
    \end{scope}
    \begin{scope}[shift={(0,0)}]
      \draw[fill=black] {(0,0) circle (1.5pt) node[label=above:$a$]{}};
      \draw[fill=black] {(.8,0) circle (1.5pt) node[label=above:$b$]{}};
      \draw[fill=black] {(1.6,0) circle (1.5pt) node[label=above:$c$]{}};
      \draw[fill=black] {(2.4,0) circle (1.5pt) node[label=above:$d$]{}};
      \draw[fill=black] {(3.2,0) circle (1.5pt) node[label=above:$e$]{}};
      \draw[fill=black] {(4,0) circle (1.5pt) node[label=above:$f$]{}};
      \draw[fill=black] {(4.8,0) circle (1.5pt) node[label=above:$g$]{}};
      \draw {[-] (0,0)--(3.2,0) [-] (4,0)--(4.8,0) };
    \end{scope}
  \end{tikzpicture}
  \caption{The dependency graph $\Gamma$ of a trace monoid (left), a heap of pieces (middle), and its associated labeled poset $P$ (right).}
  \label{fig:heap-example1}
\end{figure}
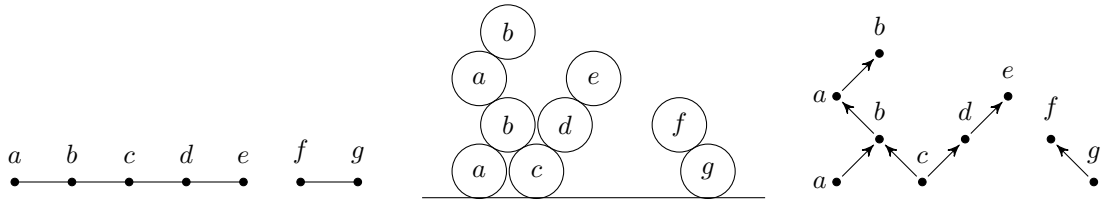

 \begin{ex}\label{ex:heaps-of-pieces}
  Consider the trace monoid $S^*$ over the alphabet $S=\{a,b,c,d,e,f,g\}$ with dependency graph $\Gamma$ shown on the left in Figure~\ref{fig:heap-example1}. That is, two letters commute if and only if they are non-adjacent in $\Gamma$. The string $acbabgdfe$ in $S^*$ defines a heap of pieces shown in the middle of Figure~\ref{fig:heap-example1}. One can think of this as being built by dropping balls in a ``Towers of Hanoi'' fashion onto this dependency graph -- pieces of the same type are aligned vertically, and two pieces of different types overlap vertically if they do not commute. This heap (of pieces) is just a labeled poset (sometimes called its \emph{skeleton}), whose Hasse diagram is shown on the right in Figure~\ref{fig:heap-example1}. Note that every ``labeled'' linear extension is a string that gives rise to the same heap. 

Trace monoids can be defined for arbitrary graphs, though the visualization of the heap in Figure~\ref{fig:heap-example1} works well because $\Gamma$ is a line-graph. For more complicated planar graphs, we might need to make the pieces oddly shaped for the ``Towers of Hanoi'' visualization to work, which originally motivated Viennot's heaps of pieces. For example, in Figure~\ref{fig:heap-example1} the piece $c$ does not commute with either $b$ or $d$. If we want to further require that it does not commute with $e$ and $f$, one way to represent this is to elongate it, as shown in Figure~\ref{fig:heap-example2}. The new dependency graph and Hasse diagram of the labeled poset are shown as well. 

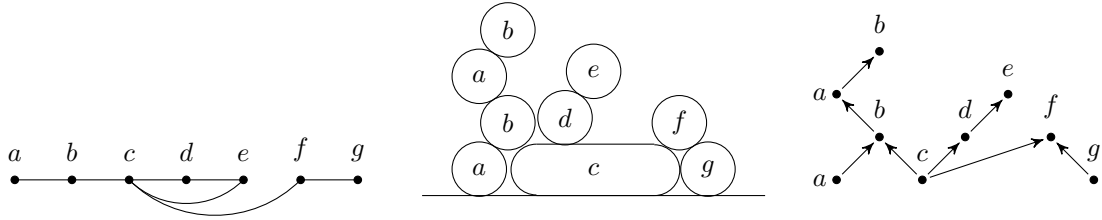
\begin{figure}[!ht]
  \centering
  \begin{tikzpicture}[scale=.95, >=stealth']
    \begin{scope}[shift={(6.5,0)}]
      \draw {(0,0.15) circle (0.15in) node[]{$a$}};
      \draw {(0,1.45) circle (0.15in) node[]{$a$}};
      \draw {(0.4,0.8) circle (0.15in) node[]{$b$}};
      \draw {(0.4,2.1) circle (0.15in) node[]{$b$}};
      \draw {(0.8,0.5) arc (90:270:0.14in)};
      \draw {(2.45,-0.22) arc (-90:90:0.14in)};
      \draw{[-] (0.8,-0.22)--(2.45,-0.22)};
      \draw{[-] (0.8,0.5)--(2.45,0.5)};
      \draw {(1.2,0.87) circle (0.15in)node[]{$d$} };
      \draw {(1.6,1.52) circle (0.15in) node[]{$e$}};
      \draw {(1.6,0.15) node{$c$}};
      \draw {(2.8,0.8) circle (0.15in) node[]{$f$}};
      \draw {(3.2,0.15) circle (0.15in) node[]{$g$}};
      \draw {[-] (-0.8,-.22)--(4,-.22)};
    \end{scope}
    \begin{scope}[shift={(11.5,0)}, shorten >= 3pt, shorten <= 3pt]
      \draw[fill=black] {(0,0) circle (1.5pt) node[label=left:$a\!\!$]{}};
      \draw[fill=black] {(0,1.2) circle (1.5pt) node[label=left:$a\!\!$]{}};
      \draw[fill=black] {(0.6,0.6) circle (1.5pt) node[label=above:$b$]{}};
      \draw[fill=black] {(0.6,1.8) circle (1.5pt) node[label=above:$b$]{}};
      \draw[fill=black] {(1.2,0) circle (1.5pt) node[label=above:$c$]{}};
      \draw[fill=black] {(1.8,0.6) circle (1.5pt) node[label=above:$d$]{}};
      \draw[fill=black] {(2.4,1.2) circle (1.5pt) node[label=above:$e$]{}};
      \draw[fill=black] {(3,0.6) circle (1.5pt) node[label=above:$f$]{}};
      \draw[fill=black] {(3.6,0) circle (1.5pt) node[label=above:$g$]{}};
      \draw[bigarrow] (0,0)--(0.6,0.6);
      \draw[bigarrow] (1.2,0)--(.6,.6); 
      \draw[bigarrow] (.6,.6)--(0,1.2); 
      \draw[bigarrow] (0,1.2)--(.6,1.8);
      \draw[bigarrow] (1.2,0) to (1.8,0.6);
      \draw[bigarrow] (1.8,0.6)--(2.4,1.2);
      \draw[bigarrow] (3.6,0) to (3,0.6);
      \draw[bigarrow2] (1.2,0) to (3,0.6);
    \end{scope}
    \begin{scope}[shift={(0,0)}]
      \draw[fill=black] {(0,0) circle (1.5pt) node[label=above:$a$]{}};
      \draw[fill=black] {(.8,0) circle (1.5pt) node[label=above:$b$]{}};
      \draw[fill=black] {(1.6,0) circle (1.5pt) node[label=above:$c$]{}};
      \draw[fill=black] {(2.4,0) circle (1.5pt) node[label=above:$d$]{}};
      \draw[fill=black] {(3.2,0) circle (1.5pt) node[label=above:$e$]{}};
      \draw[fill=black] {(4,0) circle (1.5pt) node[label=above:$f$]{}};
      \draw[fill=black] {(4.8,0) circle (1.5pt) node[label=above:$g$]{}};
      \draw {[-] (0,0)--(3.2,0) [-] (4,0)--(4.8,0) };
      \draw [-] (1.6,0) to [bend right=45] (3.2,0);
      \draw [-] (1.6,0) to [bend right=45] (4,0);
    \end{scope}
  \end{tikzpicture}
  \caption{This heap of pieces is created from the heap
    in Figure~\ref{fig:heap-example1} by additionally restricting
    $c$ from commuting with $e$ and $f$. This adds new edges to the dependency graph and new relations to the poset.}
  \label{fig:heap-example2}
\end{figure}
\end{ex}

For more complicated dependency graphs $\Gamma$, e.g., non-planar ones, the labeled poset arising from a trace monoid over $\Gamma$ does not have a nice visual realization in $2$- or $3$-dimensional space as a stack of pieces. However, there is still an underlying labeled poset which we can rigorously formalize as a heap. In Section~\ref{sec:heaps}, we will formally define all terms so there is no ambiguity about our notation. However, in the remainder of this section, we will assume a few basic definitions that the reader likely already knows, so we can summarize the outline, goals, and main ideas of this paper. 

Since Viennot introduced them in 1986, heaps have been defined in various ways, depending on the context, and usually with the ``of pieces'' dropped from the name. The following definition is due to R.M. Green \cite{green2010combinatorics}, who defined the category of heaps and applied it to Lie theory. Having a category makes definitions like a subheap and a morphism between heaps both natural and precise, and we will revisit this in Section~\ref{sec:heaps}.

\begin{defn}\label{defn:heap}
  A \emph{heap} is a triple $(P,\Gamma,\phi)$ consisting of a poset
  $P$, a graph $\Gamma$, and a function $\phi\colon P\to\Gamma$ to its
  vertex set, satisfying:
  \begin{enumerate}[(i)]
  \item For every vertex $s$ of $\Gamma$, the subset
    $\phi^{-1}(\{s\})$ is a chain in $P$, called a \emph{vertex
    chain}.
  \item For every edge $\{s,t\}$ of $\Gamma$, the subset
    $\phi^{-1}(\{s,t\})$ is a chain in $P$, called an \emph{edge
    chain}.
  \item If $P'$ is another poset over the same set satisfying (i) and (ii), then $P'$ is an extension of $P$.
  \end{enumerate}
\end{defn}

Heaps arise naturally in Coxeter theory, because every reduced word in a Coxeter group can be thought of as a labeled linear extension of a heap over the Coxeter graph $\Gamma$. This is best seen by an example, and the one that follows should be quite illustrative. It will be a running example that we will revisit throughout this paper. 

\begin{running}\label{run:b2}
  Consider the finite Coxeter group $W(B_2)$, whose Coxeter graph is shown in Figure~\ref{fig:b2} on the left.\footnote{Normally, the vertices of $\Gamma(B_2)$ are $s_0,s_1,s_2$, but we are using $s_1,s_2,s_3$, for consistency with the vertex sets of $\Gamma(A_3)$ and $\Gamma(H_3)$, which will appear in later examples.} In this Coxeter group, $s_1s_2s_1s_2=s_2s_1s_2s_1$, and so the element $w=s_3s_1s_2s_1s_2$ can also be written as $w=s_3s_2s_1s_2s_1$. Both of these reduced words gives rise to a heap, which are shown in Figure~\ref{fig:b2}. It is easy to see that these two heaps describe different words in the trace monoid $S^*$, where $S=\{s_1,s_2,s_3\}$. However, they represent the same group element in $W(B_2)$. 
\begin{figure}[!ht]
  \begin{tikzpicture}[shorten >= 3pt, shorten <= 3pt,>=stealth']
    \begin{scope}[scale=.8,shift={(-.5,-.5)}]
      \draw[fill=black] {(0,0) circle (1.5pt) node[label=below:$s_1$]{}};
      \draw[fill=black] {(1.25,0) circle (1.5pt) node[label=below:$s_2$]{}};
      \draw[fill=black] {(2.5,0) circle (1.5pt) node[label=below:$s_3$]{}};
      \draw {(0.625,0) node[label=above:$4$]{}};
      \draw {[-] (0,0)--(1.25,0) [-] }; \draw {[-] (1.25,0)--(2.5,0) [-] };
    \end{scope}
    \begin{scope}[scale=.8,shift={(4,-.5)}]
      \draw {[-] (-.5,0)--(1.5,0) [-] };
      \draw {(0,0.4) circle (0.15in) node[]{$s_1$}};
      \draw {(.4,1.05) circle (0.15in) node[]{$s_2$}};
      \draw {(0,1.7) circle (0.15in) node[]{$s_1$}};
      \draw {(.4,2.35) circle (0.15in) node[]{$s_2$}};
      \draw {(.8,0.4) circle (0.15in) node[]{$s_3$}};
    \end{scope}
    \begin{scope}[scale=.8,shift={(7,0)}]
      \draw[fill=black] {(0,0) circle (1.5pt) node[label=left:$s_1$]{}};
      \draw[fill=black] {(0,1.2) circle (1.5pt) node[label=left:$s_1$]{}};
      \draw[fill=black] {(.6,0.6) circle (1.5pt) node[label=right:$s_2$]{}};
      \draw[fill=black] {(.6,1.8) circle (1.5pt) node[label=right:$s_2$]{}};
      \draw[fill=black] {(1.2,0) circle (1.5pt) node[label=right:$s_3$]{}};
      \draw[bigarrow] (0,0) -- (.6,.6); \draw[bigarrow] (.6,.6) -- (0,1.2);
      \draw[bigarrow] (0,1.2) -- (.6,1.8); \draw[bigarrow] (1.2,0) -- (.6,.6);
    \end{scope}
    \begin{scope}[scale=.8,shift={(11,-.5)}]
      \draw {[-] (-.5,0)--(1.5,0) [-] };
      \draw {(0,3.0) circle (0.15in) node[]{$s_1$}};
      \draw {(.4,1.05) circle (0.15in) node[]{$s_2$}};
      \draw {(0,1.7) circle (0.15in) node[]{$s_1$}};
      \draw {(.4,2.35) circle (0.15in) node[]{$s_2$}};
      \draw {(.8,0.4) circle (0.15in) node[]{$s_3$}};
    \end{scope}
    \begin{scope}[scale=.8,shift={(14,0)},>=stealth']
      \draw[fill=black] {(0,1.2) circle (1.5pt) node[label=left:$s_1$]{}};
      \draw[fill=black] {(.6,0.6) circle (1.5pt) node[label=right:$s_2$]{}};
      \draw[fill=black] {(.6,1.8) circle (1.5pt) node[label=right:$s_2$]{}};
      \draw[fill=black] {(1.2,0) circle (1.5pt) node[label=right:$s_3$]{}};
      \draw[fill=black] {(0,2.4) circle (1.5pt) node[label=left:$s_1$]{}};
      \draw[bigarrow] (.6,1.8) -- (0,2.4); \draw[bigarrow] (.6,.6) -- (0,1.2);
      \draw[bigarrow] (0,1.2) -- (.6,1.8); \draw[bigarrow] (1.2,0) -- (.6,.6);
    \end{scope}
  \end{tikzpicture}
  \caption{The element $w=s_3s_1s_2s_1s_2=s_3s_2s_1s_2s_1$ in the Coxeter group
    $W(B_2)$ has two heaps, one for each commutativity class.}\label{fig:b2}
\end{figure}
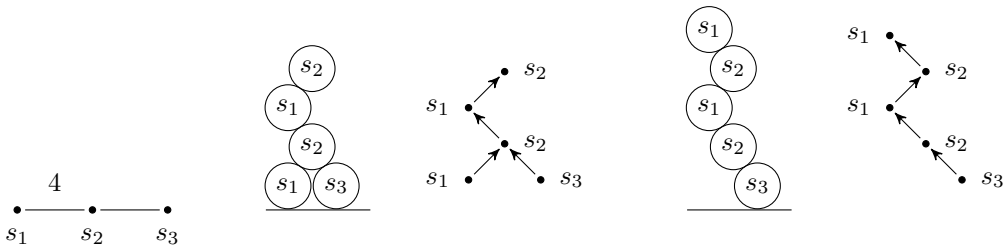
\end{running}

Heaps generally do not provide a ``magic bullet'' for proving theorems in Coxeter theory or elsewhere, but they are often quite useful. They have been applied to a variety of topics in pure and applied mathematics, physics, computer science, and engineering. Examples include fully commutative \cite{stembridge1996fully,stembridge1998enumeration} and freely braided \cite{green2002freely} elements in Coxeter groups, Kazhdan-Lusztig polynomials \cite{billey2001kazhdan}, representations of Kac-Moody \cite{green2007full} and Lie algebras \cite{wildberger2003combinatorial}, $Q$-system cluster algebras \cite{di2010q-systems}, parallelogram polyominoes \cite{bousquet1992empilements}, $q$-analogues of Bessel functions \cite{fedou1995fonctions}, Lyndon words \cite{lalonde1995lyndon}, lattice animals, \cite{bousquet2002lattice}, Motzkin path models for polymers \cite{brak2007motzkin}, Lorentzian quantum gravity \cite{viennot2014heaps}, modeling with Petri nets \cite{gaubert1999modeling}, control theory of discrete-event systems \cite{su2012synthesis}, and many more.

Returning to Coxeter groups, heaps provide a framework for \emph{reducibility}: commutativity classes of elements correspond to heaps, and reduced words to labeled linear extensions. The goal of this paper is to develop and study a cyclic version of a heap. This was originally motivated by a need for a framework of \emph{cyclic reducibility} in Coxeter groups, though we expect that this structure will appear in other settings in combinatorics and beyond. Cyclic reducibility in Coxeter groups is closely related to the conjugacy problem, but it is also interesting in its own right. To motivate this connection, note that conjugating a reduced word $s_{i_1}\cdots s_{i_k}$ by the initial generator $s_{i_1}=s_{i_1}^{-1}$ cyclically shifts it, e.g.,
\begin{equation}\label{eqn:cyclic-shift}
  s_{i_1}(s_{i_1}s_{i_2}\cdots s_{i_k})s_{i_1}=s_{i_2}\cdots s_{i_k}s_{i_1}.
\end{equation}
Loosely speaking, one can think of our cyclic version of a heap as the result of identifying (or gluing) the top with the bottom of the diagrams in Figures~\ref{fig:heap-example1}, \ref{fig:heap-example2} and \ref{fig:b2}, so that the ``heap of pieces'' is not a vertical stack, but rather a cylinder. For simple examples, such as the ones already given, this concept is visually clear. However, it is much less clear to how to formalize this mathematically and what the underlying structure should be, especially for general dependency graphs. 

The answer to this involves a fairly new concept of a \emph{toric poset}, introduced by Develin, Macauley, and Reiner in 2016 \cite{develin2016toric}. A toric poset is a cyclic version of an ordinary poset, that is generated by the equivalence under making minimal elements maximal, in the sense of Eq.~\eqref{eqn:cyclic-shift} above. Many fundamental features of posets have very natural cyclic, or ``toric'' analogues. For example, a chain in a poset is a totally ordered set, but a toric chain in a toric poset represents a totally cyclically ordered set. An extension of a poset is defined by adding relations. The toric counterpart to this concept is called a toric extension, but in order to see how these are analogous, one has to view things geometrically, and that is where the ``toric'' name comes from. A finite poset $P$ can be viewed as an acyclic directed graph (but not uniquely), and that can be associated with a chamber $c(P)$ of a graphic hyperplane arrangement $\AAA(G)$ in $\R^n$. Though $G$ and hence $\AAA(G)$ are not uniquely determined by the poset, the particular chamber $c(P)\subseteq\R^n$ is. The geometric interpretation of the equivalence generated by making minimal elements maximal is quotienting out by the integer lattice $\Z^n$. The result is a (toric) hyperplane arrangement $\AAA_{\tor}(G)$ in the $n$-torus $\R^n\!/\Z^n$. The chambers of $\AAA_{\tor}(G)$ are in bijection with the acyclic orientations of $G$ under the equivalence of converting sources into sinks, and these are called \emph{toric posets} over $G$. Now, back to extensions: an extension of a poset can be described geometrically as adding hyperplanes to the arrangement $\AAA(G)$, and a (toric) extension of a toric poset corresponds to adding (toric) hyperplanes to $\AAA_{\tor}(G)$. There are also natural toric analogues of linear extensions, transitivity, Hasse diagrams, intervals, antichains, order ideals, morphisms, and $P$-partitions, among others. The ones relevant to toric heaps will be discussed later when we formalize them in Section~\ref{subsec:toric-posets}. More details about these and others can be found in \cite{develin2016toric,macauley2016morphisms,adin2018cyclic}. 

The formal definition of a toric poset can be found in Definition / Theorem \ref{defn-thm:toric-poset}. However, now that we have conveyed the intuitive idea of it, we can give the formal definition of a toric heap. It should be thought of as a labeled toric poset -- a cyclic version of an ordinary heap.

\begin{defn}\label{defn:toric-heap}
  A \emph{toric heap} is a triple $(T,\Gamma,\tau)$ consisting of a
  toric poset $T$, a graph $\Gamma$, and a function $\tau\colon
  T\to\Gamma$ to its vertex set, satisfying:
  \begin{enumerate}[(i)]
  \item For every vertex $s$ of $\Gamma$, the subset
    $\tau^{-1}(\{s\})$ is a toric chain in $T$, called a \emph{toric vertex
    chain}.
  \item For every edge $\{s,t\}$ of $\Gamma$, the subset
    $\tau^{-1}(\{s,t\})$ is a toric chain in $T$, called a \emph{toric edge
    chain}.
  \item If $T'$ is another toric poset over the same set satisfying (i) and (ii), then $T'$ is a toric extension of $T$.
  \end{enumerate}
\end{defn}

The remainder of this paper is organized as follows. In Section~\ref{sec:coxeter}, we discuss posets over graphs, and review relevant concepts in Coxeter theory. We use these concepts as motivating examples in Section~\ref{sec:heaps}, where we further study heaps over graphs and the resulting categories. In Section~\ref{sec:coxeter-fc-cfc}, we show how conjugation of Coxeter elements can be described by a source-to-sink operation on acyclic orientations. Generalizing this construction leads to the fully commutative (FC) and cyclically fully commutative (CFC) elements, which illustrate the need for a framework of cyclic reducibility in Coxeter theory. In Section~\ref{sec:toric}, we review the concept of a toric poset, both geometrically as a chamber of a graphic toric hyperplane arrangement, and combinatorially as an equivalence class of acyclic orientations. This allows us to finally define toric heaps formally, and we look at the resulting categories. In Section~\ref{sec:cyclic-reducibility}, we formalize cyclic reducibility in Coxeter groups. We start with cyclic words, and distinguish between words and group elements being cyclically reduced and torically reduced. This leads us to the concept of cyclic commutativity classes. Next, we define the toric heap of a torically reduced word in any Coxeter group. In Section~\ref{sec:faux-cfc}, we analyze new classes of elements that arise in this paper that are called \emph{torically fully commutative (TFC)} and \emph{faux CFC}. The former are the elements that have only one cyclic commutativity class, and the latter are those that additionally admit long braid relations. Finally, in Section~\ref{sec:conjugacy}, we discuss recent work on reducibility, cyclic reducibility, and conjugacy using the toric heap framework. A few beautiful results by T.~Marquis in \cite{marquis2014conjugacy} are inaccurate as stated, but easily corrected by replacing ``cyclically reduced'' with ``torically reduced.'' We conclude in Section~\ref{sec:conclusions} with some open problems and directions for future research.

\section{Combinatorial Coxeter Theory}\label{sec:coxeter}

\noindent Though the theory of heaps can be developed independently, Coxeter groups provide a wealth of useful and motivating examples, and so we will introduce them right away. More information can be found in classic texts such as Humphreys \cite{humphreys1990reflection} or Bj\"orner and Brenti  \cite{bjorner2005combinatorics}. In this section, we will begin by defining posets over graphs, and then show how they arise in Coxeter theory. That will naturally lead us into heaps, which will be done in Section~\ref{sec:heaps}.

\subsection{Posets over graphs}\label{subsec:posets}

Throughout this paper, $G=(V,E)$ will be an undirected graph without loops, $P$ a nonempty finite set, and $\leq_P$ a binary relation that is reflexive, antisymmetric, and transitive. The pair $(P,\leq_P)$ is a \emph{partially ordered set}, or \emph{poset}. Usually we will write $P$ instead of $(P,\leq_P)$, as the relation is generally understood. 

An \emph{acyclic orientation} $\omega$ of $G$ determines a partial ordering on $V$, where $i\leq_P j$ if and only if there is an $\omega$-directed path from $i$ to $j$. We denote this poset by $P=P(G,\omega)$, and say that it is a \emph{poset on $G$}. Let $\Acyc(G)$ be the set of all acyclic orientations of $G$. It should be noted that a (finite) poset does not uniquely determine a graph. However, given a poset $P$, there is a unique minimal graph $\hat{G}^{\Hasse}(P)$ with respect to edge-inclusion, called the \emph{Hasse diagram}, and a unique maximal graph $\bar{G}(P)$, called the \emph{transitive closure}, from which $P$ arises as an acyclic orientation. A simple example of this is shown in Figure~\ref{fig:4-digraphs}, where acyclic orientations of four different graphs (the undirected versions of those shown) all describe the same $5$-element poset.

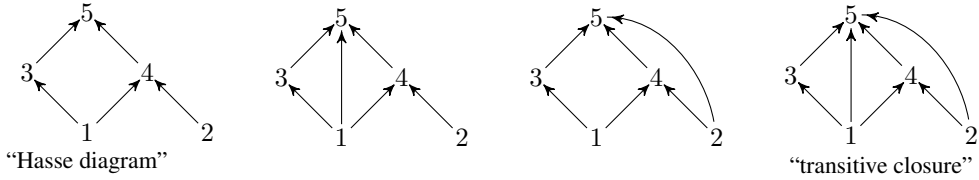
\begin{figure}[!ht]
  \begin{tikzpicture}[scale=0.4, >=stealth', shorten >= 4pt, shorten <= 6pt] 
    { \draw (2,0) node{$1$}; 
      \draw (6,0) node{$2$};
      \draw (0,2) node{$3$}; 
      \draw (4,2) node{$4$};
      \draw (2,4) node{$5$}; 
      \draw[ba2] (2,0) -- (0,2);
      \draw[ba2] (2,0) -- (4,2);
      \draw[ba2] (6,0) -- (4,2);
      \draw[ba2] (0,2) -- (2,4);
      \draw[ba2] (4,2) -- (2,4); }
    \draw (2,-1) node{{\small ``Hasse diagram''}};
  \end{tikzpicture} \hspace{4mm}
  \begin{tikzpicture}[scale=0.4, >=stealth', shorten >= 4pt, shorten <= 6pt] 
    { \draw (2,0) node{$1$}; 
      \draw (6,0) node{$2$};
      \draw (0,2) node{$3$}; 
      \draw (4,2) node{$4$};
      \draw (2,4) node{$5$}; 
      \draw[ba2] (2,0) -- (0,2);
      \draw[ba2] (2,0) -- (4,2);
      \draw[ba2] (6,0) -- (4,2);
      \draw[ba2] (0,2) -- (2,4);
      \draw[ba2] (4,2) -- (2,4); 
      \draw[ba2] (2,0) -- (2,4);
      \draw (2,-1.2) node{\;\;\;};}
  \end{tikzpicture} \hspace{4mm}
  \begin{tikzpicture}[scale=0.4, >=stealth', shorten >= 4pt, shorten <= 6pt] 
    { \draw (2,0) node{$1$}; 
      \draw (6,0) node{$2$};
      \draw (0,2) node{$3$}; 
      \draw (4,2) node{$4$};
      \draw (2,4) node{$5$}; 
      \draw[ba2] (2,0) -- (0,2);
      \draw[ba2] (2,0) -- (4,2);
      \draw[ba2] (6,0) -- (4,2);
      \draw[ba2] (0,2) -- (2,4);
      \draw[ba2] (4,2) -- (2,4); 
      \draw (6,0) edge[out=100,in=0] (2,4);
      \draw[bigarrow] (2.6,3.98) -- (2.5,4); 
      \draw (2,-1.2) node{\;\;\;};}
  \end{tikzpicture} \hspace{4mm}
  \begin{tikzpicture}[scale=0.4, >=stealth', shorten >= 4pt, shorten <= 6pt] 
    { \draw (2,0) node{$1$}; 
      \draw (6,0) node{$2$};
      \draw (0,2) node{$3$}; 
      \draw (4,2) node{$4$};
      \draw (2,4) node{$5$}; 
      \draw[ba2] (2,0) -- (0,2);
      \draw[ba2] (2,0) -- (4,2);
      \draw[ba2] (6,0) -- (4,2);
      \draw[ba2] (0,2) -- (2,4);
      \draw[ba2] (4,2) -- (2,4);
      \draw[ba2] (2,0) -- (2,4); 
      \draw (6,0) edge[out=100,in=0] (2,4);
      \draw[bigarrow] (2.6,3.98) -- (2.5,4); 
      \draw (3,-1) node{{\small ``transitive closure''}};
    }
  \end{tikzpicture} \caption{Four acyclic directed graphs that describe the same $5$-element poset.}\label{fig:4-digraphs}
\end{figure}

If for every $x\neq y$ in $P$, either $x\leq_P y$ or $y\leq_P x$ holds, then $\leq_P$ is a \emph{total order}, and $(P,\leq_P)$ is a \emph{totally ordered set}. Naturally, we write $x<_P y$ if $x\leq_P y$ and $x\neq y$. A totally ordered subset of a poset is called a \emph{chain}. 

\subsection{Coxeter Groups}\label{subsec:coxeter}

A rank-$n$ \emph{Coxeter system} is a pair $(W,S)$ consisting of a set $S=\{s_1,\dots,s_n\}$ that generates a \emph{Coxeter group} $W$ by the presentation
\[
W=\<s_1,\dots,s_n\mid (s_is_j)^{m_{i,j}}=1\>.
\]
Each \emph{bond strength} $m_{i,j}:=m(s_i,s_j)=1$ if and only if
$s_i=s_j$, and $m_{i,j}$ is precisely the order\footnote{For ease of notation, we allow $m_{i,j}=\infty$, and say that $w^\infty:=1$ for any $w\in W$.} of $s_is_j$. Distinct generators $s_i,s_j$ commute if and only if $m(s_i,s_j)=2$. A Coxeter system has a \emph{Coxeter graph} $\Gamma$ which has vertex set $\{1,\dots,n\}$ (or alternatively, $\{s_1,\dots,s_n\}$) and an edge $\{i,j\}$ with label $m(s_i,s_j)$ for each noncommuting pair of generators. Labels of $3$ are usually omitted because they are the most common. A Coxeter system $(W,S)$ is \emph{irreducible} if $\Gamma$ is connected. 

If a word $\w=s_{x_1}\cdots s_{x_m}\in S^*$ is equal to $w$ when considered as an element of $W$, we say that it is a \emph{word} or \emph{expression} for $w$. If furthermore, $m$ is minimal, we call it a \emph{reduced word} for $w$, and we call $m$ its \emph{length}, denoted $\ell(w)$. Let $\RRR(w)$ be the set of reduced words for $w\in W$ and $\RRR(W,S^*)$ be the set of all reduced words. We typically write words using {\sf san serif} font, though it is common to speak of a word $\w\in S^*$ as also being a group element $w\in W$.

For each integer $m\geq 2$ and distinct generators $s,t\in S$, define
\[
\<s,t\>_m=\underbrace{stst\cdots}_{m}\in S^*.
\]
A relation of the form $\<s,t\>_{m(s,t)}=\<t,s\>_{m(s,t)}$ is a
\emph{braid relation}, and a \emph{short braid relation}\footnote{Some authors call $\<s,t\>_{m(s,t)}=\<t,s\>_{m(s,t)}$ a short braid relation if $m(s,t)=3$, and a commutation relation if $m(s,t)=2$.} if $m(s,t)=2$. The braid relations generate an equivalence on $S^*$, denoted $\approx$. A classic theorem of Matsumoto~\cite{matsumoto1964generateurs} says that the resulting equivalence classes are in bijection with the elements of $W$.

\begin{thm}[Matsumoto]
  Any two reduced words for $w\in W$ differ only by braid relations.
\end{thm}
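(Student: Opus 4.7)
The plan is to proceed by strong induction on $\ell(w)$. The base cases $\ell(w)\in\{0,1\}$ are immediate, as such elements admit a unique reduced expression. For the inductive step, let $\u=s_{x_1}\cdots s_{x_m}$ and $\v=s_{y_1}\cdots s_{y_m}$ be two reduced words for $w$ with $m\geq 2$. If $s_{x_1}=s_{y_1}$, then $s_{x_2}\cdots s_{x_m}$ and $s_{y_2}\cdots s_{y_m}$ are reduced words for an element of length $m-1$, to which the inductive hypothesis applies; prepending the shared first letter yields $\u\approx\v$.

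The essential case is when $s:=s_{x_1}\neq s_{y_1}=:t$, so both $s$ and $t$ are left descents of $w$. The argument then hinges on the following \emph{dihedral lemma}: if $s\neq t$ are generators with $m:=m(s,t)<\infty$ and both lie in the left descent set of $w$, then $w$ has a reduced expression beginning with $\langle s,t\rangle_m$. Granting this, choose reduced words $\u'\in\RRR(w)$ starting with $\langle s,t\rangle_m$ and $\v'\in\RRR(w)$ starting with $\langle t,s\rangle_m$. Since $\u$ and $\u'$ share a first letter, the same-letter case of the induction gives $\u\approx\u'$, and symmetrically $\v\approx\v'$. Writing $\u'=\langle s,t\rangle_m\cdot\u''$ and $\v'=\langle t,s\rangle_m\cdot\v''$, the tails $\u''$ and $\v''$ are reduced words for the common shorter element $\langle s,t\rangle_m^{-1}w$, so $\u''\approx\v''$ by induction. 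A single application of the braid relation $\langle s,t\rangle_m=\langle t,s\rangle_m$ then yields $\u'\approx\v'$, and chaining the equivalences completes the step.

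The main obstacle is the dihedral lemma, whose natural route is via the Exchange Condition, itself a consequence of the geometric (reflection) representation of $(W,S)$. Concretely, one realizes $W$ as a group of linear transformations on a real vector space spanned by simple roots $\{\alpha_s : s\in S\}$, equipped with the symmetric bilinear form determined by $(\alpha_s,\alpha_t)=-\cos(\pi/m(s,t))$; each $s\in S$ acts as a reflection, and $\ell(w)$ equals the number of positive roots $\alpha$ for which $w\alpha$ is negative. The Exchange Condition — that if $\v=s_{y_1}\cdots s_{y_m}$ is reduced and $\ell(sw)<\ell(w)$, then $sw=s_{y_1}\cdots\widehat{s_{y_j}}\cdots s_{y_m}$ for some $j$ — follows by tracking which reflection first sends $\alpha_s$ to a negative root. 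To derive the dihedral lemma, one applies the Exchange Condition alternately on the left by $s$ and $t$: each application strictly reduces length while pulling an $s$ or $t$ to the front, and because the parabolic subgroup $\langle s,t\rangle$ is dihedral of order $2m$, the process cannot terminate until an alternating prefix of length exactly $m$ has been produced, which is the desired reduced expression for $w$.
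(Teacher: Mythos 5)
The paper does not prove this statement at all: it is quoted as a classical result and attributed to Matsumoto's 1964 paper, so there is no in-paper argument to compare against. Judged on its own, your proof is the standard Tits/Matsumoto argument (the one found in, e.g., Bj\"orner--Brenti or Geck--Pfeiffer): induct on $\ell(w)$, dispose of the case of a common first letter by stripping it, and in the case of distinct left descents $s\neq t$ reduce to the dihedral lemma that $w$ admits a reduced expression beginning with $\langle s,t\rangle_{m(s,t)}$. The inductive bookkeeping is sound --- in particular there is no circularity in invoking the ``same first letter'' case for $\u\approx\u'$ and $\v\approx\v'$, since that case only calls the hypothesis at length $m-1$, and the tails $\u'',\v''$ have length $m-m(s,t)<m$. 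The one place where the argument is genuinely sketchy is the dihedral lemma itself: the phrase ``each application strictly reduces length while pulling an $s$ or $t$ to the front'' glosses over why the alternating prefix can always be extended until it reaches length exactly $m(s,t)$, and why $m(s,t)<\infty$ is automatic when both $s$ and $t$ are left descents. The clean way to close this is the parabolic coset decomposition: writing $w=vu$ with $u$ the minimal-length element of $W_{\{s,t\}}w$ and $v\in W_{\{s,t\}}$ with $\ell(w)=\ell(v)+\ell(u)$, both $s$ and $t$ are left descents of $v$ inside the dihedral group, which forces $W_{\{s,t\}}$ to be finite and $v$ to be its longest element $\langle s,t\rangle_{m(s,t)}$. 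With that (or an equally careful iterated-exchange argument) supplied, your proof is complete and is the accepted modern route to the theorem the paper cites.
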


By Matsumoto's theorem, it is well-defined to let the \emph{support}
of an element $w \in W$, denoted $\supp(w)$, be the set of all
generators appearing in any reduced word for $w$. If
$\supp(w)=S$, then we say that $w$ has \emph{full support}. 

The short braid relations generate an equivalence relation $\sim$
on $S^*$ that is coarser than $\approx$. The resulting equivalence classes are called \emph{commutativity classes}. Clearly, the reduced words of any $w\in W$ are a disjoint union of commutativity classes, i.e.,
\[
\RRR(w)=\mathcal{C}(\w_1)\cup\cdots\cup\mathcal{C}(\w_k)
\]
for some reduced words $\w_1,\dots,\w_k$, and where $\mathcal{C}(\w_i)$ is the commutativity class that contains $\w_i$. 

\begin{defn}
  An element $w\in W$ is \emph{fully commutative} (FC) if $\RRR(w)$ contains only one commutativity class. Let $\FC(W,S)$ denote the set of fully commutative elements of $W$.
\end{defn}

The classification of finite and affine Coxeter groups is well known, and it consists of several infinite families and some exceptional cases \cite{bjorner2005combinatorics}. We will denote these groups by e.g., $W(A_n)$, $W(\widetilde{B}_n)$, and their Coxeter graphs by, e.g., $\Gamma(A_n)$, $\Gamma(\widetilde{B}_n)$, etc.

\begin{runningcont}
  Consider the word $\w=s_3s_1s_2s_1s_2$ as an element of three different Coxeter groups, one for each of the Coxeter graphs shown below. Recall that we are deliberately using $\{s_1,s_2,s_3\}$ instead of the usual $\{s_0,s_1,s_2\}$ as the generating set of $W(B_2)$ so that all three Coxeter graphs have the same vertex sets. 
  \[
  \begin{tikzpicture}[scale=.9, >=stealth', shorten >= 1pt, shorten <= 1pt]
    \begin{scope}
      \draw {(-.36,0) node[label=left:$\Gamma(A_3):$]{}};
      \draw[fill=black] {(0,0) circle (1.5pt) node[label=below:$s_1$]{}};
      \draw[fill=black] {(1.25,0) circle (1.5pt) node[label=below:$s_2$]{}};
      \draw[fill=black] {(2.5,0) circle (1.5pt) node[label=below:$s_3$]{}};
      \draw {[-] (0,0)--(1.25,0) [-] }; \draw {[-] (1.25,0)--(2.5,0) [-] };
    \end{scope}
  \end{tikzpicture}
  \hspace{10mm}
  \begin{tikzpicture}[scale=.9, >=stealth', shorten >= 1pt, shorten <= 1pt]
    \begin{scope}
      \draw {(-.36,0) node[label=left:$\Gamma(B_2):$]{}};
      \draw[fill=black] {(0,0) circle (1.5pt) node[label=below:$s_1$]{}};
      \draw[fill=black] {(1.25,0) circle (1.5pt) node[label=below:$s_2$]{}};
      \draw[fill=black] {(2.5,0) circle (1.5pt) node[label=below:$s_3$]{}};
      \draw {(0.625,0) node[label=above:$4$]{}};
      \draw {[-] (0,0)--(1.25,0) [-] }; \draw {[-] (1.25,0)--(2.5,0) [-] };
    \end{scope}
  \end{tikzpicture}
  \hspace{10mm}
  \begin{tikzpicture}[scale=.9, >=stealth', shorten >= 1pt, shorten <= 1pt]
    \begin{scope}
      \draw {(-.36,0) node[label=left:$\Gamma(H_3):$]{}};
      \draw[fill=black] {(0,0) circle (1.5pt) node[label=below:$s_1$]{}};
      \draw[fill=black] {(1.25,0) circle (1.5pt) node[label=below:$s_2$]{}};
      \draw[fill=black] {(2.5,0) circle (1.5pt) node[label=below:$s_3$]{}};
      \draw {(0.625,0) node[label=above:$5$]{}};
      \draw {[-] (0,0)--(1.25,0) [-] }; \draw {[-] (1.25,0)--(2.5,0) [-] };
    \end{scope}
\end{tikzpicture}
\]
The word $\w=s_3s_1s_2s_1s_2$ is not reduced in $W(A_3)$ because
$s_3(s_1s_2s_1)s_2=s_3(s_2s_1s_2)s_2=s_3s_2s_1$. It is reduced in $W(B_2)$ but not FC, because $w=s_3(s_1s_2s_1s_2)=s_3(s_2s_1s_2s_1)$. The partition of the reduced words into commutativity classes is
\[
\RRR(w)=\{s_3s_1s_2s_1s_2,\,s_1s_3s_2s_1s_2\}\cup\{s_3s_2s_1s_2s_1\}.
\]
Finally, the word $\w$ is reduced in $W(H_3)$ and the corresponding group element $w$ has a unique commutativity class, $\RRR(w)=\{s_3s_1s_2s_1s_2,s_1s_3s_2s_1s_2\}$, so it is FC.
\end{runningcont}

\section{Labeled posets and heaps}\label{sec:heaps}

Recall from Definition~\ref{defn:heap} that a heap is a triple $(P,\Gamma,\phi)$, where $\phi\colon P\to\Gamma$ is a map from a poset to a graph. We will call $P$ the \emph{heap poset}, $\Gamma$ the \emph{heap graph}, and $\phi$ the \emph{labeling map}. Recall that the partial order on $P$ is minimal (coarsest) such that the preimage $\phi^{-1}(s)$ of each vertex and the preimage $\phi^{-1}(\{s,t\})$ of each edge in $\Gamma$ are chains. However, when defining the heap from a concrete object, such as a reduced word in a Coxeter group, it is also necessary to specify the relative order of the elements within each of these chains. We will do this with an acyclic orientation.

\begin{defn} \label{defn:w-graph}
Given a word $\w=s_{x_1}\cdots s_{x_m}$ in $S^*$, consider the graph $G_\w=(V,E)$, where $V=[m]$ and $E$ is the set of all $\{i,j\}$ for which $i\neq j$ and $m(s_{x_i},s_{x_j})\neq 2$. Let $\omega_\w$ be the orientation where each edge $\{i,j\}$ is oriented as $i\to j$ if $i<j$ and $j\to i$ otherwise. Define the poset $P_\w=P(G_\w,\omega_\w)$. 
\end{defn}

\begin{defn} \label{defn:heap-coxeter}
  Fix a Coxeter system $(W,S)$, and let $\w=s_{x_1}\cdots s_{x_m}$ be a word in $S^*$. Define the labeling map 
  \[
  \phi_\w\colon P_\w\longto\Gamma\,,\qquad \phi_\w(i)=s_{x_i}.
  \]
  The triple $\HHH(\w):=(P_\w,\Gamma,\phi_\w)$ is called the \emph{heap of $\w$}. If $\w\in\RRR(w)$, then we say it is a heap of the group element $w\in W$. 
\end{defn}

By construction, distinct words in $S^*$ give rise to distinct heaps, even if they are in the same commutativity class. For example, consider the words $\w=s_1s_3s_2$ and $\w'=s_3s_1s_2$ in $W(A_3)$. Even though $\w\sim\w'$, the heaps $\HHH(\w)$ and $\HHH(\w')$ are different. We would like to say that they are ``the same,'' and we can do this using the concept of a heap isomorphism from \cite{green2010combinatorics}. Let $\HEAP$ be the category of heaps, where morphisms are defined below. 

\begin{defn} \label{defn:heap-morphism}
  A \emph{morphism} from one heap $(P,\Gamma,\phi)$ to another $(P',\Gamma',\phi')$ is a pair $(\sigma,\gamma)$, where $\sigma\colon P\to P'$ is a poset morphism and $\gamma\colon\Gamma\to\Gamma'$ is a graph homomorphism, satisfying $\gamma\circ\phi=\phi'\circ\sigma$:
  \[
  \xymatrix{ P \ar[rr]^\phi \ar[d]_\sigma && \Gamma \ar[d]^\gamma \\ P'\ar[rr]_{\phi'} && \Gamma'}
  \]
\end{defn}

If $(\sigma,\gamma)$ is a heap morphism with $\gamma$ being the identity map on $\Gamma=\Gamma'$, and $\sigma$ is injective, then $(P',\Gamma',\phi')$ is a \emph{subheap} of $(P,\Gamma,\phi)$. If $\sigma$ and $\gamma$ are both bijective, then the two heaps are \emph{isomorphic}. If we want to only consider heaps over a fixed graph $\Gamma$, which is often the case, we can define $\HEAP(\Gamma)$. We will mostly refrain from the category theory point of view in this paper, because the focus is more on Coxeter theory. A thorough categorical treatment of heaps and toric heaps will be done in a forthcoming paper. However, in order to speak about morphisms in Coxeter theory where $\Gamma\neq\Gamma'$, we would need to be clear on how to define a homomorphism between Coxeter graphs, especially regarding edge weights. We will not do that here because we will not be using it.

\begin{prop}\label{prop:isomorphic-heaps}
 If $\w\sim\w'$ are reduced words in $(W,S)$, then the heaps $\HHH(\w)$ and $\HHH(\w')$ are isomorphic. 
\end{prop}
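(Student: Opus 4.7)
The plan is to first reduce to the atomic case of a single commutation move, then exhibit an explicit isomorphism. Since $\sim$ is, by definition, the equivalence relation generated by short braid relations, any $\w\sim\w'$ can be connected by a finite sequence $\w=\w^{(0)},\w^{(1)},\ldots,\w^{(N)}=\w'$ in which consecutive words differ by swapping two adjacent commuting letters. Because isomorphism of heaps is transitive (composition of heap morphisms is a heap morphism, and bijectivity is preserved), it suffices to prove the claim when $\w'$ is obtained from $\w$ by a single such swap. Write $\w=s_{x_1}\cdots s_{x_m}$ and suppose the commutation occurs at positions $k,k+1$, so $m(s_{x_k},s_{x_{k+1}})=2$ and $\w'=s_{y_1}\cdots s_{y_m}$ with $y_i=x_i$ for $i\notin\{k,k+1\}$, $y_k=x_{k+1}$, and $y_{k+1}=x_k$.

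Next, I would exhibit the isomorphism. Take $\gamma=\id_\Gamma$ and let $\sigma\colon[m]\to[m]$ be the transposition $(k\;k{+}1)$. The commuting square $\gamma\circ\phi_\w=\phi_{\w'}\circ\sigma$ is immediate: for $i\notin\{k,k+1\}$ both sides equal $s_{x_i}$, while $\phi_{\w'}(\sigma(k))=\phi_{\w'}(k{+}1)=s_{y_{k+1}}=s_{x_k}=\phi_\w(k)$, and symmetrically for $i=k{+}1$.

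Then I would verify that $\sigma$ is a poset isomorphism $P_\w\to P_{\w'}$ by showing that it induces an isomorphism of the directed graphs $(G_\w,\omega_\w)\to(G_{\w'},\omega_{\w'})$, from which the poset isomorphism follows by taking transitive closures of directed paths. Two things must be checked: that $\sigma$ is a bijection on edge sets, and that it preserves orientations. For edges $\{i,j\}$ with $i,j\notin\{k,k+1\}$ both are trivial. For an edge $\{i,p\}$ of $G_\w$ with $p\in\{k,k+1\}$ and $i\notin\{k,k+1\}$, the label-preservation identity $\phi_\w=\phi_{\w'}\circ\sigma$ forces $\{i,\sigma(p)\}$ to be an edge of $G_{\w'}$ (and conversely), giving the edge-set bijection. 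For orientations, the only way $\sigma$ could reverse an edge is if swapping $k$ and $k+1$ flipped the order of the two endpoints of an edge: this can only happen for the pair $\{k,k+1\}$ itself, but by hypothesis $m(s_{x_k},s_{x_{k+1}})=2$, so $\{k,k+1\}$ is not an edge of $G_\w$ (nor of $G_{\w'}$). For any other edge involving $k$ or $k+1$, the other endpoint $i$ satisfies $i<k<k+1$ or $k<k+1<i$, and in either case $\sigma$ preserves the relative order of the two endpoints.

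The only subtle point—and the main obstacle that must be addressed carefully—is the orientation check in the last paragraph: one must see that the hypothesis $m(s_{x_k},s_{x_{k+1}})=2$ is exactly what rules out the single edge whose orientation could otherwise be reversed by $\sigma$. Once this observation is in place, $\sigma$ is a bijection of directed graphs, hence a poset isomorphism, and together with $\gamma=\id_\Gamma$ it gives an isomorphism $\HHH(\w)\to\HHH(\w')$ in $\HEAP$. Composing these atomic isomorphisms along the chain of commutation moves yields the desired isomorphism in the general case.
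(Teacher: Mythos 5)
Your proposal is correct and follows essentially the same route as the paper's proof: reduce to a single short braid relation and take the heap isomorphism $(\sigma,\gamma)$ with $\sigma$ the adjacent transposition and $\gamma=\id_\Gamma$. The paper simply asserts that $\sigma$ is a poset isomorphism, whereas you supply the (correct) verification that it is an isomorphism of the oriented graphs $(G_\w,\omega_\w)\to(G_{\w'},\omega_{\w'})$, with the hypothesis $m(s_{x_k},s_{x_{k+1}})=2$ ruling out the one edge whose orientation could be reversed.
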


\begin{proof}
Since $\w$ and $\w'$ differ by a sequence of short braid relations, it suffices to consider the case when they differ by a single adjacent transposition $s_{x_i}s_{x_{i+1}}\leftrightarrow s_{x_{i+1}}s_{x_i}$. In this case, the heap isomorphism is $(\sigma,\gamma)$, where the transposition $\sigma=(i\;\,i\!+\!1)$ is a poset isomorphism and $\gamma$ is the identity. 
\end{proof}

Henceforth, we will always speak of heaps up to isomorphism. By Proposition~\ref{prop:isomorphic-heaps}, each commutativity class of $w$ has a unique heap. Therefore, if $w$ is FC, then we may speak of $\HHH(w):=\HHH(\w)$ as the heap of the group element $w$. In contrast, for non-FC elements, different commutativity classes generally give non-isomorphic heaps. Our running example illustrates this nicely.

\begin{runningcont}
  Let us recall $w=s_3s_1s_2s_1s_2$ first as an element in the Coxeter group $W(B_2)$, and then in $W(H_3)$. In $W(B_2)$, it has two commutativity classes, and the associated heaps were shown in Figure~\ref{fig:b2}. In $W(H_3)$, the bond strength between $s_1$ and $s_2$ is increased to $m(s_1,s_2)=5$. This means that $w=s_3s_1s_2s_1s_2$ is FC, and so the only heap of this group element is the first one shown in Figure~\ref{fig:b2}.
\end{runningcont}

Heaps of reduced words in Coxeter groups were studied by Stembridge in~\cite{stembridge1996fully}, though his definition was slightly different, in that he considered $i$ and $j$ \emph{incomparable} if $s_{x_i}=s_{x_j}$. For reduced words, this makes no difference. In our setting, such an $i$ and $j$ must be comparable because the $\phi$-preimage of each edge is a chain, and also contains the preimages of two vertices, and chains in posets are closed under subsets. This is also why the vertex chain requirement is built into the definition. The advantage of our framework and this requirement will become more apparent when we formalize cyclic reducibility using toric heaps. This is also in line with Viennot's original definition \cite{viennot1986combinatoire}.

The concept of a labeled linear extension of a heap was studied in \cite{stembridge1996fully}. Here, we give an abstract definition of a more general concept in our framework. We also drop the word ``labeled'' because it is implied in the context of heaps. Say that a map $\gamma\colon\Gamma\to\Gamma'$ is an \emph{edge-inclusion} if it is the identity map between graphs on the same vertex set, and every edge in $\Gamma$ is also in $\Gamma'$. 

\begin{defn}
If $(P',\Gamma',\phi')$ is the image of a morphism from a heap $(P,\Gamma,\phi)$, where $\sigma\colon P\to P'$ is an extension, and $\gamma\colon\Gamma\to\Gamma'$ is an edge-inclusion, then we say that it is an \emph{extension} of $(P,\Gamma,\phi)$. Moreover, it is a \emph{linear extension} of heaps if $\sigma$ is a linear extension of posets. 
\end{defn}

Given a heap $(P,\Gamma,\phi)$ and a graph homomorphism $\gamma\colon\Gamma\to\Gamma'$, there need not be a poset $P'$ and a map $\sigma\colon P\to P'$ such that $(\sigma,\gamma)$ is a morphism to a heap $(P',\Gamma',\phi')$. However, there will always be (at least) one if $\gamma$ is an edge-inclusion, and it is easy to see how to construct $P'$ -- it is a poset generated by the relations in $P$ with edge chains for each additional edge in $\Gamma'$. In general, such a $P'$ is not unique because there could be a choice of how to order the elements within each new edge chain, as shown in the following simple example.

\begin{ex}\label{ex:heap-extensions}
  Let $\gamma\colon\Gamma\to\Gamma'$ be the edge-inclusion between the edgeless graph on $V=\{v_1,v_2\}$ to the complete graph. The antichain $P=\{1,2\}$ and labeling map $\phi(i)=v_i$ define a heap $(P,\Gamma,\phi)$. There are two linear extensions of $P$: let $P'$ denote the one with $1<_{P'}2$ and $P''$ the one with $2<_{P''}1$. There are canonical heap morphisms $(\sigma',\gamma)$ and $(\sigma'',\gamma)$ from $(P,\Gamma,\phi)$ to $(P',\Gamma',\phi')$ and $(P'',\Gamma',\phi'')$, respectively. Here, $\sigma'$ and $\sigma''$ both send $i\mapsto i$, and $\phi'$ and $\phi''$ both send $i\mapsto v_i$. These are shown in Figure~\ref{fig:heap-extensions}.

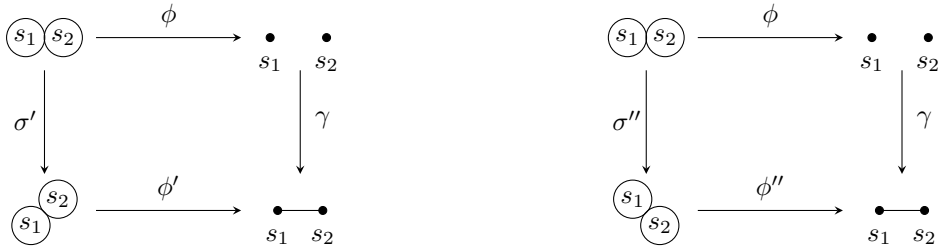
\begin{figure}[!ht]
  \tikzstyle{to} = [draw,-stealth]
  \begin{tikzpicture}[shorten >= 1pt, shorten <= 1pt,>=stealth']
    \begin{scope}[scale=1,shift={(8,0)}]
      \draw {(0,2.5) circle (0.1in) node[]{$s_1$}};
      \draw {(.5,2.5) circle (0.1in) node[]{$s_2$}};
      \draw {(.07,.35) circle (0.1in) node[]{$s_1$}};
      \draw {(.43,0) circle (0.1in) node[]{$s_2$}};
      \draw[to] (.25,2.1) to (.25,.65);
      \draw[to] (.9,2.5) to (2.9,2.5);
      \draw[to] (3.65,2.1) to (3.65,.65);
      \draw[to] (.9,.2) to (2.9,.2);
      \draw[fill=black] {(3.25,2.5) circle (1.5pt) node[label=below:$s_1$]{}};
      \draw[fill=black] {(4,2.5) circle (1.5pt) node[label=below:$s_2$]{}};
      \draw[fill=black] {(3.35,.2) circle (1.5pt) node[label=below:$s_1$]{}};
      \draw[fill=black] {(3.95,.2) circle (1.5pt) node[label=below:$s_2$]{}};
      \draw (3.35,.2) -- (3.95,.2);
      \node at (1.9,2.8) {$\phi$}; \node at (1.9,.5) {$\phi''$};
      \node at (0,1.4) {$\sigma''$}; \node at (3.95,1.4) {$\gamma$};
    \end{scope}
    \begin{scope}[scale=1,shift={(0,0)}]
      \draw {(0,2.5) circle (0.1in) node[]{$s_1$}};
      \draw {(.5,2.5) circle (0.1in) node[]{$s_2$}};
      \draw {(.07,0) circle (0.1in) node[]{$s_1$}};
      \draw {(.43,.35) circle (0.1in) node[]{$s_2$}};
      \draw[to] (.25,2.1) to (.25,.65);
      \draw[to] (.9,2.5) to (2.9,2.5);
      \draw[to] (3.65,2.1) to (3.65,.65);
      \draw[to] (.9,.2) to (2.9,.2);
      \draw[fill=black] {(3.25,2.5) circle (1.5pt) node[label=below:$s_1$]{}};
      \draw[fill=black] {(4,2.5) circle (1.5pt) node[label=below:$s_2$]{}};
      \draw[fill=black] {(3.35,.2) circle (1.5pt) node[label=below:$s_1$]{}};
      \draw[fill=black] {(3.95,.2) circle (1.5pt) node[label=below:$s_2$]{}};
      \draw (3.35,.2) -- (3.95,.2);
      \node at (1.9,2.8) {$\phi$}; \node at (1.9,.5) {$\phi'$};
      \node at (0,1.4) {$\sigma'$}; \node at (3.95,1.4) {$\gamma$};
    \end{scope}
  \end{tikzpicture}
  \caption{Two linear extensions of the same heap from
    Example~\ref{ex:heap-extensions}. 
  }\label{fig:heap-extensions}
\end{figure}
\end{ex}

Let $\LLL(\HHH)$ denote the set of linear extensions of a heap $\HHH$. In \cite{stembridge1996fully}, these are called \emph{labeled linear extensions} because they can be canonically indexed by words. For example, a linear extension $(P',\Gamma',\phi')$ can be described by the word $\phi'(x_1)\cdots\phi'(x_m)$, where $x_1<_{P'}\cdots<_{P'} x_m$.

\begin{ex}
  Let $(P,\Gamma,\phi)$ be the heap from Figure~\ref{fig:heap-example1} and $(P',\Gamma',\phi')$ the heap from Figure~\ref{fig:heap-example2}. Note that it is easy to define the labeling maps $\phi$ and $\phi'$ to adapt those heaps to our framework. Since $\sigma\colon P\to P'$ is an extension and $\gamma$ an edge-inclusion, the morphism $(\sigma,\gamma)$ is an extension of heaps. The total order $\w=acbabgdfe$ uniquely describes a heap that is a linear extension of both $(P,\Gamma,\phi)$ and $(P',\Gamma',\phi')$. 

  In the examples from Figure~\ref{fig:heap-extensions}, the two linear extensions are clearly characterized by the words $\w'=s_1s_2$ (left) and $\w''=s_2s_1$ (right).
\end{ex}

\section{Coxeter, FC, and CFC elements}\label{sec:coxeter-fc-cfc}

One of the goals of this paper is to develop a framework for studying what we call ``\emph{cyclic reducibility}'' in Coxeter groups. In this section, we will formalize concepts such as cyclic words and cyclic commutativity classes. In a subsequent section, we will develop a cyclic version of a heap called a \emph{toric heap}, which is essentially a labeled \emph{toric poset}. To motivate this, we will begin with conjugation of Coxeter elements, and then extend that to the cyclically fully commutative (CFC) elements. Throughout, let $(W,S)$ be a fixed Coxeter system with Coxeter graph $\Gamma$. Given words, e.g., $\c,\w,\w'$ in $S^*$, we will denote the corresponding group elements by $c,w,w'$ in $W$. 

\subsection{Conjugation of Coxeter elements}\label{subsec:coxeter-elements}

Let $(W,S)$ be a Coxeter system. A \emph{Coxeter element} is the product of all generators in some order. We denote the set of all Coxeter elements by $\C(W,S)$. Every Coxeter element of $(W,S)$ gives rise to a canonical acyclic orientation of the Coxeter graph $\Gamma$, defined by
\begin{align*}
  c\colon \Acyc(\Gamma)\longto\C(W,S),\qquad c\colon\omega\longmapsto s_{x_1}\cdots s_{x_n},
\end{align*}
where $s_{x_1}\cdots s_{x_n}$ is any linear extension of $P(\Gamma,\omega)$. It is easy to see that this map is a bijection, and so we write $c(\omega)$ to denote ``\emph{the Coxeter element defined by $\omega$}'', and $\omega(c)$ for ``\emph{the acyclic orientation given by $c$}''. Since Coxeter elements are FC, the heap poset of $c\in\C(W,S)$ does not depend on the choice of reduced word, so we may write $P_c:=P_\c$. 

\begin{prop}
  The heap poset of a Coxeter element $c$ is $P_\c=P(\Gamma,\omega(c))$ for any reduced word $\c$.
\end{prop}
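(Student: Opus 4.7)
The plan is to show that after the canonical identification of the underlying set $[n]$ of $P_\c$ with the generating set $S$ via the bijective labeling map $\phi_\c$, the data $(G_\c,\omega_\c)$ defining $P_\c$ agree with the data $(\Gamma,\omega(c))$ defining $P(\Gamma,\omega(c))$. The point is that a Coxeter element's reduced word $\c=s_{x_1}\cdots s_{x_n}$ uses each generator exactly once, so $\phi_\c\colon[n]\to S$, $i\mapsto s_{x_i}$, is a bijection; this will be the only identification needed.

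First I would check the underlying graphs agree. By Definition~\ref{defn:w-graph}, $G_\c$ has an edge $\{i,j\}$ precisely when $m(s_{x_i},s_{x_j})\neq 2$, while $\Gamma$ has an edge $\{s_k,s_l\}$ precisely when $m(s_k,s_l)\neq 2$. Under the bijection $\phi_\c$, these edge sets correspond. Next I would verify that the orientation $\omega_\c$ on $G_\c$ pulls back to $\omega(c)$ on $\Gamma$. By the very definition of the map $c\colon\Acyc(\Gamma)\to\C(W,S)$, the word $\c=s_{x_1}\cdots s_{x_n}$ is a linear extension of $P(\Gamma,\omega(c))$. This forces, for each edge $\{s_{x_i},s_{x_j}\}$ of $\Gamma$ with $i<j$, the orientation $\omega(c)$ to point $s_{x_i}\to s_{x_j}$; but this is exactly the image under $\phi_\c$ of the orientation $\omega_\c$, which points $i\to j$ whenever $i<j$. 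Hence $(G_\c,\omega_\c)$ and $(\Gamma,\omega(c))$ are identified, and consequently $P_\c=P(G_\c,\omega_\c)$ equals $P(\Gamma,\omega(c))$.

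Finally, to justify the assertion that this holds for \emph{any} reduced word $\c$ of $c$, I would invoke the fact that Coxeter elements are fully commutative (any two reduced expressions differ only by short braid relations among commuting generators), so Proposition~\ref{prop:isomorphic-heaps} guarantees that $\HHH(\c)$ is well-defined up to isomorphism. Alternatively, a direct check shows that the right-hand side $P(\Gamma,\omega(c))$ depends only on $c$ through $\omega(c)$, and the verification above then pins down $P_\c$ for each $\c$.

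The argument is essentially a bookkeeping exercise, so there is no serious obstacle. The only mildly delicate point is keeping the two parallel notations straight: $P_\c$ is defined with underlying set $[n]$ via $\w$-indexing, whereas $P(\Gamma,\omega(c))$ lives on $S$, and one must make explicit that $\phi_\c$ intertwines the two. Once this identification is in place, the equality reduces to matching edges and matching orientations, each of which is immediate from the relevant definition.
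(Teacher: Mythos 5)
Your proof is correct and follows essentially the same route as the paper: both arguments use that a reduced word for a Coxeter element contains each generator exactly once, so the labeling map is a bijection identifying $G_\c$ with $\Gamma$, and then check that each edge $\{i,j\}$ with $i<j$ is oriented consistently by $\omega_\c$ and $\omega(c)$. The paper phrases this verification in terms of the vertex- and edge-chain conditions of Definition~\ref{defn:heap}, but the content is the same bookkeeping you carry out.
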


\begin{proof}
  Let $\c=s_{x_1}\cdots s_{x_n}$. By construction, $P_\c=[n]$, and the labeling map $\phi_\c\colon P_\c\to\Gamma$ is defined by $\phi_\c(i)=s_{x_i}$. Since $\c$ has no repeated generators, each preimage $\phi^{-1}(s_{x_i})$ has size $1$, and so is trivially a chain. For each edge $\{s_{x_i},s_{x_j}\}$ in $\Gamma$, say $i<j$ without loss of generality, the preimage $\phi^{-1}(\{s_{x_i},s_{x_j}\})=\{i,j\}$ is a chain with $i<_{P_\c}j$. This matches the orientation of the edge $\{i,j\}$ by $\omega(c)$ from Definition~\ref{defn:w-graph}. 
\end{proof}

Since linear extensions of heaps can be indexed with words, we can consider the set $\mathcal{L}(\mathcal{H}(w))$ as a collection of words in $S^*$. A subset of $S^*$ is said to be \emph{order-theoretic} if it is the set of linear extensions of a heap. The following is a slight reformulation of Theorem 3.2 from~\cite{stembridge1996fully}.

\begin{prop}\label{prop:order-theoretic}
  For an element $w\in W$, the following are equivalent:
\begin{enumerate}[(i)]
\item $w$ is fully commutative.
\item $\RRR(w)$ is order-theoretic.
\item $\RRR(w)=\LLL(\HHH(\u))$ for some (equivalently, every) $\u \in \RRR(w)$.
\end{enumerate}
\end{prop}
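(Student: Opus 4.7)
The plan is to establish all three equivalences through a single identity: for any reduced word $\u\in\RRR(w)$,
\[
\LLL(\HHH(\u)) \;=\; \mathcal{C}(\u),
\]
i.e., the labeled linear extensions of $\HHH(\u)$, read as words in $S^*$, are exactly the members of the commutativity class of $\u$. The inclusion $\mathcal{C}(\u)\subseteq\LLL(\HHH(\u))$ is essentially Proposition~\ref{prop:isomorphic-heaps}: each $\w\sim\u$ arises from $\u$ by a finite sequence of short braid relations, and each such relation swaps adjacent commuting generators $s,t$ with $m(s,t)=2$. The corresponding positions are incomparable in $P_\u$ (they lie in no common edge chain), so each swap is a transposition of adjacent incomparable elements in a linear extension of $P_\u$; iterating from the canonical linear extension attached to $\u$ produces $\w$ as the word of a linear extension of $\HHH(\u)$.

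For the reverse inclusion, I would invoke the standard fact that the linear extensions of any finite poset form a connected graph under transpositions of adjacent incomparable elements. The crucial observation is that in any heap $(P,\Gamma,\phi)$, two adjacent incomparable positions must carry commuting generators: the edge-chain axiom of Definition~\ref{defn:heap} forces any pair of positions with non-commuting labels into a common edge chain and hence makes them comparable. Therefore every linear extension of $\HHH(\u)$ can be connected to $\u$ by swaps corresponding to short braid relations, yielding a word in $\mathcal{C}(\u)$.

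With the identity in hand, (i) $\Rightarrow$ (iii) is immediate, since FC means $\RRR(w)=\mathcal{C}(\u)=\LLL(\HHH(\u))$ for any $\u\in\RRR(w)$, and (iii) $\Rightarrow$ (ii) is tautological from the definition of ``order-theoretic.'' For (ii) $\Rightarrow$ (i), if $\RRR(w)=\LLL(\HHH)$ for some heap $\HHH$, the same connectivity-plus-edge-chain observation applied to $\HHH$ directly shows that all of $\LLL(\HHH)$ lies in a single commutativity class, so $\RRR(w)$ does too and $w$ is FC. The ``some $\Leftrightarrow$ every'' clause in (iii) then follows at no extra cost: once $w$ is FC, Proposition~\ref{prop:isomorphic-heaps} gives $\HHH(\u')\cong\HHH(\u)$ for all $\u,\u'\in\RRR(w)$, whence $\LLL(\HHH(\u'))=\LLL(\HHH(\u))$.

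The main obstacle is the reverse inclusion in the key identity, where one must bridge two superficially different phenomena: the purely poset-theoretic connectivity of linear extensions via transpositions of incomparable adjacent elements, and the algebraic fact that such swaps correspond to short braid relations in $S^*$. The edge-chain axiom is what makes the bridge work, by ruling out adjacent incomparable pairs carrying non-commuting labels. After this, the three-way equivalence drops out from the definitions with no further input beyond Matsumoto's theorem (used implicitly via Proposition~\ref{prop:isomorphic-heaps}).
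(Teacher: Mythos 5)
Your proposal is correct. Note that the paper itself gives no proof of this proposition --- it is stated as a slight reformulation of Theorem~3.2 of Stembridge \cite{stembridge1996fully} and cited rather than proved --- but your argument is precisely the standard one underlying that theorem: the identity $\LLL(\HHH(\u))=\mathcal{C}(\u)$, obtained by matching short braid relations with transpositions of adjacent incomparable elements (using connectivity of the linear-extension graph in one direction and the vertex- and edge-chain axioms in the other), after which the three-way equivalence and the ``some $\Leftrightarrow$ every'' clause follow formally via Proposition~\ref{prop:isomorphic-heaps}. The one point worth making explicit is that in the step (ii)~$\Rightarrow$~(i) the witnessing heap must be a heap over the Coxeter graph $\Gamma$, so that ``incomparable'' really does force ``commuting''; this is implicit in the paper's definition of order-theoretic and your argument uses it correctly.
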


  We will state and prove a cyclic analogue of this result in Theorem~\ref{thm:torically-order-theoretic}, which involves a new class of elements that are called \emph{torically fully commutative} (TFC). Before we can get there, we will first motivate the idea of cyclic reducibility with a simple example involving Coxeter elements. Observe that a cyclic shift of a Coxeter element is also a Coxeter element, and because $s=s^{-1}$ for every $s\in S$, it is also a conjugation by the initial letter:
\[
s_{x_1}(s_{x_1}s_{x_2}\cdots s_{x_m})s_{x_1}=s_{x_2}\cdots s_{x_m}s_{x_1}.
\]
On the level of acyclic orientations, these two Coxeter elements are related by converting the source vertex $s_{x_1}$ (an initial generator) into a sink (a terminal generator). This generates an equivalence relation $\equiv$ on $\Acyc(\Gamma)$, and hence on $\C(W,S)$, that we call \emph{toric equivalence}. This was first studied by Pretzel in \cite{pretzel1986reorienting} via an operation he called ``pushing down maximal vertices''. In~\cite{eriksson2009conjugacy}, H.~Eriksson and K.~Eriksson showed that these equivalence classes are in 1--1 correspondence with the conjugacy classes of $\C(W,S)$. In a recent FPSAC paper of Adin et al.\@ that introduces toric $P$-partitions, these equivalence classes are called \emph{toric DAGs} \cite{adin2018cyclic}.

\begin{thm}[\cite{eriksson2009conjugacy}]\label{thm:erikssons}
  In any Coxeter group, $c,c'\in\C(W,S)$ are conjugate if and only if $\omega(c)\equiv\omega(c')$.
\end{thm}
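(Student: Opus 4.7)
My plan is to prove the two directions separately, with the forward direction being essentially computational and the reverse direction requiring a length-minimality argument.

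For the forward direction, $\omega(c)\equiv\omega(c')\Rightarrow c\sim c'$, it suffices to handle a single source-to-sink move and then iterate. If $s$ is a source of $\omega(c)$, then $s$ is a minimum of the heap poset $P_\c$, so $c$ admits a reduced word beginning with $s$, say $c = su$ with $\ell(u)=n-1$. The flipped orientation $\omega'$ agrees with $\omega(c)$ on $\Gamma\setminus\{s\}$ but has $s$ as a sink, so $c(\omega')$ admits a reduced word of the form $u's$ where $u'$ is any linear extension of the induced orientation on $\Gamma\setminus\{s\}$; in particular we may take $u'=u$, giving $c(\omega') = us$. Then $scs = s(su)s = us = c(\omega')$, so the two Coxeter elements are conjugate by $s$. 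Iterating along any chain of source-to-sink moves yields a sequence of conjugations by simple generators, establishing the forward direction.

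For the reverse direction, which is the main obstacle, I would induct on the length of a minimal-length conjugator $w$ realizing $c' = wcw^{-1}$. If $\ell(w)=0$ there is nothing to show. Otherwise write $w = w_1 s$ in reduced form and set $c'' := scs$. The key claim is that $c''$ is itself a Coxeter element and $\omega(c'')$ differs from $\omega(c)$ by a single source-to-sink move; granting this, $c' = w_1 c'' w_1^{-1}$ with $\ell(w_1) = \ell(w)-1$, and the inductive hypothesis combined with the forward-direction calculation gives $\omega(c')\equiv\omega(c'')\equiv\omega(c)$. To justify the claim, one argues that the minimality of $\ell(w)$ forces $s$ to be either a source or a sink of $\omega(c)$: by the exchange condition, $\ell(scs)\in\{n-2,n,n+2\}$, and the extremes would either contradict $c$ having length $n$ in its own conjugacy class, or allow one to replace $w$ by a shorter conjugator by absorbing a length reduction. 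The middle case, $\ell(scs)=n$, is easily seen to occur precisely when $s$ is initial or terminal in some reduced word for $c$, i.e., when $s$ is a source or sink of $\omega(c)$.

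The crux of the argument, and the main obstacle, is establishing that Coxeter elements are of minimal length within their conjugacy class and that length-preserving conjugations by simple generators correspond exactly to source/sink moves on the associated orientations. The cleanest modern route is to invoke the Geck--Pfeiffer theorem on cyclic shift classes, which asserts that minimal-length elements in any conjugacy class of a Coxeter group are connected by length-preserving conjugations by simple generators; combined with the fact that every Coxeter element has length exactly $n = |S|$ (and that a conjugate of full-support element cannot drop below this minimum), this yields both ingredients. Alternatively, a self-contained combinatorial argument in the spirit of the original Eriksson--Eriksson proof proceeds by directly tracking how conjugation by a simple generator acts on an acyclic orientation of $\Gamma$, reducing the conjugator step by step until both Coxeter elements are explicitly linked by source-to-sink operations.
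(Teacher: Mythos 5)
First, a point of comparison: the paper offers no proof of this statement at all --- it is imported from \cite{eriksson2009conjugacy} as a black box --- so there is no internal argument to match against; your attempt has to stand on its own. Your forward direction does: it is exactly the observation the paper makes around Eq.~\eqref{eqn:cyclic-shift} and in Section~\ref{subsec:coxeter-elements}, namely that a source $s$ of $\omega(c)$ is an initial letter of some reduced word for $c$, so conjugation by $s$ is a cyclic shift whose effect on the orientation is the source-to-sink flip.

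The reverse direction, however, has a genuine gap, and it sits precisely where the paper says the difficulty lies. Your induction hinges on the claim that if $w=w_1s$ is a reduced expression for a minimal-length conjugator with $c'=wcw^{-1}$, then $s$ must be a source or sink of $\omega(c)$. The justification offered does not close. If $s$ is neither a source nor a sink, then (granting $\ell(scs)=n+2$, which is itself only asserted) you obtain $c'=w_1(scs)w_1^{-1}$ with $\ell(w_1)=\ell(w)-1$; but $w_1$ conjugates $scs$ to $c'$, not $c$ to $c'$, so no shorter conjugator of $c$ is produced and no contradiction with the minimality of $w$ follows. Ruling out $\ell(scs)=n-2$ likewise presupposes that Coxeter elements are of minimal length in their conjugacy class, which is not free. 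The fallback of invoking Geck--Pfeiffer is also problematic: that connectivity theorem for minimal-length elements was proved for \emph{finite} Coxeter groups, whereas the statement here is for arbitrary Coxeter groups, and its extension to the infinite case is a later, deep result (due to Marquis), not a routine quotation. Even granting it, one still needs the minimality of Coxeter elements in their conjugacy classes, which in the infinite case rests on the logarithmic property (\cite{speyer2009powers}, \cite{eriksson2009conjugacy}). The paper flags this explicitly in Section~\ref{subsec:cfc}, er, in the discussion preceding Theorem~\ref{thm:erikssons}'s applications: ``The logarithmic property was key to the Erikssons' proof of the conjugacy problem (Theorem~\ref{thm:erikssons}).'' Your proposal never engages with this ingredient, and without it the induction on conjugator length cannot be completed for infinite groups. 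A correct write-up would either establish (or cite) the logarithmic property and use reducedness of $c^k$ and $(c')^k$ to force the conjugator to factor through source-to-sink moves, or honestly restrict the elementary argument to the finite case.
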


Thus, there are bijections between the Coxeter elements and $\Acyc(\Gamma)$, and between their conjugacy classes and toric equivalence classes, defined as follows:
\begin{align*}
  \def\arraystretch{1.5}
  \begin{array}{cllllc}
    \C(W,S)\longto\Acyc(\Gamma) &&&&&
    \Conj(\C(W,S))\longto\Acyc(\Gamma)/\!\!\equiv \\
    \,\,\,\,\,\,\,\,c\longmapsto\omega(c) &&&&& \,\,\,\,\,\,\,\,\,\cl_W(c)\longmapsto[\omega(c)].
  \end{array}
\end{align*}
The sets $\Acyc(\Gamma)$ and $\Acyc(\Gamma)/\!\!\equiv$ are enumerated by the Tutte polynomial $T_\Gamma(x,y)$ at $(x,y)=(2,0)$ and $(x,y)=(1,0)$, respectively \cite{develin2016toric}.

\begin{ex}\label{ex:C_4}
Consider the affine Coxeter group $W=W(\widetilde{A}_3)$, whose Coxeter graph is the circular graph $C_4$. Consider the following three reduced words for Coxeter elements: $\c_1=s_1s_2s_3s_4$, $\c_2=s_1s_3s_2s_4$, and $\c_3=s_1s_4s_3s_2$ (using $s_4$ instead of the usual $s_0$). The corresponding acyclic orientations are shown below.\footnote{For convenience, in the examples in this section, all of the graphs are drawn so that their vertices are the actual generators, rather than labeled bullets.} 
\begin{equation}\label{eq:3coxeter-elts}
  \tikzstyle{to} = [draw,-stealth]
  \tikzstyle{e} = [draw]
  \begin{tikzpicture}[scale=.7,shorten >= -2pt, shorten <= -2pt,shift={(0,0)},
    baseline=(current bounding box.center)]
    \node at (-3,1.75) {$c_1=s_1s_2s_3s_4$};
    \node at (-3,1) {$c_2=s_1s_3s_2s_4$};
    \node at (-3,.25) {$c_3=s_1s_4s_3s_2$};
    \node (s4) at (0,0) {$s_4$};
    \node (s1) at (0,2) {$s_1$};
    \node (s3) at (2,0) {$s_3$};
    \node (s2) at (2,2) {$s_2$};
    \draw (s1) -- (s2) -- (s3) -- (s4) -- (s1);
    \node at (1,1) {$\Gamma$};
    \begin{scope}[shift={(5,0)}]
      \node (s4) at (0,0) {$s_4$};
      \node (s1) at (0,2) {$s_1$};
      \node (s3) at (2,0) {$s_3$};
      \node (s2) at (2,2) {$s_2$};
      \draw[to] (s1) -- (s2);
      \draw[to] (s2) -- (s3);
      \draw[to] (s3) -- (s4);
      \draw[to] (s1) -- (s4);
      \node at (1,1) {$\omega(c_1)$};
    \end{scope}
    \begin{scope}[shift={(9,0)}]
      \node (s4) at (0,0) {$s_4$};
      \node (s1) at (0,2) {$s_1$};
      \node (s3) at (2,0) {$s_3$};
      \node (s2) at (2,2) {$s_2$};
      \draw[to] (s1) -- (s2);
      \draw[to] (s3) -- (s2);
      \draw[to] (s3) -- (s4);
      \draw[to] (s1) -- (s4);
      \node at (1,1) {$\omega(c_2)$};
    \end{scope}
    \begin{scope}[shift={(13,0)}]
      \node (s4) at (0,0) {$s_4$};
      \node (s1) at (0,2) {$s_1$};
      \node (s3) at (2,0) {$s_3$};
      \node (s2) at (2,2) {$s_2$};
      \draw[to] (s1) -- (s2);
      \draw[to] (s3) -- (s2);
      \draw[to] (s4) -- (s3);
      \draw[to] (s1) -- (s4);
      \node at (1,1) {$\omega(c_3)$};
    \end{scope}
  \end{tikzpicture}
\end{equation}
The $4!=24$ reduced words for Coxeter elements in $W(\widetilde{A}_3)$ comprise $|\!\Acyc(C_4)|=2^4-2=14$ distinct elements. The 14 acyclic orientations fall into $|\!\Acyc(C_4)/\!\!\equiv\!|=3$ toric equivalence classes: $[\omega(c_1)]$ and $[\omega(c_3)]$ have size $4$, and $[\omega(c_2)]$ has size $6$, the elements of which are shown below.
\begin{equation}\label{eq:6orientations}
    \tikzstyle{to} = [draw,-stealth]
    \tikzstyle{To} = [draw,-stealth,dashed]
    \begin{tikzpicture}[scale=.35,shorten >= -2.5pt, shorten <= -2.5pt,
        baseline=(current bounding box.center)]
      \node (3) at (0,4) {\small $s_3$};
      \node (4) at (-2,2) {\small $s_4$};
      \node (2) at (2,2) {\small $s_2$};
      \node (1) at (0,0) {\small $s_1$};
      \draw[to] (1) to (2);
      \draw[to] (1) to (4);
      \draw[to] (2) to (3);
      \draw[to] (4) to (3);
      \node at (0,2) {$\omega(c_2)$};
      \node at (3.5,2) {$\equiv$};
      \begin{scope}[shift={(5,0)}]
        \node (2) at (0,0) {\small $s_2$};
        \node (1) at (0,3) {\small $s_1$};
        \node (3) at (2.5,3) {\small $s_3$};
        \node (4) at (2.5,0) {\small $s_4$};
        \draw[to] (2) to (1);
        \draw[to] (4) to (1);
        \draw[to] (2) to (3);
        \draw[to] (4) to (3);
        \node at (4,2) {$\equiv$};
      \end{scope}
      \begin{scope}[shift={(12.5,0)}]
        \node (2) at (0,4) {\small $s_2$};
        \node (1) at (-2,2) {\small $s_1$};
        \node (3) at (2,2) {\small $s_3$};
        \node (4) at (0,0) {\small $s_4$};
        \draw[to] (4) to (1);
        \draw[to] (4) to (3);
        \draw[to] (1) to (2);
        \draw[to] (3) to (2);
        \node at (3.5,2) {$\equiv$};
      \end{scope}
      \begin{scope}[shift={(19.5,0)}]
        \node (4) at (0,4) {\small $s_4$};
        \node (1) at (-2,2) {\small $s_1$};
        \node (3) at (2,2) {\small $s_3$};
        \node (2) at (0,0) {\small $s_2$};
        \draw[to] (1) to (4);
        \draw[to] (3) to (4);
        \draw[to] (2) to (1);
        \draw[to] (2) to (3);
        \node at (3.5,2) {$\equiv$};
      \end{scope}
      \begin{scope}[shift={(24.5,0)}]
        \node (1) at (0,0) {\small $s_1$};
        \node (2) at (0,3) {\small $s_2$};
        \node (4) at (2.5,3) {\small $s_4$};
        \node (3) at (2.5,0) {\small $s_3$};
        \draw[to] (1) to (2);
        \draw[to] (1) to (4);
        \draw[to] (3) to (2);
        \draw[to] (3) to (4);
        \node at (4,2) {$\equiv$};
      \end{scope}
      \begin{scope}[shift={(32,0)}]
        \node (1) at (0,4) {\small $s_1$};
        \node (2) at (-2,2) {\small $s_2$};
        \node (4) at (2,2) {\small $s_4$};
        \node (3) at (0,0) {\small $s_3$};
        \draw[to] (3) to (2);
        \draw[to] (3) to (4);
        \draw[to] (2) to (1);
        \draw[to] (4) to (1);
      \end{scope}
  \end{tikzpicture}
\end{equation}

By Theorem~\ref{thm:erikssons}, the 14 Coxeter elements fall into 3 distinct conjugacy classes, and any two conjugate Coxeter elements differ only by cyclic shifts and short braid relations. This can be visualized by writing the reduced words as \emph{circular words}, and allowing the usual braid relations:
\[
  \begin{tikzpicture}
    \begin{scope}[shift={(-5,-.75)},>=stealth',scale=.7]
      \node (s4) at (0,0) {$s_4$};
      \node (s1) at (0,2) {$s_1$};
      \node (s3) at (2,0) {$s_3$};
      \node (s2) at (2,2) {$s_2$};
      \draw (s1) -- (s2) -- (s3) -- (s4) -- (s1);
      \node at (1,1) {$\Gamma$};
    \end{scope}
    \begin{scope}[shift={(0,0)},>=stealth',scale=.62]
      \draw[decoration={markings, mark=at position 0.125 with
          {\arrow{<}}}, postaction={decorate}] (0,0) circle(1.4cm);
      \draw[decoration={markings, mark=at position 0.125 with
          {\arrow{<}}}, postaction={decorate}] 
      (0cm,0cm) circle(0.65cm); 
      \draw (90:1) node{$s_1$}; 
      \draw (0:1) node{$s_2$}; 
      \draw (-90:1) node{$s_3$}; 
      \draw (-180:1) node{$s_4$};
      \node at (0,0) {$[\c_1]$};
    \end{scope}
    \begin{scope}[shift={(3.5,0)},>=stealth',scale=.62]
      \draw[decoration={markings, mark=at position 0.125 with
          {\arrow{<}}}, postaction={decorate}] (0,0) circle(1.4cm);
      \draw[decoration={markings, mark=at position 0.125 with
          {\arrow{<}}}, postaction={decorate}] 
      (0cm,0cm) circle(0.65cm); 
      \draw (90:1) node{$s_1$}; 
      \draw (0:1) node{$s_3$}; 
      \draw (-90:1) node{$s_2$}; 
      \draw (-180:1) node{$s_4$};
      \node at (0,0) {$[\c_2]$};
    \end{scope}
    \begin{scope}[shift={(7,0)},>=stealth',scale=.62]
      \draw[decoration={markings, mark=at position 0.125 with
          {\arrow{<}}}, postaction={decorate}] (0,0) circle(1.4cm);
      \draw[decoration={markings, mark=at position 0.125 with
          {\arrow{<}}}, postaction={decorate}] 
      (0cm,0cm) circle(0.65cm); 
      \draw (90:1) node{$s_1$}; 
      \draw (0:1) node{$s_4$}; 
      \draw (-90:1) node{$s_3$}; 
      \draw (-180:1) node{$s_2$};
      \node at (0,0) {$[\c_3]$};
    \end{scope}
  \end{tikzpicture}
  \]
In the circular words $[\c_1]$ and $[\c_3]$ above, no short braid relations can be applied, so both $c_1$ and $c_3$ are conjugate to only $4$ Coxeter elements each -- all cyclic shifts. In contrast, the relations $s_1s_3=s_3s_1$ and $s_2s_4=s_4s_2$ can be applied to $[\c_2]$, yielding three other ``reduced cyclic words'':

\begin{equation}\label{eq:4words}
  \begin{tikzpicture}[baseline=(current bounding box.center)]
    \begin{scope}[shift={(-4,-.75)},>=stealth',scale=.7]
      \node (s4) at (0,0) {$s_4$};
      \node (s1) at (0,2) {$s_1$};
      \node (s3) at (2,0) {$s_3$};
      \node (s2) at (2,2) {$s_2$};
      \draw (s1) -- (s2) -- (s3) -- (s4) -- (s1);
      \node at (1,1) {$\Gamma$};
    \end{scope}
    \begin{scope}[shift={(0,0)},>=stealth',scale=.62]
      \draw[decoration={markings, mark=at position 0.125 with
          {\arrow{<}}}, postaction={decorate}] (0,0) circle(1.4cm);
      \draw[decoration={markings, mark=at position 0.125 with
          {\arrow{<}}}, postaction={decorate}] 
      (0cm,0cm) circle(0.65cm); 
      \draw (90:1) node{$s_1$}; 
      \draw (0:1) node{$s_3$}; 
      \draw (-90:1) node{$s_2$}; 
      \draw (-180:1) node{$s_4$};
      \node at (0,0) {$[\c_2]$};
      \node at (2,0) {$\sim$};
    \end{scope}
    \begin{scope}[shift={(2.5,0)},>=stealth',scale=.62]
      \draw[decoration={markings, mark=at position 0.125 with
          {\arrow{<}}}, postaction={decorate}] (0,0) circle(1.4cm);
      \draw[decoration={markings, mark=at position 0.125 with
          {\arrow{<}}}, postaction={decorate}] 
      (0cm,0cm) circle(0.65cm); 
      \draw (90:1) node{$s_3$}; 
      \draw (0:1) node{$s_1$}; 
      \draw (-90:1) node{$s_2$}; 
      \draw (-180:1) node{$s_4$};
      \node at (2,0) {$\sim$};
    \end{scope}
    \begin{scope}[shift={(5,0)},>=stealth',scale=.62]
      \draw[decoration={markings, mark=at position 0.125 with
          {\arrow{<}}}, postaction={decorate}] (0,0) circle(1.4cm);
      \draw[decoration={markings, mark=at position 0.125 with
          {\arrow{<}}}, postaction={decorate}] 
      (0cm,0cm) circle(0.65cm); 
      \draw (90:1) node{$s_3$}; 
      \draw (0:1) node{$s_1$}; 
      \draw (-90:1) node{$s_4$}; 
      \draw (-180:1) node{$s_2$};
      \node at (2,0) {$\sim$};
    \end{scope}
    \begin{scope}[shift={(7.5,0)},>=stealth',scale=.62]
      \draw[decoration={markings, mark=at position 0.125 with
          {\arrow{<}}}, postaction={decorate}] (0,0) circle(1.4cm);
      \draw[decoration={markings, mark=at position 0.125 with
          {\arrow{<}}}, postaction={decorate}] 
      (0cm,0cm) circle(0.65cm); 
      \draw (90:1) node{$s_1$}; 
      \draw (0:1) node{$s_3$}; 
      \draw (-90:1) node{$s_4$}; 
      \draw (-180:1) node{$s_2$};
    \end{scope}
  \end{tikzpicture}
  \end{equation}

There are six distinct Coxeter elements, and 16 reduced words, that can arise from these four cyclic words, assuming they are read off clockwise:
\begin{equation}\label{eq:16elements}
\arraycolsep=1.4pt
\begin{array}{rrrrrr}
  s_1s_2s_4s_3\,\quad\;\;& s_2s_4s_1s_3\,\quad\;\;& s_4s_1s_3s_2\,\quad\;\;&
  s_2s_1s_3s_4\,\quad\;\;& s_1s_3s_2s_4\,\quad\;\;& s_3s_2s_4s_1\, \\
  =s_1s_4s_2s_3\,\quad\;\;& =s_2s_4s_3s_1\,\quad\;\;& =s_4s_3s_1s_2\,\quad\;\;&
  =s_2s_3s_1s_4\,\quad\;\;& =s_1s_3s_4s_2\,\quad\;\;& =s_3s_4s_2s_1\, \\
                 \quad\;\;& =s_4s_2s_3s_1\,\quad\;\;& &
  & =s_3s_1s_4s_2\,\quad\;\;& \\
                 \quad\;\;& =s_4s_2s_1s_3\,\quad\;\;& &
  & =s_3s_1s_2s_4\,\quad\;\;& \\
\end{array}
\end{equation}
The elements in the $i^{\rm th}$ column above are the linear extensions of the poset defined by the $i^{\rm th}$ orientation in Eq.~\eqref{eq:6orientations}. 
\end{ex}

Example~\ref{ex:C_4} should motivate the value of developing a theory of cyclic reducibility in Coxeter groups. For example, the four cyclic words in Eq.~\eqref{eq:4words} should be thought of as lying in the ``cyclic commutativity class'' containing the ``cyclic word'' $[\c_2]$. In Section~\ref{sec:cyclic-reducibility}, we will develop this framework. But first, we need to focus on the curious ``cyclic partial order'' structure that arises. Cyclic words under the equivalence generated by short braid relations are like cyclic analogues of traces, though without the monoid structure, because there is no canonical way to concatenate cyclic words. This cyclic poset structure can be formalized via \emph{toric posets}~\cite{develin2016toric}, which leads to the concept of a \emph{toric heap}. This is essentially a labeled toric poset, in the same sense of how ordinary heaps are labeled ordinary posets. It allows us to extend the examples shown in this section far beyond just Coxeter elements, which we will do next.

\subsection{Cyclically fully commutative (CFC) elements}

Now that we have seen the interplay between Coxeter elements, acyclic orientations, and heaps, and how they behave under conjugacy, we will extend these ideas to a larger class of elements. This will elucidate the key structural properties as well as motivate the main ideas of our cyclic reducibility framework.

It is well known that in any Coxeter group, if $s\in S$, then $\ell(sw)=\ell(w)\pm 1$, and so $\ell(w^k)\leq k\ell(w)$. If equality holds for all $k\in\N$, then we say that $w$ is \emph{logarithmic}.\footnote{Some authors call this property \emph{straight}, which is motivated by a geometric view of Coxeter groups. We will use the term \emph{logarithmic}, as our viewpoint is more purely combinatorial.} In 2009, it was shown independently by D.~Speyer~\cite{speyer2009powers} and H.~Eriksson and K.~Eriksson~\cite{eriksson2009conjugacy} that in infinite irreducible Coxeter systems, Coxeter elements are logarithmic. It is simple to extend this to the non-irreducible case -- each connected component of $\Gamma$ must be the Coxeter graph of an infinite group. The logarithmic property was key to the Erikssons' proof of the conjugacy problem (Theorem~\ref{thm:erikssons}). Also crucial was the source-to-sink property, i.e., toric equivalence. In plain English, we mean that (i) Coxeter elements are FC (they avoid long braids), and (ii) cyclic shifts of Coxeter elements remain FC. These properties can naturally be extended beyond Coxeter elements. 

\begin{defn}
An element $w \in W$ is \emph{cyclically fully commutative} (CFC) if for any reduced word of $w$, every cyclic shift is reduced and FC.
\end{defn}

\begin{ex}\label{ex:cfc-affine}
  Figure~\ref{fig:cfc-affine} shows examples of CFC elements in two affine Coxeter groups. On the left is the Coxeter graph of the group $W(\widetilde{C}_4)$ and the CFC element $w_1=s_0s_1s_2s_3s_4s_3s_2s_1$ drawn in a circle so the reader can visually see how there are no long braids. To the right is the Coxeter graph of the group $W(\widetilde{E}_6)$ and the CFC element $w_2=s_1s_3s_2s_4s_3s_5s_4s_6s_0s_3s_2s_6$, also drawn in a circle. 
\end{ex}

    \begin{figure}[!ht]
      \centering
      \begin{tikzpicture}
    \begin{scope}[shift={(-.75,3.25)},>=stealth',scale=.65]
      \draw[decoration={markings, mark=at position 0.125 with
          {\arrow{<}}}, postaction={decorate}] (0,0) circle(1.4cm);
      \draw[decoration={markings, mark=at position 0.125 with
          {\arrow{<}}}, postaction={decorate}] 
      (0cm,0cm) circle(0.65cm); 
      \draw (90:1) node{$s_0$}; \draw (45:1) node{$s_1$}; 
      \draw (0:1) node{$s_2$}; \draw (-45:1) node{$s_3$}; 
      \draw (-90:1) node{$s_4$}; \draw (-135:1) node{$s_3$}; 
      \draw (-180:1) node{$s_2$}; \draw (135:1) node{$s_1$};
      \node at (0,0) {$[\w_1\!]$};
    \end{scope}
      \begin{scope}[scale=.9,shift={(0,3)}]
        \draw[fill=black] \foreach \x in {1,2,...,5} {(\x,0) circle (1.5pt)};
        \draw {(1,0) node[label=below:$s_0$]{}
          (2,0) node[label=below:$s_1$]{}
          (3,0) node[label=below:$s_2$]{}
          (4,0) node[label=below:$s_3$]{}
          (5,0) node[label=below:$s_4$]{}
          [-] (1,0) -- (5,0)
        };
        \draw (1.5,0) node[label=above:$4$]{};
        \draw (4.5,0) node[label=above:$4$]{};
        \node at (3,1.25) {$\Gamma(\widetilde{C}_4)$};
      \end{scope}
        \begin{scope}[scale=.9,shift={(6,3)}]
        \draw[fill=black] \foreach \y in {0.75,1.5} {(3,\y) circle (1.5pt)};
        \draw[fill=black] \foreach \x in {1,2,...,5} {(\x,0) circle (1.5pt)};
        \draw {(1,0) node[label=below:$s_1$]{}
          (2,0) node[label=below:$s_2$]{}
          (3,0) node[label=below:$s_3$]{}
          (4,0) node[label=below:$s_4$]{}
          (5,0) node[label=below:$s_5$]{}
          (3,0.75) node[label=right:$s_6$]{}
          (3,1.5) node[label=right:$s_0$]{}
          [-] (1,0) -- (5,0)
          [-] (3,0) -- (3,1.5)};
        \node at (1.5,1.25) {$\Gamma(\widetilde{E}_6)$};
        \end{scope}
    \begin{scope}[shift={(12,3.35)},>=stealth',scale=.75]
      \draw[decoration={markings, mark=at position 0.125 with
          {\arrow{<}}}, postaction={decorate}] (0,0) circle(1.4cm);
      \draw[decoration={markings, mark=at position 0.125 with
          {\arrow{<}}}, postaction={decorate}] 
      (0cm,0cm) circle(0.65cm); 
      \draw(90:1)node{$s_1$}; \draw(60:1)node{$s_3$}; \draw(30:1)node{$s_2$}; 
      \draw(0:1)node{$s_4$}; \draw(-30:1)node{$s_3$}; \draw(-60:1)node{$s_5$};
      \draw(-90:1)node{$s_4$}; \draw(-120:1)node{$s_6$}; \draw(-150:1)node{$s_0$}; 
      \draw(-180:1)node{$s_3$}; \draw(150:1)node{$s_2$}; \draw(120:1)node{$s_6$};
      \node at (0,0) {$[\w_2]$};
    \end{scope}
      \end{tikzpicture}
      \caption{CFC elements in affine Coxeter groups, drawn in a circle to highlight the absence of long braids, as well as to motivate concepts such as ``cyclic words'' and ``cyclic commutativity classes'', which toric heaps will allow us to formalize.}
      \label{fig:cfc-affine}
    \end{figure}
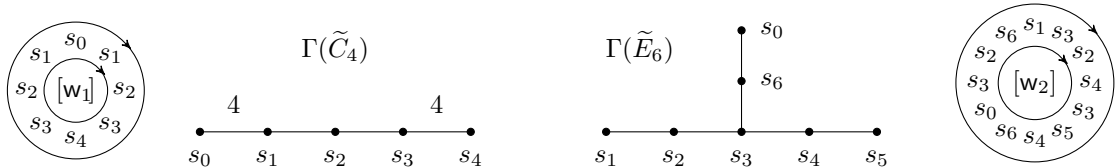

CFC elements were introduced and studied in 2012 by Boothby et al.~\cite{boothby2012cyclically}. They have recently been characterized and enumerated in all affine Coxeter groups, and their generating functions were shown to be rational in all Coxeter groups \cite{petreolle2016generating,petreolle2017characterization}.

\section{Labeled toric posets and toric heaps}\label{sec:toric}

\subsection{Posets and toric posets, geometrically}\label{subsec:toric-posets}

Throughout this section, $\Gamma$ is a Coxeter graph, $G=(V,E)$ is an undirected graph, $\Acyc(G)$ is the set of acyclic orientations of $G$, and $\equiv$ is \emph{toric equivalence}, i.e., the equivalence relation on $\Acyc(G)$ generated by source-to-sink conversions.

A toric poset should be thought of as a cyclic version of a poset. The most concrete way to define them are as toric equivalence classes of $\Acyc(G)$, and we write this as, e.g., $T(G,[\omega])$. However, this has two significant drawbacks: first, it suggests a dependence on the graph $G$, which is a little misleading. Recall that we can similarly define a poset $P=P(G,\omega)$ on a graph, though in actuality, $G$ is almost never uniquely determined. Specifically, there is a unique minimal graph (the Hasse diagram, $\hat{G}^{\Hasse}(P)$), and a unique maximal graph (the transitive closure, $\bar{G}(P)$) on $V$ such that any $G'=(V,E')$ whose edge set $E'$ is between the edge sets of these two extremes (with respect to subset inclusion) will work. Of course, care must also be taken with how to define $\omega'\in\Acyc(G')$ so that $P(G,\omega)=P(G',\omega')$, but that is straightforward -- any shared edges must be oriented the same way. 

The second drawback of the notation $T(G,[\omega])$ is that it obscures the ``more proper'' geometric way to define a toric poset. We will motivate this by revisiting the geometric interpretation of $P(G,\omega)$. In fact, finite posets can be \emph{defined} and developed purely geometrically, as chambers of graphic hyperplane arrangements. For distinct vertices $i$ and $j$ in $V$, let $H_{ij}$ be the hyperplane $x_i=x_j$ in $\R^V$. The graphic arrangement of $G$ is the set $\AAA(G)=\{H_{ij}\mid \{i,j\}\in E\}$. Each point $x=(x_1,\dots,x_n)$ in the complement $\R^V\setmin\AAA(G)$ determines a canonical acyclic orientation $\omega(x)$, by directing the edge $\{i,j\}$ as $i\to j$ if $x_i<x_j$. The fibers of this mapping are the chambers of the $\AAA(G)$, and so this induces a bijection between chambers of $\AAA(G)$ and acyclic orientations of $G$:
  \[
  \xymatrix{\R^V \setmin \AAA(G) \ar@{>>}[dr] \ar[rr]^{\alpha_G} & & \Acyc(G) \\
    & \Chambers \AAA(G) \ar@{.>}[ur] & \\ }
  \]
At this point, one could define a poset to be any subset of $\R^V$ that arises as a chamber of some graphic hyperplane arrangement. This removes the reference to a particular graph in the definition. Given a poset $P$, we write $c(P)$ for the chamber in $\R^V$ determined by $P$. Given a chamber $c$, we write $P(c)$ for the poset determined by $c$.

Though this geometric perspective is a bit superfluous for ordinary posets, it is absolutely necessary for toric posets, where it is not so clear how to pull apart the concept of a toric poset from the underlying graph. The geometric definition of a toric poset arises from the observation that if we first quotient out $\R^V$ by the integer lattice $\Z^V$, then converting a source $x_i$ into a sink corresponds to crossing a coordinate hyperplane $x_i=0$, and this does not change the corresponding connected component in the torus $\R^V\!/\Z^V$. An example of this is shown in Figure~\ref{fig:K_3}.

This leads us to our first of five ``Definition / Theorems''. We use that term because each of them involves a non-trivial statement or equivalence proven in \cite{develin2016toric}, which allows us to take them as the definition in this paper. 

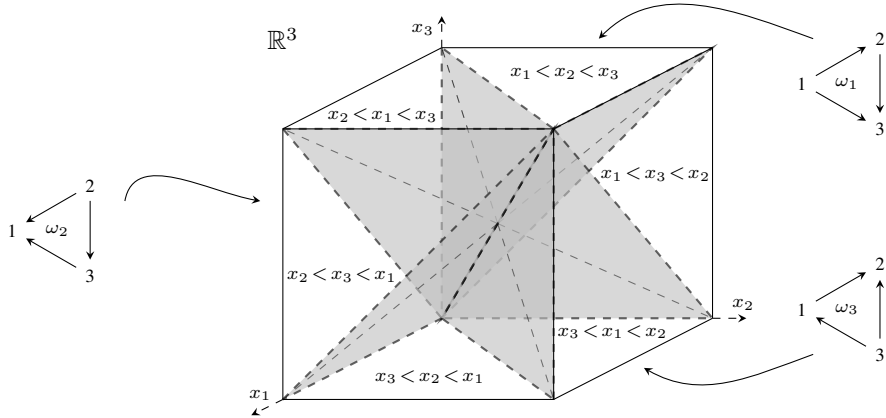
\begin{figure}[!ht]
  \tikzstyle{axis} = [draw, dashed,-stealth]
  \tikzstyle{to} = [draw,-stealth]
  \tikzstyle{v} = [circle, draw, fill=white,inner sep=0pt, 
    minimum size=1mm]  
  \begin{tikzpicture}[scale=0.6]
    \filldraw[thick,dashed,fill=lightgray,opacity=0.6] 
    (3.52,7.8)--(3.52,1.8)--(6,6)--cycle;
    \draw[dashed,opacity=0.7] {(4.76,3.9)--(3.52,7.8)};
    \filldraw[thick,dashed,fill=lightgray,opacity=0.6] 
    (6,6)--(9.52,7.8)--(3.52,1.8)--cycle;
    \draw[dashed,opacity=0.5] {(4.76,3.9)--(9.52,7.8)};
    \filldraw[thick,dashed,fill=lightgray,opacity=0.6] 
    (6,6)--(9.52,1.8)--(3.52,1.8)--cycle;
    \draw[dashed,opacity=0.5] {(4.76,3.9)--(9.52,1.8)};
    \filldraw[thick,dashed,fill=lightgray,opacity=0.6] 
    (0,6)--(6,6)--(3.52,1.8)--cycle;
    \draw[dashed,opacity=0.5] {(4.76,3.9)--(0,6)};
    \filldraw[thick,dashed,fill=lightgray,opacity=0.6] 
    (6,0)--(6,6)--(3.52,1.8)--cycle;
    \draw[dashed,opacity=0.5] {(4.76,3.9)--(6,0)};
    \filldraw[thick,dashed,fill=lightgray,opacity=0.6]
    (0,0)--(6,6)--(3.52,1.8)--cycle;
    \draw[dashed,opacity=0.5] {(4.76,3.9)--(0,0)};
    \draw {(0,0)--(0,6)--(6,6)--(6,0)--(0,0)};
    \draw {(3.52,7.8)--(9.52,7.8)--(9.52,1.8)};
    \draw {(0,6)--(3.52,7.8)}; 
    \draw {(6,0)--(9.52,1.8)}; 
    \draw {(6,6)--(9.52,7.8)};
    \draw (7.3,1.5) node{\scriptsize $x_3\!<\!x_1\!<\!x_2$};
    \draw (8.25,5) node{\scriptsize $x_1\!<\!x_3\!<\!x_2$};
    \draw (3.25,.5) node{\scriptsize $x_3\!<\!x_2\!<\!x_1$};
    \draw (6.25,7.25) node{\scriptsize $x_1\!<\!x_2\!<\!x_3$};
    \draw (1.3,2.75) node{\scriptsize $x_2\!<\!x_3\!<\!x_1$};
    \draw (2.2,6.3) node{\scriptsize $x_2\!<\!x_1\!<\!x_3$};
    \draw [axis] (9.52,1.8) to (10.27,1.8); \node at(10.2,2.1){\scriptsize $x_2$};
    \draw [axis] (3.52,7.8) to (3.52,8.5); \node at(3.1,8.2){\scriptsize $x_3$};
    \draw [axis] (0,0) to (-.7,-.35); \node at(-.5,.1){\scriptsize $x_1$};
    \node at (0,8) {$\R^3$};
    \begin{scope}[shift={(11.5,7)}]
      \node (1a) at (0,0) {\scriptsize 1};
      \node (2a) at (1.73,1) {\scriptsize 2}; 
      \node (3a) at (1.73,-1) {\scriptsize 3}; 
      \draw[to] (1a) to (2a);
      \draw[to] (2a) to (3a);
      \draw[to] (1a) to (3a);
      \path[to,out=160,in=45] (.25,1) to (-4.5,1);
      \node at (1,0) {\scriptsize $\omega_1$};
    \end{scope}
    \begin{scope}[shift={(11.5,2)}]
      \node (1c) at (0,0) {\scriptsize 1};
      \node (2c) at (1.73,1) {\scriptsize 2}; 
      \node (3c) at (1.73,-1) {\scriptsize 3}; 
      \draw[to] (3c) to (2c);
      \draw[to] (3c) to (1c);
      \draw[to] (1c) to (2c);
      \path[to,out=200,in=-30] (.25,-1) to (-3.5,-1.3);
      \node at (1,0) {\scriptsize $\omega_3$};
    \end{scope}
    \begin{scope}[shift={(-6,3.75)}]
      \node (1b) at (0,0) {\scriptsize 1};
      \node (2b) at (1.73,1) {\scriptsize 2}; 
      \node (3b) at (1.73,-1) {\scriptsize 3}; 
      \draw[to] (2b) to (3b);
      \draw[to] (3b) to (1b);
      \draw[to] (2b) to (1b);
      \path[to,out=80,in=170] (2.5,.65) to (5.5,.65);
      \node at (1,0) {\scriptsize $\omega_2$};
    \end{scope}
  \end{tikzpicture}
  \caption{The hyperplane arrangement $\AAA(G)$ of the complete graph $G=K_3$. The toric arrangement $\AAA_{\tor}(G)$ is achieved by identifying opposite sides of the unit cube. Doing this merges the three regions corresponding to the three acyclic orientations shown into one single toric chamber.}\label{fig:K_3}
\end{figure}

\begin{defn-thm}\label{defn-thm:toric-poset}
  A \emph{toric poset} over $G$ is characterized by either:
  \begin{enumerate}[(i)]
  \item an equivalence class $[\omega]$ in $\Acyc(G)/\!\!\equiv$,
  \item a chamber $c$ of the toric hyperplane arrangement $\AAA_{\tor}(G)$
    in $\R^V\!/\Z^V$.
  \end{enumerate}
  We will write $T(G,[\omega])$ to mean the toric poset characterized
  by $[\omega]$ in $\Acyc(G)/\!\!\equiv$. Given a toric poset $T$, we
  write $c(T)$ when we wish to speak of the chamber in $\R^V\!/\Z^V$
  determined by $T$, and given a chamber $c$, we write $T(c)$ to emphasize the
  toric poset determined by it.
\end{defn-thm}

Aside from the definition not depending on a distinguished graph $G$, the second advantage to the geometric perspective is the recurring theme that many standard features of ordinary posets, such as chains, antichains, Hasse diagrams, transitive closure, order ideals, and so on, have natural toric analogues. However, it is usually not clear how these should be defined in terms of an equivalence class of acyclic orientations. Instead, the natural definition often only becomes apparent when one interprets the classical definition geometrically, and then passes to the quotient $\pi\colon\R^V\longto\R^V\!/\Z^V$, as illustrated by the following commutative diagram: \vspace{-1mm}
  \[
  \xymatrix{ \R^V\setmin\AAA(G) \ar[r]^\pi\ar[d]_{\alpha_G} &
    \R^V\!/\Z^V\setmin\AAA_{\tor}(G) \ar[d]^{\bar{\alpha}_G} \\ \Acyc(G)
    \ar@{.>}[r]^{???} & \Acyc(G)/\!\!\equiv}
  \]
Often, this toric analogue then has a natural combinatorial interpretation in terms of directed graphs, which usually ends up being more convenient. A list of these can be found in the Introduction of \cite{macauley2016morphisms}.

For an example of this, consider a non-edge $\{i,j\}\not\in E$. One might ask whether $i$ and $j$ are comparable in $P=P(G,\omega)$. In other words, would adding $\{i,j\}$ to $G$ force its orientation by $\omega$, which happens when one of the two ways to orient it would create a directed cycle? If so, $\{i,j\}$ is \emph{implied by transitivity}, and either $i\leq_P j$ or $j\leq_P i$. Geometrically, this means that the hyperplane $H_{ij}$ is disjoint from the chamber $c(P)$ of $\AAA(G)$. The combinatorial condition is also straightforward: $\{i,j\}$ is implied by transitivity if and only if $i$ and $j$ lie on a common directed path in $\omega$ (i.e., lie on a chain in $P$). 

There is a toric analogue of transitivity, which is easy to state geometrically: the edge $\{i,j\}$ is \emph{implied by toric transitivity} if the toric hyperplane $H_{ij}^{\tor}$ is disjoint from the chamber $c(T(G,[\omega]))$ of $\AAA_{\tor}(G)$. The combinatorial condition of this is less clear due to the absence of a binary relation, but luckily, it has a simple answer, which is basically just adding the word ``toric'' to the ordinary case. Specifically, a \emph{toric directed path} in $\omega$ is a directed path $i_1\to\cdots\to i_k$ such that the edge $i_1\to i_k$ is also present, as shown in Figure~\ref{fig:toric-chain}. It was shown in \cite{develin2016toric} that if $i_1\to\cdots\to i_k$ is a toric directed path in $\omega$, then some cyclic shift is a toric directed path in $\omega'$ for each $\omega'\equiv\omega$. 

 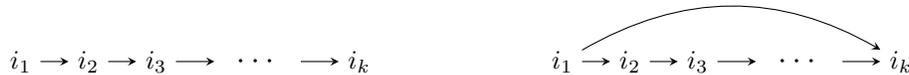
\begin{figure}[!ht]
  \tikzstyle{to} = [draw,-stealth]
  \begin{tikzpicture}[scale=.9]
    \begin{scope}
      \node (i1) at (1,0) {$i_1$};
      \node (i2) at (2,0) {$i_2$};
      \node (i3) at (3,0) {$i_3$};
      \node (i4) at (4,0) {};
      \node (i5) at (5,0) {};
      \node at (4.5,0) {\Large $\cdots$};
      \node (ik) at (6,0) {$i_k$};
      \draw[to] (i1) to (i2);
      \draw[to] (i2) to (i3);
      \draw[to] (i3) to (i4);
      \draw[to] (i5) to (ik);
    \end{scope}
    \begin{scope}[shift={(8,0)}]
      \node (i1) at (1,0) {$i_1$};
      \node (i2) at (2,0) {$i_2$};
      \node (i3) at (3,0) {$i_3$};
      \node (i4) at (4,0) {};
      \node (i5) at (5,0) {};
      \node at (4.5,0) {\Large $\cdots$};
      \node (ik) at (6,0) {$i_k$};
      \draw[to] (i1) to (i2);
      \draw[to] (i2) to (i3);
      \draw[to] (i3) to (i4);
      \draw[to] (i5) to (ik);
      \draw[to,bend left=30] (i1) to (ik); 
    \end{scope}
  \end{tikzpicture}
\caption{A chain in a poset is any subset that lies on a directed path (left). A toric chain in a toric poset is any subset that lies on a toric directed path (right).}\label{fig:toric-chain}
\end{figure}

It is worth noting that just like directed paths in ordinary posets, a two-element toric directed path is an edge, and all vertices are one-element toric directed paths. We will say that the empty set is vacuously a directed path and a toric directed path. 

\begin{defn-thm}
  A non-edge $\{i,j\}$ is \emph{implied by transitivity} in $P(G,\omega)$ if and only if $i$ and $j$ lie on a directed path in $\omega$. 

  A non-edge $\{i,j\}$ is \emph{implied by toric transitivity} in $T(G,[\omega])$ if and only if $i$ and $j$ lie on a toric directed path in $\omega$. 
\end{defn-thm}

For ordinary posets, adding all edges implied by transitivity, in any order, yields the transitive closure, $\bar{G}(P(G,\omega))$. The \emph{toric transitive closure} $\bar{G}^{\tor}(T(G,[\omega]))$ can be defined analogously.

For ordinary posets, removing all unnecessary edges, in any order, yields the Hasse diagram, denoted $\hat{G}^{\Hasse}(P(G,\omega))$. Geometrically, this just means removing all hyperplanes $H_{ij}$ that are disjoint from the chamber $c(P(G,\omega))$. The \emph{toric Hasse diagram} is completely analogous, and denoted $\hat{G}^{\torHasse}(T(G,[\omega]))$. However, the Hasse diagram of $P(G,\omega)$ and the toric Hasse diagram of $T(G,[\omega])$ are generally not the same. To see why, we need the notion of a toric chain.

A key concept behind the aforementioned features of posets is that of a \emph{chain}, which is a totally ordered set. This can be characterized geometrically in terms of the coordinates of the entries of all points in the corresponding chamber, or combinatorially as a subset of vertices lying on a directed path in $\omega$. A toric chain in $T(G,[\omega])$ is a \emph{totally cyclically ordered set}. This can also be characterized geometrically in terms of the coordinates of the entries of all points in the corresponding toric chambers. Luckily, its combinatorial characterization is both simple and analogous to the ordinary poset case. 

\begin{defn-thm}
  A set $C=\{i_1,\dots,i_k\}\subseteq V$ is a:
  \begin{itemize}
  \item \emph{chain} of $P(G,\omega)$ if it lies on a directed path in $\omega$;
  \item \emph{toric chain} of $T(G,[\omega])$ if it lies on a toric directed path in $\omega$.
  \end{itemize}
  Both chains and toric chains are closed under subsets. 
\end{defn-thm}

\begin{ex}\label{ex:4-vertex-graphs}
  Let $L_4$, $C_4$, and $K_4$ be the line, circular, and complete graph on $4$ vertices, respectively. Assume that the vertices are ordered ``naturally,'' i.e., they all contain (at least) the edges $\{1,2\}$, $\{2,3\}$, and $\{3,4\}$. Define $\omega\in\Acyc(L_4)$, $\omega'\in\Acyc(C_4)$, and $\omega''\in\Acyc(K_4)$ so that $\{i,j\}$ is oriented $i\to j$ if $i<j$. Then
  \[
  \begin{array}{l}
  \hat{G}^{\Hasse}(P(K_4,\omega''))=L_4, \vspace{1mm} \\
  \hat{G}^{\torHasse}(T(K_4,[\omega'']))=C_4,
  \end{array}
  \hspace{10mm}
  \begin{array}{l}
    \bar{G}(P(L_4,\omega))=K_4=\bar{G}^{\tor}(T(C_4,[\omega'])), \vspace{1mm} \\
    \bar{G}^{\tor}(T(L_4,[\omega]))=L_4.
  \end{array}
  \]
  Figure~\ref{fig:transitivity} shows a visual of these examples. The solid lines show the edges that make up the Hasse and toric Hasse diagram. The transitive closure and toric transitive closure are given by including the (undirected) dashed edges as well.

  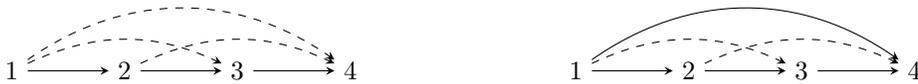
\begin{figure}[!ht]
  \begin{tikzpicture}
    \tikzstyle{to} = [draw,-stealth]
    \begin{scope}[scale=1.5,shift={(0,0)}]
      \node (v1) at (1,0) {$1$};
      \node (v2) at (2,0) {$2$};
      \node (v3) at (3,0) {$3$};
      \node (v4) at (4,0) {$4$};
      \draw[to] (v1) to (v2); \draw[to] (v2) to (v3); \draw[to] (v3) to (v4);
      \draw[to,bend left=25,dashed] (v1) to (v3); 
      \draw[to,bend left=25,dashed] (v2) to (v4);
      \draw[to,bend left=35,dashed] (v1) to (v4); 
    \end{scope}
    \begin{scope}[scale=1.5,shift={(5,0)}]
      \node (v1) at (1,0) {$1$};
      \node (v2) at (2,0) {$2$};
      \node (v3) at (3,0) {$3$};
      \node (v4) at (4,0) {$4$};
      \draw[to] (v1) to (v2); \draw[to] (v2) to (v3); \draw[to] (v3) to (v4);
      \draw[to,bend left=25,dashed] (v1) to (v3); 
      \draw[to,bend left=25,dashed] (v2) to (v4);
      \draw[to,bend left=35] (v1) to (v4); 
    \end{scope}
  \end{tikzpicture}
  \caption{\emph{Left}: The Hasse diagram of $P(L_4,\omega)$ consists of the solid edges (undirected). The dashed edges are additionally implied by transitivity. \emph{Right}: The toric Hasse diagram of $T(K_4,[\omega''])$ consists of the solid edges (undirected). The dashed edges are implied by toric transitivity. See Example~\ref{ex:4-vertex-graphs} for details.}\label{fig:transitivity}
  \end{figure}

\end{ex}

The last concepts that we need the toric analogue of are extensions and total orders. A \emph{total order} is a poset $P(K_V,\omega)$, where $K_V$ is the complete graph. Geometrically, this corresponds to a chamber of $\AAA(K_V)$. Intuitively, a total toric order is a toric poset that is totally cyclically ordered. 

\begin{defn-thm}
  A \emph{total toric order} is characterized by either:
  \begin{enumerate}[(i)]
  \item a toric poset $T(K_V,[\omega])$,
  \item a chamber of $\AAA_{\tor}(K_V)$. 
  \end{enumerate}
\end{defn-thm} 

An \emph{extension} $P'$ of a poset $P$ is characterized combinatorially by adding relations (or edges to $(G,\omega)$), or geometrically by $c(P')\subseteq c(P)$ in $\R^V$ (the result of adding hyperplanes to $\AAA(G)$). Moreover, $P'$ is a \emph{linear extension} of $P$ if it is an extension and a total order. 

\begin{defn-thm}
  Let $T=T(G,[\omega])$ be a toric poset, and assume without loss of generality that $G=(V,E)$ is its toric Hasse diagram. A \emph{toric extension} $T'$ of a toric poset $T$ is characterized by either:
  \begin{enumerate}[(i)]
  \item $c(T')\subseteq c(T)$ in $\R^V\!/\Z^V$,
  \item $T'=T(G',[\omega'])$, where $G'=(V,E')$, $E\subseteq E'$, and all edges in $E\cap E'$ are oriented the same way by $\omega$ and $\omega'$. 
  \end{enumerate}
  If $T'$ is a toric extension of $T$ and a total toric order, then it is a \emph{total toric extension}.
\end{defn-thm}

There are $|\Acyc(K_V)|=T_{K_V}(2,0)=n!$ total orders on a size-$n$ set $V$, where $T_{K_V}$ is the Tutte polynomial. In contrast, there are $|\Acyc(K_V)/\!\!\equiv\!\!|=T_{K_V}(1,0)=(n-1)!$ total toric orders on $V$. Each one is indexed by a cyclic equivalence class of permutations
\[
  [\w]=[w_1\cdots w_n]:=\left\{w_1w_2\cdots w_{n-1}w_n,\quad w_2\cdots w_{n-1}w_nw_1,\quad\dots\quad ,w_nw_1w_2\cdots w_{n-1}\right\}.
  \]
  Geometrically, these correspond to the $(n-1)!$ toric chambers of $\AAA_{\tor}(K_V)$.

\subsection{Toric heaps}

Before we formalize cyclic reducibility, it is worth pausing to return to our familiar example for guiding intuition. Recall (see Definition~\ref{defn:toric-heap}) that a toric heap has a labeling map $\tau\colon T\to\Gamma$ such that the inverse image of every vertex and every edge is a toric chain. Moreover, $T$ must be minimal with respect to these chains. 

\begin{runningcont}
Recall that the element $w=s_3s_1s_2s_1s_2=s_3s_2s_1s_2s_1$ in $W(B_2)$ has two distinct heaps, shown in Figure~\ref{fig:b2}. Note that in each heap, the edge chain $\phi^{-1}(\{1,2\})$ forms a length-$4$ directed path in the Hasse diagram, and the size-$3$ edge chain $\phi^{-1}(\{2,3\})$ lies on a length-$4$ directed path. In the toric heap, these directed paths become toric directed paths, and so the toric Hasse diagram gets two additional edges. This is shown in Figure~\ref{fig:b2-toric-heap}; the curved edges are these two additional ones.
\end{runningcont}

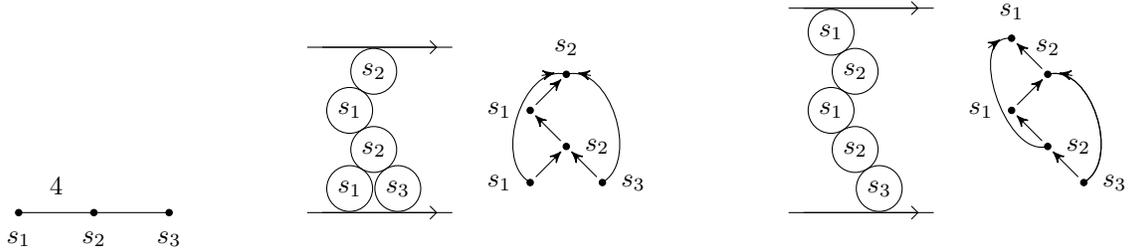
\begin{figure}[!ht]
  \begin{tikzpicture}[shorten >= 1pt, shorten <= 1pt]
    \begin{scope}[scale=.8,>=stealth',shift={(-1.5,-.5)}]
      \draw[fill=black] {(0,0) circle (1.5pt) node[label=below:$s_1$]{}};
      \draw[fill=black] {(1.25,0) circle (1.5pt) node[label=below:$s_2$]{}};
      \draw[fill=black] {(2.5,0) circle (1.5pt) node[label=below:$s_3$]{}};
      \draw {(0.625,0) node[label=above:$4$]{}};
      \draw {[-] (0,0)--(1.25,0) [-] }; \draw {[-] (1.25,0)--(2.5,0) [-] };
    \end{scope}
    \begin{scope}[scale=.8,>=angle 90,shift={(4,-.5)}]
      \draw[->] {(-.5,0) to (1.5,0)};
      \draw {(-.75,0) to (1.75,0)};
      \draw {(0,0.4) circle (0.15in) node[]{$s_1$}};
      \draw {(.4,1.05) circle (0.15in) node[]{$s_2$}};
      \draw {(0,1.7) circle (0.15in) node[]{$s_1$}};
      \draw {(.4,2.35) circle (0.15in) node[]{$s_2$}};
      \draw {(.8,0.4) circle (0.15in) node[]{$s_3$}};
      \draw[->] {(-.5,2.75) to (1.5,2.75)};
      \draw {(-.75,2.75) to (1.75,2.75)};
    \end{scope}
    \begin{scope}[scale=.8,>=stealth',shift={(7,0)}]
      \draw[fill=black] {(0,0) circle (1.5pt) node[label=left:$s_1$]{}};
      \draw[fill=black] {(0,1.2) circle (1.5pt) node[label=left:$s_1$]{}};
      \draw[fill=black] {(.6,0.6) circle (1.5pt) node[label=right:$s_2$]{}};
      \draw[fill=black] {(.6,1.8) circle (1.5pt) node[label=above:$s_2$]{}};
      \draw[fill=black] {(1.2,0) circle (1.5pt) node[label=right:$s_3$]{}};
      \draw[bigarrow] (0,0) -- (.6,0.6);
      \draw[bigarrow] (.6,0.6) -- (0,1.2);
      \draw[bigarrow] (0,1.2) -- (.6,1.8);
      \draw[bigarrow] (1.2,0) -- (.6,0.6);
      \draw[bigarrow] (.32,1.805) -- (.42,1.825); 
      \draw[bigarrow] (.88,1.805) -- (.78,1.825); 
      \draw (0,0) edge[out=145,in=170] (0.6,1.8);
      \draw (1.2,0) edge[out=35,in=10] (0.6,1.8);
    \end{scope}
    \begin{scope}[scale=.8,>=angle 90,shift={(12,-.5)}]
      \draw[->] {(-.5,0) to (1.5,0)};\draw {(-.75,0) to (1.75,0)};
      \draw {(.4,1.05) circle (0.15in) node[]{$s_2$}};
      \draw {(0,1.7) circle (0.15in) node[]{$s_1$}};
      \draw {(.4,2.35) circle (0.15in) node[]{$s_2$}};
      \draw {(.8,0.4) circle (0.15in) node[]{$s_3$}};
      \draw {(0,3.0) circle (0.15in) node[]{$s_1$}};
      \draw[->] {(-.5,3.4) to (1.5,3.4)};\draw {(-.75,3.4) to (1.75,3.4)};
    \end{scope}
    \begin{scope}[scale=.8,>=stealth',shift={(15,0)}]
      \draw[fill=black] {(0,1.2) circle (1.5pt) node[label=left:$s_1$]{}};
      \draw[fill=black] {(.6,.6) circle (1.5pt) node[label=right:$s_2$]{}};
      \draw[fill=black] {(.6,1.8) circle (1.5pt) node[label=above:$s_2$]{}};
      \draw[fill=black] {(1.2,0) circle (1.5pt) node[label=right:$s_3$]{}};
      \draw[fill=black] {(0,2.4) circle (1.5pt) node[label=above:$s_1$]{}};
      \draw (.6,.6) edge[out=190,in=180] (0,2.4);
       \draw (1.2,0) edge[out=35,in=10] (.6,1.8);
      \draw[bigarrow] (.6,0.6) -- (0,1.2);
      \draw[bigarrow] (0,1.2) -- (.6,1.8);
      \draw[bigarrow] (1.2,0) -- (.6,0.6);
      \draw[bigarrow] (.6,1.8) -- (0,2.4);
      \draw[bigarrow] (.32-.6,1.805+.51) -- (.42-.6,1.825+.56); 
      \draw[bigarrow] (.88,1.805) -- (.78,1.825); 
      \draw (1.2,0) edge[out=35,in=10] (0.6,1.8);
    \end{scope}
  \end{tikzpicture}
  \caption{Though the element $w=s_3s_1s_2s_1s_2=s_3s_2s_1s_2s_1$ in the Coxeter group $W(B_2)$ has two distinct heaps (see Figure~\ref{fig:b2}), one for each commutativity class, both of these give rise to the same toric heap. Intuitively, one can think of this as identifying the top with the bottom of a stack of balls, making it cylindrical. The undirected versions of the digraphs shown are the toric Hasse diagrams of the toric heap posets.}\label{fig:b2-toric-heap}
\end{figure}

Note that the two toric heaps shown in Figure~\ref{fig:b2-toric-heap} are actually the same, because the Hasse diagrams of the toric heap posets differ by a single source-to-sink conversion. Algebraically, this is because the word $s_3s_1s_2s_1s_2$ can be transformed into $s_3s_2s_1s_2s_1$ two ways: by a long braid relation $s_1s_2s_1s_2\mapsto s_2s_1s_2s_1$, \emph{or} by a sequence of short braid relations and cyclic shifts. When we formalize this, we will say that the cyclic words $[\w]=[s_3s_1s_2s_1s_2]$ and $[\u]=[s_3s_2s_1s_2s_1]$ are in the same \emph{cyclic commutativity class}. This is shown in Figure~\ref{fig:faux-cfc}.

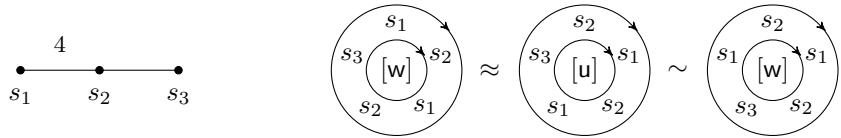
\begin{figure}[!ht]
  \begin{tikzpicture}[>=stealth', shorten >= 1pt, shorten <= 1pt]
    \begin{scope}[shift={(-5,0)},scale=.7] 
      \draw[fill=black] {(0,0) circle (2pt) node[label=below:$s_1$]{}};
      \draw[fill=black] {(1.5,0) circle (2pt) node[label=below:$s_2$]{}};
      \draw[fill=black] {(3,0) circle (2pt) node[label=below:$s_3$]{}};
      \draw {(0.75,0) node[label=above:{\small $4$}]{}};
      \draw {[-] (0,0)--(1.5,0) [-] }; \draw {[-] (1.5,0)--(3,0) [-] };
    \end{scope}
    \begin{scope}[shift={(0,0)},scale=.62]
      \draw[decoration={markings, mark=at position 0.125 with
          {\arrow{<}}}, postaction={decorate}] (0,0) circle(1.4cm);
      \draw[decoration={markings, mark=at position 0.125 with
          {\arrow{<}}}, postaction={decorate}] 
      (0cm,0cm) circle(0.65cm); 
      \draw (90:1) node{$s_1$}; 
      \draw (18:1) node{$s_2$}; 
      \draw (-52:1) node{$s_1$}; 
      \draw (-124:1) node{$s_2$};
      \draw (162:1) node{$s_3$}; 
      \node at (0,0) {$[\w]$};
      \node at (2,0) {$\approx$};
    \end{scope}
    \begin{scope}[shift={(2.5,0)},scale=.62]
      \draw[decoration={markings, mark=at position 0.125 with
          {\arrow{<}}}, postaction={decorate}] (0,0) circle(1.4cm);
      \draw[decoration={markings, mark=at position 0.125 with
          {\arrow{<}}}, postaction={decorate}] 
      (0cm,0cm) circle(0.65cm); 
      \draw (90:1) node{$s_2$}; 
      \draw (18:1) node{$s_1$}; 
      \draw (-52:1) node{$s_2$}; 
      \draw (-124:1) node{$s_1$};
      \draw (162:1) node{$s_3$}; 
      \node at (0,0) {$[\u]$};
      \node at (2,0) {$\sim$};
    \end{scope}
    \begin{scope}[shift={(5,0)},scale=.62]
      \draw[decoration={markings, mark=at position 0.125 with
          {\arrow{<}}}, postaction={decorate}] (0,0) circle(1.4cm);
      \draw[decoration={markings, mark=at position 0.125 with
          {\arrow{<}}}, postaction={decorate}] 
      (0cm,0cm) circle(0.65cm); 
      \draw (90:1) node{$s_2$}; 
      \draw (18:1) node{$s_1$}; 
      \draw (-52:1) node{$s_2$}; 
      \draw (-124:1) node{$s_3$};
      \draw (162:1) node{$s_1$}; 
      \node at (0,0) {$[\w]$};
    \end{scope}
  \end{tikzpicture}
  \caption{A non-CFC element with only one cyclic commutativity
    class. The cyclic word $[\u]$ shown in the middle differs from
    $[\w]$ via a long braid relation $s_2s_1s_2s_1\mapsto
    s_1s_2s_1s_2$, but also via a short braid relation, $s_1s_3\mapsto
    s_3s_1$. This will be formalized in
    Section~\ref{sec:cyclic-reducibility}; this type of element will
    be called \emph{torically fully commutative}
    (TFC).}\label{fig:faux-cfc}
\end{figure}

We can define structure-preserving maps between toric heaps using commutative diagrams in the same manner that we did between ordinary heaps in Definition~\ref{defn:heap-morphism}. This leads to the category $\TORHEAP$ of toric heaps and toric heap morphisms. To properly define a toric heap morphism, we need the definition of a general toric poset morphism. However, we will omit this and instead refer the reader to \cite{macauley2016morphisms}, because in this paper, the only morphisms we will use are extensions and isomorphisms. The former has been introduced and the latter is elementary: if there is a graph isomorphism $G\mapsto G'$ carrying the acyclic orientation $\omega\mapsto\omega'$, then the toric posets $T(G,[\omega])$ and $T(G',[\omega'])$ are isomorphic. Alternatively, it is straightforward to define this geometrically.

\begin{defn} \label{defn:toric-heap-morphism}
  A \emph{morphism} from one toric heap $(T,\Gamma,\tau)$ to another $(T',\Gamma',\tau')$ is a pair $(\sigma,\gamma)$, where $\sigma\colon T\to T'$ is a toric poset morphism and $\gamma\colon\Gamma\to\Gamma'$ is a graph homomorphism, satisfying $\gamma\circ\tau=\tau'\circ\sigma$:
  \[
  \xymatrix{ T \ar[rr]^\tau \ar[d]_\sigma && \Gamma \ar[d]^\gamma \\ T'\ar[rr]_{\tau'} && \Gamma'}
  \]
\end{defn}

As before, if $\sigma$ and $\gamma$ are both bijective, then the two toric heaps are \emph{isomorphic}. The concept of a toric subheap is not as natural as an ordinary subheap, because the concept of a cyclic subword of a cyclic word is not as natural as subwords of regular words. 

The concept of an extension naturally carries through from heaps to toric heaps. Recall that this is needed to properly formalize the minimality condition in Definition~\ref{defn:toric-heap}(iii). We often drop the word ``toric'' for brevity, i.e., there is no difference between an extension and a toric extension of a toric heap.

\begin{defn}
  If $(T',\Gamma',\tau')$ is the image of a morphism from a toric heap $(T,\Gamma,\tau)$, where $\sigma\colon T\to T'$ is an extension, and $\gamma\colon\Gamma\to\Gamma'$ is an edge-inclusion, then we say that it is a \emph{(toric) extension} of $(T,\Gamma,\tau)$. Moreover, it is a \emph{total (toric) extension} of toric heaps if $\sigma$ is a total toric extension of toric posets. 
\end{defn}

As we did with heaps, if we want to only consider toric heaps over a fixed graph $\Gamma$, we can define the category $\TORHEAP(\Gamma)$. However, we will generally avoid such language here, and save a thorough categorical treatment of heaps and toric heaps for a future paper. 

Now that we have laid out the fundamentals for cyclic reducibility in Coxeter groups, we will develop a framework to describe it using a cyclic analogue of a heap. The basic idea is to take our ``ball stack'' heap of pieces and wrap it in a cylinder by identifying the top with the bottom of the picture. Obviously, this idea in such a simplistic form is well suited for line graphs. In her 2014 masters thesis \cite{fox2014conjugacy}, B.~Fox proposed this idea in type $A$ Coxeter groups (though it works equally well as long as $\Gamma$ is a line graph), calling it a ``cylindrical heap.'' She acknowledged that in doing this, one loses the natural poset structure. In a 2017 paper, M.~P\'etr\'eolle defined an operation on heaps he called the ``cylindric closure'' \cite{petreolle2017characterization}. He basically turned each maximal edge chain and vertex chain into a cyclically ordered set by declaring for each vertex chain and each edge chain, the minimal element $a$ and the maximal element $z$ are related by $z\prec_ca$. This relation extends the partial order, but since antisymmetry is immediately destroyed, it is not a poset. For example, the relation between the elements in the toric directed path in Figure~\ref{fig:toric-chain} would be
\[
i_1\prec_c i_2\prec_c\cdots\prec_c i_k\prec_c i_1.
\]
In some sense, P\'etr\'eolle uses this relation $\prec_c$ as an effective ``hack'' to prove some beautiful results. Namely, he characterizes CFC elements in terms of pattern-avoidance of these cylindrical closures, and uses this to enumerate the CFC elements in all affine types. In the next section, we will formally define and develop the mathematical structure that lies behind the scenes here, and certainly elsewhere in combinatorics.

\section{Cyclic reducibility in Coxeter groups}\label{sec:cyclic-reducibility}

Throughout, let $(W,S)$ be a fixed Coxeter system with Coxeter graph $\Gamma$. As before, given words, e.g., $\u,\w,\w'$ in $S^*$, we will denote the corresponding group elements by $u,w,w'$ in $W$. At times, we can even go the other direction. Specifically, when given an element $w\in W$, we may write $\w$ to mean ``\emph{an arbitrary reduced word for $w$}.''

\subsection{Cyclic words and commutativity classes}

\begin{defn}\label{defn:cyclically-reduced}
A word $\w\in S^*$ is \emph{cyclically reduced} if every cyclic shift of it is a reduced word for some element in $W$. A group element $w\in W$ is cyclically reduced if every reduced word for $w$ is cyclically reduced. 

A word $\w\in S^*$ is \emph{torically reduced} if it remains reduced under any sequence of cyclic shifts and/or braids. A group element $w\in W$ is torically reduced if any (equivalently, every) reduced word for $w$ is torically reduced. 
\end{defn}

It is clear that torically reduced implies cyclically reduced, for both words and elements. However, the converse fails, as shown by the following example.

\begin{ex}\label{ex:3212}
The element $w=s_3s_2s_1s_2\in W(B_2)$ is cyclically reduced because it has only one reduced word, and every cyclic shift of it is reduced. However, it is not torically reduced because its cyclic shift $\u=s_2s_3s_2s_1\approx s_3s_2s_3s_1$, and this latter word is not cyclically reduced. This also shows how a word $\u\in S^*$ can be cyclically reduced despite the group element $u\in W(B_2)$ failing to be cyclically reduced. These examples are illustrated in Figure~\ref{fig:3212}.
\begin{figure}[!ht]
  \[
  \begin{tikzpicture}[scale=0.55, >=stealth', shorten >= 0pt,]
    \begin{scope}[shift={(-9,0)},scale=1.3] 
      \draw[fill=black] {(0,0) circle (2pt) node[label=below:$s_1$]{}};
      \draw[fill=black] {(1.5,0) circle (2pt) node[label=below:$s_2$]{}};
      \draw[fill=black] {(3,0) circle (2pt) node[label=below:$s_3$]{}};
      \draw {(0.75,0) node[label=above:{\small $4$}]{}};
      \draw {[-] (0,0)--(1.5,0) [-] }; \draw {[-] (1.5,0)--(3,0) [-] };
    \end{scope}
    \begin{scope}[shift={(0,0)}]
      \draw[decoration={markings, mark=at position 0.125 with
          {\arrow{<}}}, postaction={decorate}] (0,0) circle(1.5cm);
      \draw[decoration={markings, mark=at position 0.125 with
          {\arrow{<}}}, postaction={decorate}] 
      (0cm,0cm) circle(0.7cm); \draw (0,1.1)
      node{$s_3$}; \draw (1.1,0)
      node{$s_2$}; \draw (0,-1.1)
      node{$s_1$}; \draw (-1.1,0) node{$s_2$};
      \draw (0,0) node{$[\w]$}; 
      \draw (2.5, 0) node{$\approx$};
    \end{scope}
    \begin{scope}[shift={(5,0)}]
      \draw[decoration={markings, mark=at position 0.125 with
          {\arrow{<}}}, postaction={decorate}] (0,0) circle(1.5cm);
      \draw[decoration={markings, mark=at position 0.125 with
          {\arrow{<}}}, postaction={decorate}] 
      (0cm,0cm) circle(0.7cm); 
      \draw (0,1.1) node{$s_2$}; \draw (1.1,0) node{$s_3$}; 
      \draw (0,-1.1) node{$s_1$}; \draw (-1.1,0) node{$s_3$};
      \draw (2.5, 0) node{$\sim$};
    \end{scope}
    \begin{scope}[shift={(10,0)}]
      \draw[decoration={markings, mark=at position 0.125 with
          {\arrow{<}}}, postaction={decorate}] (0,0) circle(1.5cm);
      \draw[decoration={markings, mark=at position 0.125 with
          {\arrow{<}}}, postaction={decorate}] 
      (0cm,0cm) circle(0.7cm); 
      \draw (0,1.1) node{$s_2$}; \draw (1.1,0) node{$s_1$}; 
      \draw (0,-1.1) node{$s_3$}; \draw (-1.1,0) node{$s_3$};
    \end{scope}
  \end{tikzpicture}
  \]
  \caption{The element $w=s_3s_2s_1s_2$ in $W(B_2)$ is cyclically reduced but not torically reduced.}\label{fig:3212}
  \end{figure}
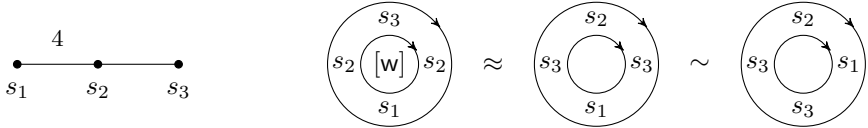
\end{ex}

At this point, we will formalize the notion of circular words, that we have been seeing in Section \ref{subsec:coxeter-elements} and in Figures \ref{fig:cfc-affine}, \ref{fig:faux-cfc}, and \ref{fig:3212}.

\begin{defn}
  Let $\w=s_{x_1}\cdots s_{x_m}$ be a word in $S^*$. The
  \emph{cyclic word} or \emph{cyclic expression} containing $\w$ is
  the equivalence class
  \[
    [\w]=[s_{x_1}\cdots s_{x_m}]:=\left\{s_{x_1}s_{x_2}\cdots s_{x_{m-1}}s_{x_m},\quad s_{x_2}\cdots s_{x_{m-1}}s_{x_m}s_{x_1},\quad\dots\quad ,s_{x_m}s_{x_1}s_{x_2}\cdots s_{x_{m-1}}\right\}.
  \]
  If $\w$ is cyclically reduced, then we say that the cyclic word $[\w]$ is reduced, and vice-versa. We say that $[\w]$ is \emph{torically reduced} if $\w$ is torically reduced. 
\end{defn}  

Let $\Cyc(W,S^*)$ denote the set of all cyclic words and let $\RRR(\Cyc(W,S^*))$ be the set of all reduced cyclic words. Write $\RRR_{\tor}(W,S^*)$ and $\RRR_{\tor}(\Cyc(W,S^*))$ for the set of torically reduced words and cyclic words, respectively. 

A word is a \emph{subword} of $[\w]$ if it appears as a (consecutive) subword of some cyclic shift of $\w$. Loosely speaking, we say that $[\w]$ and $[\u]$ differ by a braid if one can be converted into the other by replacing a subword $\<s,t\>_{m(s,t)}$ with $\<t,s\>_{m(s,t)}$. The formal definition follows.

\begin{defn}\label{defn:cyclic-braids}
Two cyclic words $[\w]$ and $[\u]$ \emph{differ by a braid} if there is some $\w'\in[\w]$ and $\u'\in[\u]$ such that $\w'$ can be converted into $\u'$ by replacing a subword of the form $\<s,t\>_{m(s,t)}$ with $\<t,s\>_{m(s,t)}$ for some $s\neq t$. 
\end{defn}

 The equivalence relations $\sim$ and $\approx$, generated by short braids and all braids, respectively, each define natural equivalence classes on the set $\RRR(W,S^*)$ of reduced words. Elements of $\RRR(W,S^*)/\!\!\sim$ are commutativity classes, and by Matsumoto's theorem, elements of $\RRR(W,S^*)/\!\!\approx$ are in 1--1 correspondence with the group elements of $W$. Since we are looking for cyclic analogues of classical results, it is natural to look at the equivalence classes of the set $\RRR_{\tor}(\Cyc(W,S^*))$ of torically reduced words defined by these relations. 

\begin{defn}\label{defn:cyclic-commutativity-class}
  Let $\w$ be torically reduced. The \emph{cyclic commutativity classes} of $[\w]$ and of $\w$ are defined as 
  \[
  \CCC_{\tor}([\w])=\{[\u]:[\u]\sim[\w]\},\qquad\CCC_{\tor}(\w)=\{\u:[\u]\in\CCC_{\tor}([\w])\}.
  \]
\end{defn}

\begin{defn}\label{defn:toric-equivalence-class}
  Let $\w$ be torically reduced. The \emph{toric equivalence classes} containing $[\w]$ and $\w$ are
 \[ 
 \RRR_{\tor}([\w])=\{[\u]:[\u]\approx[\w]\},\qquad
 \RRR_{\tor}(\w)=\{\u:[\u]\in\RRR_{\tor}([\w])\}.
 \]
 The toric equivalence class of a torically reduced element $w\in W$,
 denoted $[w]$, is defined as
  \[
    [w]=\{u\in W : \u\in\RRR_{\tor}(\w)\}.
  \]
  We write $[u]\approx[w]$ whenever $u\in[w]$. 
\end{defn}

By Matsumoto's theorem, it is well-founded to define $\RRR_{\tor}([w]):=\RRR_{\tor}([\w])$ and $\RRR_{\tor}(w):=\RRR_{\tor}(\w)$. Since $\sim$ is coarser than $\approx$, the set $\RRR_{\tor}([w])$ consists of \emph{at least} the cyclic commutativity class of $[\w]$, and potentially more. In other words, for every torically reduced $w\in W$, we have a unique decomposition
\begin{equation}\label{eqn:cyclic-decomposition}
\RRR_{\tor}([w]) = \mathcal{C}_{\tor}([\w_1])\cup\dots\cup\mathcal{C}_{\tor}([\w_N])
\end{equation}
of its torically equivalent cyclic words into cyclic commutativity classes, where each $[\w_i]\approx[\w]$, and each $\mathcal{C}_{\tor}([\w_i])$ is a union of cyclic words. We also have a similar decomposition of torically reduced words,
\begin{equation}\label{eqn:cyclic-decomposition2}
\RRR_{\tor}(w) = \mathcal{C}_{\tor}(\w_1)\cup\dots\cup\mathcal{C}_{\tor}(\w_N).
\end{equation}

It is clear that $[w]$ is contained in the conjugacy class $\cl_W(w)$. It is an open question as to when $\cl_W(w)$ contains torically reduced elements not in $[w]$. This will be discussed more in Section~\ref{sec:conjugacy}.

\begin{ex}\label{ex:3eq-classes}
The element $w=s_2s_0s_1s_0$ is torically reduced in the affine
Coxeter group $W(\wt{A}_2)$, and $\RRR_{\tor}([w])$ contains three
cyclic commutativity classes, each containing one cyclic word: $\mathcal{C}_{\tor}([s_2s_0s_1s_0])=\{[s_2s_0s_1s_0]\}$, $\mathcal{C}_{\tor}([s_2s_1s_0s_1])=\{[s_2s_1s_0s_1]\}$, and $\mathcal{C}_{\tor}([s_1s_2s_0s_2])=\{[s_1s_2s_0s_2]\}$, shown below.
\begin{center}
  \begin{tikzpicture}[>=stealth', shorten >= -5pt, shorten <= -5pt]
    \begin{scope}[scale=.7,shift={(-6.5,-1)}]
     \node[label=left:$s_1$] (s1) at (0-.2,0) {$\tiny\bullet$};
     \node[label=right:$s_2$] (s2) at (1.4+.2,0) {$\tiny\bullet$};
     \node[label=above:$s_0$] (s0) at (.75,1.4) {$\tiny\bullet$};
     \draw (s0) -- (s1); \draw (s1) to (s2); \draw (s0) to (s2);
    \end{scope}
    \begin{scope}[scale=.55,shift={(0,0)}]
      \draw[decoration={markings, mark=at position 0.125 with
          {\arrow{<}}}, postaction={decorate}] (0,0) circle(1.5cm);
      \draw[decoration={markings, mark=at position 0.125 with
          {\arrow{<}}}, postaction={decorate}] 
      (0cm,0cm) circle(0.7cm); \draw (0,1.1)
      node{$s_2$}; \draw (1.1,0)
      node{$s_0$}; \draw (0,-1.1)
      node{$s_1$}; \draw (-1.1,0) node{$s_0$};
      \draw (2.5, 0) node{$\approx$};
    \end{scope}
    \begin{scope}[scale=.55,shift={(5,0)}]
      \draw[decoration={markings, mark=at position 0.125 with
          {\arrow{<}}}, postaction={decorate}] (0,0) circle(1.5cm);
      \draw[decoration={markings, mark=at position 0.125 with
          {\arrow{<}}}, postaction={decorate}] 
      (0cm,0cm) circle(0.7cm); \draw (0,1.1)
      node{$s_2$}; \draw (1.1,0)
      node{$s_1$}; \draw (0,-1.1)
      node{$s_0$}; \draw (-1.1,0) node{$s_1$};
      \draw (2.5, 0) node{$\approx$};
    \end{scope}
    \begin{scope}[scale=.55,shift={(10,0)}]
      \draw[decoration={markings, mark=at position 0.125 with
          {\arrow{<}}}, postaction={decorate}] (0,0) circle(1.5cm);
      \draw[decoration={markings, mark=at position 0.125 with
          {\arrow{<}}}, postaction={decorate}] 
      (0cm,0cm) circle(0.7cm); \draw (0,1.1)
      node{$s_1$}; \draw (1.1,0)
      node{$s_2$}; \draw (0,-1.1)
      node{$s_0$}; \draw (-1.1,0) node{$s_2$};
    \end{scope}
  \end{tikzpicture}
\end{center}
It is clear that all three cyclic words above lie in different cyclic commutativity classes, because they have a different multiset of generators that appear in them. Additionally, each cyclic commutativity class contains only one cyclic word because none of the generators pairwise commute. The set $\RRR_{\tor}(w)$ contains $12$ torically reduced words: four from each of the three cyclic commutativity classes. The set $[w]$ consists of the $6$ distinct group elements in $W(\widetilde{A}_2)$ defined by these $12$ words -- each one has exactly two reduced expressions for it.
\end{ex}

An interesting difference between reducibility and cyclic reducibility lies in the fact that in the cyclic case, the presence of long braid relations does not necessarily imply multiple (cyclic) commutativity classes. Specifically, $w\in W$ is FC if and only if $\RRR(w)$ contains only one commutativity class. Our running example shows how this fails in the cyclic case.  

\begin{runningcont}
  The element $w=s_3s_1s_2s_1s_2$ in $W(B_2)$ (see Figure~\ref{fig:b2-toric-heap}) is not CFC. However, $\RRR_{\tor}([w])$ contains two cyclic words, $[s_1s_2s_1s_2s_3]$ and $[s_2s_1s_2s_1s_3]$, but only one cyclic commutativity class, as shown in Figure~\ref{fig:faux-cfc}. The set $\RRR_{\tor}(w)$ consists of the ten reduced words that lie in one of these two equivalence classes. These reduced words describe four distinct group elements: 
\[
\begin{array}{ll}
s_1s_2s_1s_2s_3\approx s_2s_1s_2s_1s_3\sim s_2s_1s_2s_3s_1, \hspace{10mm} & s_1s_2s_3s_1s_2\sim s_1s_2s_1s_3s_2, \vspace{1mm} \\
s_1s_3s_2s_1s_2\sim s_3s_1s_2s_1s_2\approx s_3s_2s_1s_2s_1, & s_2s_1s_3s_2s_1\sim s_2s_3s_1s_2s_1,
\end{array}
\]
and these comprise the set $[w]$ of torically equivalent elements to $w$. 
\end{runningcont}

\begin{defn}\label{defn:faux-cfc}
  A torically reduced element with a unique cyclic commutativity class is called \emph{torically fully commutative} (TFC). If it is additionally not CFC, then we call it \emph{faux CFC}.
\end{defn}

The element $w=s_3s_1s_2s_1s_2$ in $W(B_2)$ from Running
Example~\ref{run:b2} is faux CFC. We will return to the TFC and faux
CFC elements in Section~\ref{sec:faux-cfc}.

\subsection{Toric heaps in Coxeter groups}

Just like how a word $\w\in S^*$ gives rise to a canonical heap (Definition \ref{defn:heap-coxeter}), it also gives rise to a canonical toric heap. With ordinary heaps, we are mainly only concerned with reduced words. Similarly, with toric heaps, we only need to focus on torically reduced words. 

Though many ordinary poset features have natural toric analogues, there is one crucial property that fails to carry over, and as a result, the toric heap from a word $\w\in S^*$ must be carefully defined. Every finite poset is completely determined by knowing (i) its set (a simplicial complex) of chains, and (ii) the total order within each chain. Among other things, this gives rise to the \emph{order complex} of a poset, which is fundamental to poset topology \cite{wachs2007poset}. Unfortunately, there does not appear to be a toric analogue of this property, and hence, no sort of toric order complex either. To see why, consider the toric posets $T=T(C_5,[\omega])$ and $T'=T(C_5,[\omega'])$, where $C_5$ is a 5-cycle, and the acyclic orientations $\omega$ and $\omega'$ are shown below:
\[
  \begin{tikzpicture}[scale=1, >=stealth'] 
    \begin{scope}
      \node (1) at (90:1) {$1$}; 
      \node (2) at (18:1) {$2$}; 
      \node (3) at (-54:1) {$3$};
      \node (4) at (-126:1) {$4$}; 
      \node (5) at (162:1) {$5$};
      \draw[ba3] (1) to (2); \draw[ba3] (2) to (3); \draw[ba3] (3) to (4);
      \draw[ba3] (5) to (4); \draw[ba3] (1) to (5);
      \draw (0,0) node{$\omega$};
    \end{scope}
    \begin{scope}[shift={(5,0)}]
      \node (1) at (90:1) {$1$}; 
      \node (2) at (18:1) {$2$}; 
      \node (3) at (-54:1) {$3$};
      \node (4) at (-126:1) {$4$}; 
      \node (5) at (162:1) {$5$};
      \draw[ba3] (1) to (2); \draw[ba3] (2) to (3); 
      \draw[ba3] (4) to (3); \draw[ba3] (5) to (4); \draw[ba3] (1) to (5);
      \draw (0,0) node{$\omega'$};
    \end{scope}
  \end{tikzpicture}
  \]
  It is elementary to see that source-to-sink conversions preserve the difference between the number of clockwise edges and counterclockwise edges. Since this quantity is $3-2=1$ in $\omega$ and $2-3=-1$ in $\omega'$, then $\omega\not\equiv\omega'$, and hence $T$ and $T'$ are distinct toric posets. However, both of them have the same set of nonempty toric chains: five of size $1$ (the vertices), and five of size $2$ (the edges). Moreover, each size-$2$ toric chain trivially has the same cyclic order. Thus, to define a toric poset over a graph, it is \emph{not} enough to just specify the toric chains and the cyclic order within each one. 

The easiest way to resolve this is to simply define the toric heap poset directly from an acyclic orientation. Recall how the poset $P_\w=P(G_\w,\omega_\w)$ naturally arises from a word $\w\in S^*$ in Definition~\ref{defn:w-graph}. This also defines a natural toric poset, denoted $T_\w=T(G_\w,[\omega_\w])$. Compare the following definition to that of the heap $\HHH(\w)$ from Definition~\ref{defn:heap-coxeter}.

\begin{defn}\label{defn:toric-heap-coxeter}
  Fix a Coxeter system $(W,S)$, and let $\w=s_{x_1}\cdots s_{x_m}$ be a word in $S^*$. Define the labeling map 
  \[
  \tau_\w\colon T_\w\longto\Gamma\,,\qquad \tau_\w(i)=s_{x_i}.
  \]
  The triple $\TTT(\w):=(T_\w,\Gamma,\tau_\w)$ is called the \emph{toric heap of $\w$}.
\end{defn}

Given a word $\w\in S^*$, the labeling maps of the heap poset $P_\w$
and toric heap poset $T_\w$ are described by the following commutative diagram, 
where $\iota \colon P_\w \to T_\w$ is the identity map on the underlying set $P_\w=T_\w=[m]$.
\begin{equation}\label{eq:heap-diagram}
  \begin{gathered}
  \xymatrix{P_\w\ar[rr]^{\phi_\w}\ar[dr]_\iota && \Gamma \\ &
    T_\w\ar@{-->}[ur]_{\tau_\w} & }
  \end{gathered}
\end{equation}
Since toric directed paths are directed paths, $\iota$ pulls back toric chains in $T_\w$ to chains in $P_\w$. However, the converse need not hold. For example, consider the word $\w=s_1s_2s_3\in S^*$ in $W(A_3)$. The heap poset $P_\w$ is totally ordered. However, there is no size-$3$ toric chain in $T_\w$. 

Consider two torically reduced words in the same commutativity class, $\w\sim\w'$. By Proposition~\ref{prop:isomorphic-heaps}, the heaps $\HHH(\w)$ and $\HHH(\w')$ are isomorphic. Let $f\colon P_\w\to P_{\w'}$ be the poset isomorphism. The toric heaps $\TTT(\w)=(T_\w,\Gamma,\tau_\w)$ and $\TTT(\w')=(T_{\w'},\Gamma,\tau_{\w'})$ are related by the following commutative diagram
  \begin{equation}\label{eqn:commutative-diagram}
    \begin{gathered}
  \xymatrix{ P_\w \ar[rr]^\iota \ar[d]_{f} && T_\w \ar[rr]^{\tau_\w} \ar[d]^g && \Gamma \ar[d]^{id} \\
  				P_{\w'} \ar[rr]^{\iota'} && T_{\w'} \ar[rr]^{\tau_{\w'}} && \Gamma}
  \end{gathered}
  \end{equation}
where $g:=\iota'\circ f\circ\iota^{-1}$ is a bijection, and so $(g,id)$ is a toric poset isomorphism. A similar argument shows that if $\w$ and $\u$ differ by a single cyclic shift, then $\TTT(\w)$ and $\TTT(\u)$ are isomorphic. Specifically, define $g:T_\w\to T_{\u}$ by $g(i)=i+1\pmod{m}$, and consider only the right-hand side of the commutative diagram in Eq.~\eqref{eqn:commutative-diagram}. Since toric heaps are preserved by cyclic shifts and short braid relations, then torically equivalent words $\w\sim\u$ will yield isomorphic toric heaps. Thus, as with heaps, we will henceforth only consider toric heaps up to isomorphism.

Recall that there is a bijection between the linear extensions of a heap $\HHH(\w)$ and words in the commutativity class $\CCC(\w)$. The cyclic analogue of this holds as well. Since a total toric extension is a toric poset that is a total toric order, we usually denote it with a cyclic word, e.g., $[\u]$, instead of as a toric heap $([\u],K_n,\tau_\u)$ over the complete graph. Clearly, $[\w]$ is one total toric extension of $\TTT(\w)$. Given $\w\in S^*$, define
\begin{equation}\label{eqn:L_tor}
\LLL_{\tor}(\TTT(\w))=\{[\u]:[\u] \text{ is a total toric extension of }
\TTT(\w)\}.
\end{equation}
\begin{runningcont}
The toric heap $\TTT(\w)$ of the word $\w=s_3s_1s_2s_1s_2$ in $W(B_2)$, shown in Figures~\ref{fig:b2-toric-heap} and \ref{fig:faux-cfc}, has two total toric extensions which are both in the same cyclic commutativity class:
\[
\LLL_{\tor}(\TTT(\w)) = \{[s_3s_1s_2s_1s_2], [s_1s_3s_2s_1s_2]\}=\CCC_{\tor}([\w]).
\]
\end{runningcont}

As with the case of ordinary reducibility, there is a bijection between total toric extensions and cyclic commutativity classes. 

\begin{thm} \label{thm:cyclic-comm-class}
Let $\w$ be a torically reduced word in a Coxeter system $(W,S)$. Then
\[
\LLL_{\tor}(\TTT(\w)) = \CCC_{\tor}([\w]).
\]
\end{thm}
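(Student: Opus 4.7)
The plan is to establish both containments separately. The forward direction $\CCC_{\tor}([\w]) \subseteq \LLL_{\tor}(\TTT(\w))$ follows quickly from the invariance of toric heaps under cyclic shifts and short braid relations, while the reverse containment is the substantive part and requires establishing a connectedness property of total toric extensions.

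For the forward inclusion, suppose $[\u] \in \CCC_{\tor}([\w])$, so that $[\u]$ is obtained from $[\w]$ by a finite sequence of short braid relations on cyclic words in the sense of Definition~\ref{defn:cyclic-braids}. After choosing appropriate cyclic representatives, each such step amounts to an adjacent transposition of two commuting generators in a linear word. The discussion surrounding diagram \eqref{eqn:commutative-diagram} shows that both cyclic shifts (i.e., changes of linear representative) and short braid relations induce isomorphisms of toric heaps, so $\TTT(\u) \cong \TTT(\w)$. Moreover, the cyclic word $[\u]$ is tautologically a total toric extension of its own toric heap $\TTT(\u)$: its cyclic order on positions completes the edge set of $G_\u$ to the complete graph $K_m$ while preserving the labeling $\tau_\u$. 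Transporting this extension along the isomorphism yields $[\u] \in \LLL_{\tor}(\TTT(\w))$.

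For the reverse inclusion $\LLL_{\tor}(\TTT(\w)) \subseteq \CCC_{\tor}([\w])$, let $[\u]$ be a total toric extension of $\TTT(\w)$. Geometrically, the chamber $c(T_\u)$ of $\AAA_{\tor}(K_{[m]})$ lies inside the chamber $c(T_\w)$ of the coarser arrangement $\AAA_{\tor}(G_\w)$; equivalently, the underlying cyclic order on positions refines the toric poset $T_\w$. The $(m-1)!$ total toric extensions of $T_\w$ correspond precisely to the sub-chambers of $\AAA_{\tor}(K_{[m]})$ sitting inside $c(T_\w)$. Two such sub-chambers are adjacent across a toric hyperplane $H_{ij}^{\tor}$ exactly when their cyclic orders differ by swapping two cyclically adjacent positions $i, j$, and for the shared wall to lie strictly inside $c(T_\w)$ one must have $\{i,j\} \notin E(G_\w)$, i.e., $\tau_\w(i)$ and $\tau_\w(j)$ must commute as generators. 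Crossing such a wall is then precisely a short braid relation on the cyclic word. Hence, once we know that the family of sub-chambers filling $c(T_\w)$ is connected under this wall-crossing adjacency, we conclude $[\u] \sim [\w]$, i.e., $[\u] \in \CCC_{\tor}([\w])$.

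The main obstacle is precisely this connectedness claim: any two total toric extensions of a fixed toric poset are linked by transpositions of cyclically adjacent incomparable elements. This is the toric analogue of the standard fact that the linear extensions of a finite poset form a connected graph under adjacent transpositions of incomparable pairs. I would establish it either geometrically, by observing that $c(T_\w)$ is a connected open region in the torus $\R^{[m]}/\Z^{[m]}$ whose decomposition into open convex sub-chambers necessarily has connected dual adjacency graph, or combinatorially by induction: choose a source $i$ of some acyclic representative of $T_\w$, rotate $[\u]$ so that position $i$ appears first (a free choice of cyclic representative), perform short braid relations to bring $i$ into the initial slot matching $\w$, and then peel off this common prefix to reduce to a strictly smaller toric heap where the classical non-toric linear-extension connectedness result applies directly.
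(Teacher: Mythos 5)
Your proof is correct and follows essentially the same route as the paper: both containments are established by viewing total toric extensions as the chambers of $\AAA_{\tor}(K_{[m]})$ sitting inside the toric chamber $c(T_\w)$, with the easy containment $\CCC_{\tor}([\w])\subseteq\LLL_{\tor}(\TTT(\w))$ coming from the invariance of the toric heap under cyclic shifts and short braid relations. The only real difference is one of emphasis: the paper dispatches the substantive containment $\LLL_{\tor}(\TTT(\w))\subseteq\CCC_{\tor}([\w])$ in a single sentence, leaning implicitly on the chamber-decomposition results of Develin--Macauley--Reiner, whereas you correctly isolate the underlying wall-crossing connectedness claim as the crux and supply a sound geometric argument for it (the combinatorial induction you sketch needs a little more care about which acyclic representative $\omega'\equiv\omega_\w$ the extension $[\u]$ linearizes, but the cyclic shifts realizing source-to-sink conversions handle exactly that).
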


\begin{proof}
  Suppose $|S|=n$, and pick $[\u]$ from $\LLL_{\tor}(\TTT(\w))$, which defines a toric heap $([\u],K_n,\tau_\u)$ over the complete graph. Let $(\sigma,\gamma)$ be the extension, where $\gamma\colon\Gamma\to K_n$ is the edge-inclusion map.
  \[
  \xymatrix{ T_\w\ar[rr]^{\tau_\w} \ar[d]_\sigma && \Gamma\ar[d]^\gamma \\ [\u]\ar[rr]_{\tau_\u} && K_n}
  \]
  Since $[\w]$ and $[\u]$ are total toric extensions of $T_{\w}$, then $\w$ and $\u$ differ by a sequence of cyclic shifts and short braid relations. Therefore, $[\u]\in\CCC_{\tor}([\w])$. 

Conversely, let $[\u]\in\CCC_{\tor}([\w])$, which means there is a sequence of cyclic shifts and short braid relations that carries $\w$ to $\u$. By Corollary~5.2 in \cite{develin2016toric}, every (closed) toric chamber $\overline{{c(T_\w)}}$ is the union of closed chambers $\bar{c}_{[\w']}$ corresponding to the total toric extensions of $T_\w$. This means that $[\w]$ and $[\u]$ are both total toric extensions of $T_\w$. 
\end{proof}

\begin{ex}
  Let us revisit the Coxeter elements in the affine Coxeter group $W=W(\widetilde{A}_3)$ from Example~\ref{ex:C_4}. As before, let $\c_1=s_1s_2s_3s_4$, $\c_2=s_1s_3s_2s_4$, and $\c_3=s_1s_4s_3s_2$; see Eq.~\eqref{eq:3coxeter-elts}. Both $[\c_1]$ and $[\c_3]$ are the only cyclic words in their cyclic commutativity class, and so
\[
\RRR_{\tor}([\c_i])=\CCC_{\tor}([\c_i])=\LLL_{\tor}(\TTT(\c_i))=\{[\c_i]\}\qquad\text{for}\;\; i=1,3. 
\]
Additionally, for both $i=1,3$, the set $\RRR_{\tor}(\c_i)$ contains four reduced words -- the cyclic shifts of $\c_i$. 

In contrast, $\RRR_{\tor}([\c_2])$ contains four cyclic words that comprise a single cyclic commutativity class:
\[
\RRR_{\tor}([\c_2])=\{[s_1s_3s_2s_4],[s_1s_3s_4s_2],[s_3s_1s_2s_4],[s_3s_1s_4s_2]\}=\CCC_{\tor}([\c_2])=\LLL_{\tor}(\TTT(\c_2)).
\]
These were shown in Eq.~\eqref{eq:4words}. Finally, $\RRR_{\tor}(\c_2)$ contains 16 reduced words, four from each cyclic word above. These fall into six distinct group elements, which appeared in Eq.~\eqref{eq:16elements}. 
\end{ex}

\section{Torically fully commutative (TFC) elements}\label{sec:faux-cfc}

We begin this section with a reminder of a notational convention that we will use frequently: if we have a group element $w\in W$ and write $\w$, we are referring to an arbitrary fixed reduced word for $w$.

  The fully commutative (FC) elements have been well-studied; see \cite{stembridge1996fully, stembridge1998enumeration, biagioli2015fully}. It is clear that we have inclusions
\[
\begin{tikzpicture}
  \node at (0,0) {$\C(W,S)\subseteq\CFC(W,S)$};
  \node at (2,.35) {\rotatebox{30}{$\begin{array}{c}\subseteq\end{array}$}};
  \node at (2,-.35) {\rotatebox{-30}{$\begin{array}{c}\subseteq\end{array}$}};
  \node[anchor=west] at (2.2,.65) {$\FC(W,S)$}; 
  \node[anchor=west] at (2.2,-.65) {$\TFC(W,S)$};
  \end{tikzpicture}
\]
The FC elements are those that have a unique heap, i.e., for any reduced word $\u\in\RRR(w)$, we have $\HHH(w):=\HHH(\w)=\HHH(\u)$. In this section, we will establish that the torically fully commutative (TFC) elements are those that have a unique toric heap: if $\u\in\RRR_{\tor}(w)$, then $\TTT(w):=\TTT(\w)=\TTT(\u)$. After that, we will study basic properties of TFC and the faux CFC elements -- those that are TFC but not CFC. 

Say that a subset of $\Cyc(W,S^*)$ is \emph{torically order-theoretic} if it is the set of total toric extensions of a toric heap, i.e., if it can be expressed as $\LLL_{\tor}(\TTT(\u))$ for some torically reduced word $\u$. We will begin with a simple lemma; note that the converse of it is trivial. 

\begin{lem}\label{lem:torically-order-theoretic}
  If $\RRR_{\tor}([w])$ is torically order-theoretic, then $\RRR_{\tor}([w])=\LLL_{\tor}(\TTT(\w))$. 
\end{lem}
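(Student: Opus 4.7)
The plan is to unpack the hypothesis, locate $[\w]$ inside the set on each side, and then transport everything back to a toric heap built directly from $\w$ via an isomorphism of toric heaps.

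First I would use the definition of torically order-theoretic: the hypothesis gives some torically reduced word $\u$ with $\RRR_{\tor}([w])=\LLL_{\tor}(\TTT(\u))$. Since $w$ is torically reduced (as is implicit in the notation $\RRR_{\tor}([w])$), the cyclic word $[\w]$ belongs to $\RRR_{\tor}([w])$ trivially, hence $[\w]\in\LLL_{\tor}(\TTT(\u))$. Now invoke Theorem~\ref{thm:cyclic-comm-class}, which identifies $\LLL_{\tor}(\TTT(\u))$ with the cyclic commutativity class $\CCC_{\tor}([\u])$. So $[\w]\sim[\u]$, meaning $\w$ and $\u$ differ by a sequence of cyclic shifts and short braid relations.

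Next I would use the observation made in the paragraph containing the commutative diagram in Eq.~\eqref{eqn:commutative-diagram}: toric heaps are preserved by cyclic shifts and short braid relations. Therefore $\TTT(\w)\cong\TTT(\u)$ as toric heaps, and consequently they have the same set of total toric extensions, i.e., $\LLL_{\tor}(\TTT(\w))=\LLL_{\tor}(\TTT(\u))$. Combining with the hypothesis yields
\[
\RRR_{\tor}([w]) \;=\; \LLL_{\tor}(\TTT(\u)) \;=\; \LLL_{\tor}(\TTT(\w)),
\]
which is exactly the conclusion.

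The only mildly subtle point, and the one I would be most careful about, is the step asserting that isomorphic toric heaps have literally the same set of total toric extensions when the extensions are encoded as cyclic words in $\Cyc(W,S^*)$ (rather than merely having equinumerous or isomorphic extension sets). This follows because the isomorphism $\TTT(\w)\cong\TTT(\u)$ coming from cyclic shifts and short braid relations acts as the identity on the labeling graph $\Gamma$ (only the underlying toric poset is relabeled by a permutation of positions), so the associated cyclic words read off from total toric extensions on either side are the same. Everything else in the argument is bookkeeping with the definitions of $\RRR_{\tor}$, $\CCC_{\tor}$, and $\LLL_{\tor}$, plus one application of Theorem~\ref{thm:cyclic-comm-class}.
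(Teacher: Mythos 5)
Your proof is correct and follows essentially the same route as the paper's: both use the hypothesis together with Theorem~\ref{thm:cyclic-comm-class} to conclude that $[\w]$ and $[\u]$ lie in the same cyclic commutativity class, and then identify $\LLL_{\tor}(\TTT(\w))$ with $\LLL_{\tor}(\TTT(\u))=\RRR_{\tor}([w])$. The only cosmetic difference is that the paper closes by applying Theorem~\ref{thm:cyclic-comm-class} a second time to $\w$ (using that non-disjoint equivalence classes coincide), whereas you instead invoke the invariance of $\TTT(\cdot)$ under cyclic shifts and short braid relations -- a fact the paper establishes right after Eq.~\eqref{eqn:commutative-diagram}, so this substitution is harmless.
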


\begin{proof}
  Write $\RRR_{\tor}([w])=\CCC_{\tor}([\w_1])\cup\cdots\cup\CCC_{\tor}([\w_N])$, its unique decomposition as a disjoint union of cyclic commutativity classes, as in Eq.~\eqref{eqn:cyclic-decomposition}. Since it is torically order-theoretic, $\RRR_{\tor}([w])=\LLL_{\tor}(\TTT(\u))$ for some $\u\in\RRR_{\tor}(W,S^*)$. The proof will follow once we establish the following:
\begin{equation}\label{eqn:torically-order-theoretic}
  \CCC_{\tor}([\w])\subseteq\RRR_{\tor}([w])=\LLL_{\tor}(\TTT(\u))=\CCC_{\tor}([\u])=\CCC_{\tor}([\w])=\LLL_{\tor}(\TTT(\w)).
\end{equation}
The subset containment is immediate, and we have already seen the first equality. The second and fourth equalities are due to Theorem~\ref{thm:cyclic-comm-class}, and so now we can deduce that $\CCC_{\tor}([\w])\subseteq\CCC_{\tor}([\u])$. However, since these are non-disjoint equivalence classes, they must be the same, which confirms that the containment in Eq.~\eqref{eqn:torically-order-theoretic} is actually an equality and completes the proof of the lemma. 
\end{proof}

Though the CFC elements are a natural cyclic analogue of the FC elements, one must go up to the TFC elements to get the cyclic version of Proposition~\ref{prop:order-theoretic}.

\begin{thm}\label{thm:torically-order-theoretic}
  For a torically reduced element $w\in W$, the following are equivalent:
\begin{enumerate}[(i)]
\item $w$ is TFC.
\item $\RRR_{\tor}([w])$ is torically order-theoretic.
\item $\RRR_{\tor}([w])=\LLL_{\tor}(\TTT(\u))$ for some (equivalently, every) $\u \in \RRR_{\tor}(w)$.
\end{enumerate}
\end{thm}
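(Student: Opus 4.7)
The plan is to establish the cycle (i) $\Rightarrow$ (iii) $\Rightarrow$ (ii) $\Rightarrow$ (i), which simultaneously confirms that the ``some $\u$'' and ``every $\u$'' versions of (iii) are equivalent. The two pieces of machinery I will lean on are Theorem~\ref{thm:cyclic-comm-class}, which identifies $\LLL_{\tor}(\TTT(\u))$ with $\CCC_{\tor}([\u])$ for any torically reduced $\u$, and Lemma~\ref{lem:torically-order-theoretic}, which does the bulk of the work for the hardest implication.

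For (i) $\Rightarrow$ (iii), I will unpack the TFC hypothesis using the decomposition in Eq.~\eqref{eqn:cyclic-decomposition}: saying $w$ is TFC means $N=1$, so $\RRR_{\tor}([w]) = \CCC_{\tor}([\w])$. Any $\u\in\RRR_{\tor}(w)$ satisfies $[\u]\in\RRR_{\tor}([w])$, and since cyclic commutativity classes partition this set, $\CCC_{\tor}([\u]) = \CCC_{\tor}([\w])$ for every such $\u$. Applying Theorem~\ref{thm:cyclic-comm-class} then yields $\LLL_{\tor}(\TTT(\u)) = \CCC_{\tor}([\u]) = \RRR_{\tor}([w])$, which is (iii) for every $\u\in\RRR_{\tor}(w)$.

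The implication (iii) $\Rightarrow$ (ii) is immediate from the definitions, since (iii) explicitly exhibits $\RRR_{\tor}([w])$ in the form $\LLL_{\tor}(\TTT(\u))$ required by torical order-theoreticity.

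For (ii) $\Rightarrow$ (i), I will invoke Lemma~\ref{lem:torically-order-theoretic} directly: if $\RRR_{\tor}([w])$ is torically order-theoretic, the lemma gives $\RRR_{\tor}([w]) = \LLL_{\tor}(\TTT(\w))$, which equals $\CCC_{\tor}([\w])$ by Theorem~\ref{thm:cyclic-comm-class}. Since the cyclic commutativity class decomposition of $\RRR_{\tor}([w])$ in Eq.~\eqref{eqn:cyclic-decomposition} is unique, this forces $N=1$, and hence $w$ is TFC by Definition~\ref{defn:faux-cfc}. The main obstacle here -- namely, ruling out the possibility that torical order-theoreticity is witnessed by some word $\u$ lying outside the cyclic commutativity class of $\w$ -- has already been addressed by the string of equalities in the proof of Lemma~\ref{lem:torically-order-theoretic}, so once the cycle is closed the ``some/every'' parenthetical in (iii) follows automatically: if (iii) holds for a single $\u\in\RRR_{\tor}(w)$ then (ii) holds, hence (i) holds, hence by the first implication (iii) holds for every $\u\in\RRR_{\tor}(w)$.
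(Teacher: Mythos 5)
Your proposal is correct and follows essentially the same route as the paper: the cycle (i)$\Rightarrow$(iii)$\Rightarrow$(ii)$\Rightarrow$(i), with Lemma~\ref{lem:torically-order-theoretic} carrying the (ii)$\Rightarrow$(i) step and Theorem~\ref{thm:cyclic-comm-class} converting between $\LLL_{\tor}(\TTT(\u))$ and $\CCC_{\tor}([\u])$. The only cosmetic difference is in (i)$\Rightarrow$(iii): you get the ``for every $\u$'' version directly from the fact that $N=1$ forces $\CCC_{\tor}([\u])=\CCC_{\tor}([\w])$, whereas the paper first proves the ``for some'' case and then reapplies it to $u$, which is TFC along with $w$; both arguments are sound.
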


\begin{proof}
The implication (ii)$\Rightarrow$(i) was established in the proof of Lemma~\ref{lem:torically-order-theoretic}, where it was shown that $\RRR_{\tor}([w])=\CCC_{\tor}([\w])$. Also simple is (iii)$\Rightarrow$(ii), which is immediate from the definition.

For (i)$\Rightarrow$(iii): If $w$ is TFC, then it has only one cyclic commutativity class, $\RRR_{\tor}([w])=\CCC_{\tor}([\w])=\LLL_{\tor}(\TTT(\w))$. The first equality is from Eq.~\eqref{eqn:cyclic-decomposition} and the second is by Theorem~\ref{thm:cyclic-comm-class}. This establishes the ``for some'' part of (iii); it suffices to prove the stronger ``for every'' part. Let $\u\in\RRR_{\tor}(w)$, which means that $[\u]\approx[\w]$, and $u$ is TFC because $w$ is. The following chain of equalities will complete the theorem:
\[
\RRR_{\tor}([w])=\RRR_{\tor}([\w])=\RRR_{\tor}([\u])=\RRR_{\tor}([u])=\LLL_{\tor}(\TTT(\u)).
\]
Specifically, the first and third are by definition, and the second is because $[\u]\approx[\w]$. The fourth equality follows from the proof of the ``for some'' implication above, applied to the TFC element $u$.
\end{proof}

Since every toric poset is completely determined by its set of total toric extensions \cite[Corollary 5.2]{develin2016toric}, Theorem~\ref{thm:torically-order-theoretic} implies that the TFC elements are those that have a unique toric heap. An open question is to classify all TFC and faux CFC elements in an arbitrary Coxeter group. In this section, we will begin with some examples, and then give some necessary and sufficient conditions for an element to be TFC or faux CFC. Throughout, $\w$ will be a torically reduced word. 

\begin{ex} \label{ex:faux-cfc}
The following elements are all faux CFC:
\begin{enumerate}[(i)]
\item The element $w=s_3s_1s_2s_1s_2$ in $W(B_2)$ from Running Example~\ref{run:b2}.
\item The element $w=s_0s_1s_0s_1s_2s_3s_2s_3$ in the affine group $W(\widetilde{C}_3)$, which has Coxeter graph 
  \[
  \begin{tikzpicture}[scale=.75]
    \begin{scope}[shift={(-7,0)},>=stealth']
      \draw[fill=black] {(0,0) circle (2pt) node[label=below:$s_0$]{}};
      \draw[fill=black] {(1.5,0) circle (2pt) node[label=below:$s_1$]{}};
      \draw[fill=black] {(3,0) circle (2pt) node[label=below:$s_2$]{}};
      \draw[fill=black] {(4.5,0) circle (2pt) node[label=below:$s_3$]{}};
      \draw {(0.75,0) node[label=above:{\small $4$}]{}};
      \draw {(3.75,0) node[label=above:{\small $4$}]{}};
      \draw {[-] (0,0)--(4.5,0) [-] };
    \end{scope}
  \end{tikzpicture}
    \]
\item The element $w=ststaba$ in the Coxeter group whose Coxeter graph is shown below.
\begin{center}
  \begin{tikzpicture}[scale=.8]
    \begin{scope}[shift={(-8,0)},>=stealth',scale=.85]
      \draw[fill=black] {(1.5,0) circle (2pt) node[label=below:$s$]{}};
      \draw[fill=black] {(3,0) circle (2pt) node[label=below:$t$]{}};
      \draw[fill=black] {(4,1) circle (2pt) node[label=above:$a$]{}};
      \draw[fill=black] {(4,-1) circle (2pt) node[label=below:$b$]{}};
      \draw {(2.25,0) node[label=above:{\small $4$}]{}};
      \draw {(4,0) node[label=right:{\small $4$}]{}};
      \draw {[-] (1.5,0)--(3,0) [-] };
      \draw {[-] (3,0)--(4,1) [-] };
      \draw {[-] (3,0)--(4,-1) [-] };
      \draw {[-] (4,1)--(4,-1) [-] };
    \end{scope}
    \begin{scope}[scale=.8,shift={(0,0)},>=stealth']
      \draw[decoration={markings, mark=at position 0.125 with
          {\arrow{<}}}, postaction={decorate}] (0,0) circle(1.4cm);
      \draw[decoration={markings, mark=at position 0.125 with
          {\arrow{<}}}, postaction={decorate}] 
      (0cm,0cm) circle(0.65cm); 
      \draw (90:1) node{$s$}; 
      \draw (90-51.4:1) node{$t$}; 
      \draw (90-102.8:1) node{$s$};
      \draw (90-154.2:1) node{$t$}; 
      \draw (90+154.2:1) node{$a$};       
      \draw (90+102.8:1) node{$b$}; 
      \draw (90+51.4:1) node{$a$}; 
      \node at (2,0) {$\approx$};
      \node at (0,0) {$[\w]$};
    \end{scope}
    \begin{scope}[scale=.8,shift={(4,0)},>=stealth']
      \draw[decoration={markings, mark=at position 0.125 with
          {\arrow{<}}}, postaction={decorate}] (0,0) circle(1.4cm);
      \draw[decoration={markings, mark=at position 0.125 with
          {\arrow{<}}}, postaction={decorate}] 
      (0cm,0cm) circle(0.65cm); 
      \draw (90:1) node{$t$}; 
      \draw (90-51.4:1) node{$s$}; 
      \draw (90-102.8:1) node{$t$};
      \draw (90-154.2:1) node{$s$}; 
      \draw (90+154.2:1) node{$a$};       
      \draw (90+102.8:1) node{$b$}; 
      \draw (90+51.4:1) node{$a$}; 
      \node at (2,0) {$\sim$};
    \end{scope}
    \begin{scope}[scale=.8,shift={(8,0)},>=stealth']
      \draw[decoration={markings, mark=at position 0.125 with
          {\arrow{<}}}, postaction={decorate}] (0,0) circle(1.4cm);
      \draw[decoration={markings, mark=at position 0.125 with
          {\arrow{<}}}, postaction={decorate}] 
      (0cm,0cm) circle(0.65cm); 
      \draw (90:1) node{$t$}; 
      \draw (90-51.4:1) node{$s$}; 
      \draw (90-102.8:1) node{$t$};
      \draw (90-154.2:1) node{$a$}; 
      \draw (90+154.2:1) node{$b$};       
      \draw (90+102.8:1) node{$a$}; 
      \draw (90+51.4:1) node{$s$}; 
      \node at (0,0) {$[\w]$};
    \end{scope}
  \end{tikzpicture}
\end{center}
\end{enumerate}
\end{ex}

It is obvious that a faux CFC element cannot contain a long braid relation of odd length, because applying a single such relation changes the number of individual generators in the word. 

\begin{prop}\label{prop:faux-cfc}
  If $w$ is faux CFC and $m(s,t)\geq 3$ is odd, then none of the torically reduced words $\u\in\RRR_{\tor}(w)$ contain $\<s,t\>_{m(s,t)}$ as a subword. $\hfill\Box$
\end{prop}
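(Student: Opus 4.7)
The plan is a contradiction argument based on a simple multiset-counting invariant. Suppose $w$ is faux CFC (so in particular TFC, by Definition~\ref{defn:faux-cfc}) and that some $\u\in\RRR_{\tor}(w)$ contains $\<s,t\>_m$ as a subword, where $m:=m(s,t)\geq 3$ is odd. By Definition~\ref{defn:cyclic-braids}, "subword" means a consecutive subword of some cyclic shift of $\u$; since every cyclic shift of $\u$ is a representative of the same cyclic word $[\u]\in\RRR_{\tor}([w])$ and therefore again lies in $\RRR_{\tor}(w)$, I may assume without loss of generality that $\u$ itself contains $\<s,t\>_m$ as a consecutive subword.

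Next, I would produce a second torically reduced word $\u'$ by replacing that occurrence of $\<s,t\>_m$ inside $\u$ with $\<t,s\>_m$. Then $[\u]$ and $[\u']$ differ by a single braid in the sense of Definition~\ref{defn:cyclic-braids}, so $[\u']\approx[\u]\approx[\w]$, giving $\u'\in\RRR_{\tor}(w)$. (The fact that $\u'$ is still torically reduced follows from Definition~\ref{defn:cyclically-reduced}, since $\u$ is and a braid is precisely one of the allowed operations.)

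The crux is the following invariant: the multiset of generators appearing in a word is preserved both by cyclic shifts and by \emph{short} braid relations $ab\leftrightarrow ba$, so it descends to a well-defined invariant of each cyclic commutativity class $\CCC_{\tor}([\,\cdot\,])$. Since $m$ is odd, $\<s,t\>_m$ contains $(m{+}1)/2$ copies of $s$ and $(m{-}1)/2$ copies of $t$, while $\<t,s\>_m$ has these counts swapped; hence the multisets of $\u$ and $\u'$ differ and $\CCC_{\tor}([\u])\neq\CCC_{\tor}([\u'])$. The decomposition in Eq.~\eqref{eqn:cyclic-decomposition} then exhibits at least two distinct cyclic commutativity classes inside $\RRR_{\tor}([w])$, contradicting the assumption that $w$ is TFC. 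The only real subtlety is bookkeeping in the first step -- making sure one may legitimately move from "subword of a cyclic shift" to "subword of $\u$ itself" without leaving $\RRR_{\tor}(w)$; after that, the argument is a one-line multiset count exploiting the parity of $m$.
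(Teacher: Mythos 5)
Your argument is correct and is essentially identical to the paper's justification, which dismisses the proposition as obvious ``because applying a single such relation changes the number of individual generators in the word'' --- i.e.\ the same multiset-of-generators invariant of a cyclic commutativity class, combined with the parity of $m(s,t)$. Your additional bookkeeping (passing from a subword of a cyclic shift to a subword of $\u$ itself, and checking $\u'$ stays torically reduced) is sound and fills in exactly what the paper leaves implicit.
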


The previous result is a necessary condition for $w$ to be faux CFC. Next, we will provide a sufficient condition. Say that $s\in S$ is an \emph{endpoint} of the Coxeter graph $\Gamma$ if it has degree $1$, and is even (respectively, odd) if $m(s,t)$ is even (respectively, odd) for the unique $t$ for which $m(s,t)>2$. In this case, we call $\{s,t\}$ a \emph{spoke} of $\Gamma$, and can speak of spokes being even or odd. 

\begin{prop} \label{prop:build-faux-CFC}
Let $\Gamma$ be a Coxeter graph with an even endpoint $s$ with adjacent vertex $t$.
Suppose $\w=\<s,t\>_{m(s,t)}\u$ is a reduced word for some element $w \in W$
where the subword $\u$ satisfies the following two properties:
\begin{itemize}
	\item $\u$ contains no generators $s$ and $t$, and 
	\item $\u$ is a reduced word for some CFC element in $W$.
\end{itemize}
Then $w$ is TFC. 
\end{prop}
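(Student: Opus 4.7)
The plan is to verify the two defining conditions of TFC for $w$: that $w$ is torically reduced, and that it has a unique cyclic commutativity class. By Theorem~\ref{thm:torically-order-theoretic}, the latter is equivalent to showing $\RRR_{\tor}([w]) = \CCC_{\tor}([\w])$. Throughout, two structural observations will do the work: since $s$ is an endpoint of $\Gamma$ adjacent only to $t$ and $\u$ contains no $s$ or $t$, the generator $s$ commutes with every letter of $\u$; and since $m := m(s,t)$ is even, $w_0 := (st)^{m/2} = (ts)^{m/2}$ is the longest element of the dihedral subgroup $\langle s,t\rangle$, which is central there, so $w_0 s = s w_0$ and $w_0 t = t w_0$ as group elements.

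The crux is to show that no long braid relation applied to any word in $\CCC_{\tor}([\w])$ can escape this class. Since $s$ and $t$ do not commute, their relative cyclic order is preserved under short braids and cyclic shifts, so throughout $\CCC_{\tor}([\w])$ the $s$'s and $t$'s form the cyclic pattern $s, t, s, t, \dots, s, t$ with $\u$-letters interspersed. I will run a case analysis on a candidate long braid $\langle a, b\rangle_k$ with $k \geq 3$. If $\{a,b\}=\{s,t\}$, then $k=m$; a cyclic shift of the leading $s$ of $(st)^{m/2}$ past any $\u$-letters (using that $s$ commutes with every letter of $\u$) shows $[(st)^{m/2}\u'] = [(ts)^{m/2}\u']$ as cyclic words for any arrangement $\u'$, so such a braid preserves $\CCC_{\tor}([\w])$. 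If $\{a,b\}\subseteq \supp(u)$, then the braid subword lies entirely among $\u$-letters; the sequence of $\u$-letters read cyclically around $[\w']$ is a cyclic rearrangement of $\u$ via short braids among $\u$-letters, and cutting at the appropriate point produces a reduced word for a cyclic shift of $u$ that contains a long braid subword, contradicting the fact that every cyclic shift of $u$ is FC. If $a=s$ and $b\notin\{s,t\}$, then $m(s,b)=2$, so no long braid exists.

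The remaining case, $a=t$ and $b\in\supp(u)$ with $m(t,b) \geq 3$, is the most delicate. A contiguous subword of the form $\langle t, b\rangle_k$ with $k \geq 4$ would place two consecutive $t$'s in a block of length $\geq 4$ with only $b$'s between them, but cyclically every pair of adjacent $t$'s is separated by exactly one $s$, a contradiction. For $k=3$, the subword $tbt$ is ruled out for the same reason, leaving only $btb$. For $btb$ to appear contiguously, $b$ must be adjacent to a single $t$ on both sides. However, since $b$ does not commute with $t$, $b$ cannot cross any $t$ via short braids, so all occurrences of $b$ remain confined to the single ``$t$-gap'' that originally contained $\u$ in $\w$, namely the cyclic stretch from $t_{m/2}$ to $t_1$ through the $\u$-block. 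Within this $t$-gap, $b$ can be slid past the single $s_1$ (using $s$-commutation), but $b$ can only ever be adjacent to $t_{m/2}$ from one side or to $t_1$ from one side, never flanking any single $t$ on both sides, so no $btb$ subword can form.

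Combining the cases, the only long braid ever applicable inside $\CCC_{\tor}([\w])$ is $\langle s,t\rangle_m$ on the $(st)$-block, which stays inside the class, so $\RRR_{\tor}([w]) = \CCC_{\tor}([\w])$. Finally, $w$ is torically reduced: the normal form $(st)^{m/2}\v$ (with $\v \sim \u$) is a reduced word of length $m+\ell(u)$ because $\supp(v) \subseteq S \setminus \{s,t\}$ is disjoint from the right descent set $\{s,t\}$ of $w_0$, so no cancellation occurs at $w_0 \cdot v$; and every word in $\CCC_{\tor}([\w])$ has this same length and represents a conjugate of $w$ which, by structural computations using $w_0 s = s w_0$ and $s u = u s$, still has length $m+\ell(u)$. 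The main obstacle is the last subcase of the case analysis: rigorously tracking why a $\u$-letter not commuting with $t$ cannot escape the original $t$-gap, which is the only point at which the combined hypotheses -- that $s$ is an endpoint adjacent only to $t$, that $\u$ avoids both $s$ and $t$, and that $m$ is even -- are all simultaneously essential.
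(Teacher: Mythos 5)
Your proof is correct and follows essentially the same route as the paper: the heart of both arguments is the computation $[\<s,t\>_{m(s,t)}\u]\approx[\<t,s\>_{m(s,t)}\u]\sim[s\<t,s\>_{m(s,t)-2}t\,\u]=[\w]$, using that the endpoint $s$ commutes with every letter of $\u$ and that $m(s,t)$ is even, to show the unique applicable long braid does not leave the cyclic commutativity class. The only difference is that you spell out the case analysis showing no other long braid $\<a,b\>_k$ can occur (which the paper simply asserts), and that added detail is sound.
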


\begin{proof}
Since $\u$ is a reduced word for some CFC element containing no $s$ nor $t$,
then $\w=\<s,t\>_{m(s,t)}\u$ is torically reduced, and the only long braid relation that can be applied to $[\w]$ is $\<s,t\>_{m(s,t)}=\<t,s\>_{m(s,t)}$.
Thus, $w$ is torically reduced but not CFC.
Since $s$ commutes with every generator in $\u$,
\[
  [\w]=[ \underbrace{st \cdots st}_{m(s,t)} \u ]  
  \approx [ \underbrace{ts \cdots ts}_{m(s,t)} \u ]  
  \sim [ \underbrace{ts \cdots ts}_{m(s,t)-2} t \u s ]  
  = [ s\underbrace{ts \cdots ts}_{m(s,t)-2} t \u  ]  = [\w].
\]
That is, $\RRR_{\tor}([\w])$ has only one equivalence (cyclic commutativity) class, and so $w$ is TFC. 
\end{proof}

We conjecture that faux CFC elements (with full support) only occur 
when the Coxeter graph has at least one even endpoint. It would be desirable to understand how faux CFC elements can be ``reduced'' down to CFC elements. In our prior examples of faux CFC elements of the form $\<s,t\>_{m(s,t)}\u$, removing $st$ from the word yields an element that can be:
\begin{enumerate}[(i)]
	\item CFC, as in Example~\ref{ex:faux-cfc}(i), or
	\item faux CFC, as in Example~\ref{ex:faux-cfc}(ii), or 
	\item not TFC, as in Example~\ref{ex:faux-cfc}(iii).
\end{enumerate}
Whether the new word $\<s,t\>_{{m(s,t)}-2}\u$ is CFC, faux CFC, or not TFC, 
depends on $\u$. For the three cases above, $\u$ is (i) CFC, (ii) faux CFC, (iii) not cyclically reduced. We end this section with a conjecture.

\begin{conj}\label{conj:faux-cfc}
If $\w=\<s,t\>_{m(s,t)}\u$ is faux CFC and $\u$ torically reduced, then $\<s,t\>_{{m(s,t)}-2}\u$ is TFC. 
\end{conj}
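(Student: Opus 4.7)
The plan is to reduce the statement to the TFC property of $\w$ via a lifting/projection argument. Set $m := m(s,t)$. First, by Proposition~\ref{prop:faux-cfc}, since $\w$ is faux CFC and contains $\<s,t\>_m$, the bond $m$ must be even; hence $m \geq 4$ and $\w' = \<s,t\>_{m-2}\u$ still begins with a nonempty alternating block of $s$'s and $t$'s.

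The first task is to verify that $\w'$ is torically reduced. Since $\w = st \cdot \w'$ is reduced, $\ell(w') = \ell(w) - 2$. If some cyclic-shift/braid manipulation of $\w'$ produced a word $\v$ with $\ell(v) < \ell(w')$, then mimicking that sequence inside the $\w'$-portion of $\w$ would yield a word torically equivalent to $\w$ with length less than $\ell(w)$, contradicting that every element of $\RRR_{\tor}(\w)$ is reduced of length $\ell(w)$.

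The main step is to show $\RRR_{\tor}([w']) = \CCC_{\tor}([\w'])$. Given any cyclic word $[\v] \approx [\w']$, we wish to deduce $[\v] \sim [\w']$. The idea is a three-step lifting/projection procedure. First, \emph{lift} $[\v]$ to a cyclic word $[\v^+] \approx [\w]$ by mimicking the sequence of cyclic shifts and braid relations from $[\w']$ to $[\v]$ on $[\w] = [st\w']$: braid relations within $\w'$ lift to the same braid relations on $\w$, while each cyclic shift of $\w'$ corresponds to cyclic shifts of $\w$ combined with commutations that carry the prepended $s$ and $t$ past any intervening letters. Second, since $\w$ is TFC, $[\v^+] \sim [\w]$, so there exists a sequence of cyclic shifts and short braids from $[\w]$ to $[\v^+]$. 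Third, \emph{project} this sequence back to one from $[\w']$ to $[\v]$ by stripping the distinguished pair of tracked letters $s$ and $t$ throughout.

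The principal obstacle lies in making the lifting and projection rigorous. Lifting a cyclic shift of $\w'$ requires the prepended $s$ and $t$ to traverse possibly many letters of $\w'$ via short braid relations; this is effortless when $s$ (or $t$) is an endpoint of $\Gamma$ (as in the construction of Proposition~\ref{prop:build-faux-CFC}), but much more delicate otherwise, and likely explains why the hypothesis is conjectural rather than known. The projection step is similarly subtle when $\u$ contains further copies of $s$ or $t$: one must argue that the two tracked letters can be unambiguously identified throughout the short braid moves and then cleanly removed. Establishing this bookkeeping -- and in particular showing that the class $[\v^+]$ is well-defined up to short-braid equivalence independently of the chosen mimicking sequence -- is the main technical hurdle in the proof.
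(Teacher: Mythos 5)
This statement is labeled a \emph{Conjecture} in the paper, and the authors offer no proof of it --- it is explicitly left open (see the discussion preceding it and the mention of it again in Section~\ref{sec:conclusions}). So there is no argument of the paper's to compare yours against; the only question is whether your proposal actually closes the problem, and by your own admission it does not. What you have written is a strategy with the two hardest steps (lifting and projection) flagged as ``the main technical hurdle'' rather than carried out, so it should be presented as a plan of attack, not a proof.

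Beyond that, two of the gaps are more serious than your framing suggests. First, your argument that $\w'=\<s,t\>_{m-2}\u$ is torically reduced already relies on the unjustified mimicking: a cyclic shift of $\w'$ moves its leading letter (an $s$ or $t$ from the alternating block) to the end, and this does \emph{not} correspond to any sequence of cyclic shifts and short braids applied to $\w=st\w'$, because the prepended $s$ and $t$ do not commute with each other nor with the remaining letters of $\<s,t\>_{m-2}$, and the conjecture (unlike Proposition~\ref{prop:build-faux-CFC}) makes no assumption that $s$ is an endpoint or that $\u$ avoids $s$ and $t$. So even the preliminary claim that $\w'$ is torically reduced is unproven; it is part of what the conjecture asserts, since TFC requires torically reduced by Definition~\ref{defn:faux-cfc}. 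Second, the projection step has a structural problem you understate: after invoking that $w$ is TFC to get a short-braid-and-shift sequence from $[\w]$ to $[\v^+]$, deleting the two tracked letters from every intermediate word need not leave a valid sequence of moves --- a cyclic shift of the long word whose shifted letter is one of the tracked pair projects to no move at all, and a commutation involving a tracked letter projects to nothing, so the residual sequence may fail to connect $[\w']$ to $[\v]$ by legal moves on words of length $\ell(w)-2$. Identifying ``which'' occurrence of $s$ is the tracked one is also ambiguous after shifts, since all occurrences of $s$ lie on a common toric vertex chain. Until these points are resolved the statement remains, as in the paper, a conjecture.
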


\section{Toric heaps and conjugacy in Coxeter groups}\label{sec:conjugacy}

We will wrap up this paper by revisiting some of the existing results on Coxeter theory from Section~\ref{sec:coxeter-fc-cfc}, cast them in a cyclic reducibility framework, and discuss some open problems. 

\begin{thm}\label{thm:coxeter-elts}
  Let $c_1$ and $c_2$ be Coxeter elements of $W$. Then
\begin{enumerate}[(i)]
\item $c_1=c_2$ if and only if they have the same heap, and
\item $c_1$ and $c_2$ are conjugate if and only if they have the same
  toric heap.
\end{enumerate}
\end{thm}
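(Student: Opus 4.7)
The plan is to reduce both parts to two facts already established in the excerpt: the bijection $c\colon\Acyc(\Gamma)\to\C(W,S)$ described in Section~\ref{subsec:coxeter-elements}, and, for (ii), the Erikssons' Theorem~\ref{thm:erikssons} which identifies conjugacy of Coxeter elements with toric equivalence of their acyclic orientations. The single structural observation driving everything is that a reduced word for a Coxeter element uses every generator exactly once, so the labeling maps $\phi_c\colon P_c\to\Gamma$ and $\tau_c\colon T_c\to\Gamma$ are \emph{bijections} onto the vertex set of $\Gamma$, and by the proposition in Section~\ref{subsec:coxeter-elements} we have $P_c=P(\Gamma,\omega(c))$ and correspondingly $T_c=T(\Gamma,[\omega(c)])$.

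For part (i), the forward direction is immediate: since Coxeter elements are FC, Proposition~\ref{prop:isomorphic-heaps} shows that the heap is well defined up to isomorphism independent of the choice of reduced word, so $c_1=c_2$ gives $\HHH(c_1)\cong\HHH(c_2)$. For the converse, assume an isomorphism $(\sigma,\id_\Gamma)$ in $\HEAP(\Gamma)$ between $\HHH(c_1)$ and $\HHH(c_2)$; the relation $\phi_{c_2}\circ\sigma=\phi_{c_1}$ combined with the bijectivity of the $\phi_{c_i}$ forces $\sigma=\phi_{c_2}^{-1}\circ\phi_{c_1}$. The requirement that $\sigma$ be a poset isomorphism then translates, via the identification of vertices through their labels, to the equality $\omega(c_1)=\omega(c_2)$ in $\Acyc(\Gamma)$. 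Since $c\colon\Acyc(\Gamma)\to\C(W,S)$ is a bijection, this yields $c_1=c_2$.

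For part (ii), the forward direction follows from Theorem~\ref{thm:erikssons}: if $c_1$ and $c_2$ are conjugate, then $\omega(c_1)\equiv\omega(c_2)$, so $T_{c_1}=T(\Gamma,[\omega(c_1)])=T(\Gamma,[\omega(c_2)])=T_{c_2}$ as the very same toric poset, and therefore $\TTT(c_1)\cong\TTT(c_2)$. For the converse, assume $\TTT(c_1)\cong\TTT(c_2)$ in $\TORHEAP(\Gamma)$ via some $(\sigma,\id_\Gamma)$. Exactly as in (i), the bijectivity of $\tau_{c_1},\tau_{c_2}$ pins down the underlying bijection $\sigma=\tau_{c_2}^{-1}\circ\tau_{c_1}$, and the condition that $\sigma$ be a toric poset isomorphism forces $[\omega(c_1)]=[\omega(c_2)]$ in $\Acyc(\Gamma)/\!\!\equiv$. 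A second application of Theorem~\ref{thm:erikssons} then gives that $c_1$ and $c_2$ are conjugate.

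The only point demanding care is the convention that "same heap" means isomorphic in $\HEAP(\Gamma)$ (respectively, $\TORHEAP(\Gamma)$), so that the graph component of any morphism is $\id_\Gamma$; once that is fixed, the bijectivity of the labelings for Coxeter elements reduces the problem to a labeled-orientation comparison, and there is no real obstacle. The one place a careless reader could stumble is conflating isomorphism with equality on the underlying set $[n]$, but the bijectivity of $\phi_c,\tau_c$ makes the induced map canonical, so there is no loss in passing to the level of orientations.
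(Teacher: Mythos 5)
Your proposal is correct and follows the same route as the paper, which disposes of the theorem in a single sentence: part (i) is an immediate consequence of Matsumoto's theorem (a Coxeter element admits only short braid relations, so its reduced words form one commutativity class and hence one heap), and part (ii) is exactly Theorem~\ref{thm:erikssons} restated in toric heap language via the bijection $c\colon\Acyc(\Gamma)\to\C(W,S)$. Your write-up merely fills in the details the paper leaves implicit --- in particular the observation that the labeling maps are bijections for Coxeter elements, which pins down the isomorphism and reduces everything to comparing (toric) acyclic orientations --- and there is no gap.
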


The first statement in Theorem~\ref{thm:coxeter-elts} is an immediate consequence of Matsumoto's theorem, and the second is Theorem~\ref{thm:erikssons} using the toric heap language. Note that this fails if we drop the assumption that $w$ is a Coxeter element, because in the finite Coxeter group $W(A_2)$,
\begin{equation}\label{eq:A_2}
(s_1s_2)s_1(s_1s_2)^{-1}=s_1(s_2s_1s_2)s_1=s_1(s_1s_2s_1)s_1=s_2. 
\end{equation}
More generally, examples like this exist when distinct subsets $T,U\subseteq S$, and hence, their standard parabolic subgroups, $W_T$ and $W_U$, are conjugate in $W$. This phenomenon has been completely characterized by Deodhar~\cite{deodhar1982root}, and it only happens when $W_T$ is finite. The technical condition can be found in \cite[Theorem 3.1.3]{krammer2009conjugacy}.

The proof of the second statement in Theorem~\ref{thm:coxeter-elts} heavily relies on two properties of Coxeter elements. The first is toric equivalence, and the second is a theorem proven independently by Speyer in 2009 \cite{speyer2009powers} and by the Erikssons in 2010 \cite{eriksson2010words}: In an infinite irreducible Coxeter group, all Coxeter elements are logarithmic. It is a simple exercise to extend this to the general case of reducible Coxeter groups. Of course, the necessary and sufficient condition is that every connected component of $\Gamma$ describes an infinite group. In \cite{boothby2012cyclically}, a CFC element $w\in W$ was said to be \emph{torsion free} if every factor of the parabolic subgroup $W_{\supp(w)}$ describes an infinite group. Equivalently, none of the connected components of the induced graph $\Gamma[\supp(w)]$ are of finite-type. It has been proven (see \cite{marquis2014conjugacy,boothby2012cyclically}) that CFC elements are logarithmic if and only if they are torsion free. Marquis proved that a similar result holds more generally, but it requires a relaxation of the definition of torsion free.

\begin{defn}[\cite{marquis2014conjugacy}]
An element $w\in W$ is \emph{torsion free} if it has no reduced decomposition of the form $w=w_In_I$ for some spherical subset $I\subseteq S$, some $w_I\in W_I\setmin\{1\}$, and some $n_I$ normalizing $W_I$.
\end{defn}

To see why this is a necessary requirement to being logarithmic, suppose that $w$ is not torsion free and write $w=w_In_I$. Then for each $k\in\Z$, we can write $w^k=w_kn_I^k$ for some $w_k\in W_I$. Since $W_I$ is finite, there must be some $m>k$ such that $u:=w_k=w_m$, and so
\[
w^{m-k}=w^{-k}w^m=(n_I^{-k}u^{-1})(un_I^m)=n_I^{m-k}.
\]
Note that such a $w$ is not logarithmic because
\[
\ell(w^{m-k})=\ell(n_I^{m-k})\leq(m-k)\ell(n_I)<(m-k)(\ell(w_I)+\ell(n_I))\leq(m-k)\ell(w).
\]
Remarkably, Marquis proved that up to toric equivalence, this is the \emph{only} barrier to a torically reduced element being logarithmic. 

\begin{thm}[\cite{marquis2014conjugacy}]\label{thm:marquis}
A torically reduced $w\in W$ is logarithmic if and only if every torically equivalent $u\in[w]$ is torsion-free. 
\end{thm}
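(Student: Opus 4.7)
The plan is to prove both directions of this biconditional separately, with the reverse implication being the substantive part. For the forward direction ($\Rightarrow$), the key observation is that being logarithmic is invariant under toric equivalence. Given $w$ torically reduced and logarithmic, and any cyclic shift $\u = \w_2 \w_1$ of $\w = \w_1 \w_2$, the identity $\w^{k+1} = \w_1(\w_2\w_1)^k\w_2 = \w_1 \u^k \w_2$ exhibits $\u^k$ as a contiguous subword of the reduced expression $\w^{k+1}$; since contiguous subwords of reduced expressions are reduced, $\ell(u^k) = k\ell(u)$ for all $k$, so $u$ is logarithmic. Combined with the obvious invariance under braid relations (which preserve the group element), this shows every element of $[w]$ is logarithmic. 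The conclusion then follows from the displayed computation preceding the theorem: if some $u \in [w]$ were not torsion-free, writing $u = u_I n_I$ with $I$ spherical and $u_I \neq 1$ would yield $u^{m-k} = n_I^{m-k}$ for appropriate integers $m > k$, giving $\ell(u^{m-k}) < (m-k)\ell(u)$ and contradicting that $u$ is logarithmic.

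For the reverse direction ($\Leftarrow$), I would argue by contrapositive: assume $w$ is torically reduced but not logarithmic, and construct some $u \in [w]$ that is not torsion-free. Pick the minimal $k \geq 2$ with $\ell(w^k) < k\ell(w)$. By Matsumoto's theorem, the reduction of $\w^k$ to a shorter expression proceeds via a sequence of braid relations that ultimately brings two occurrences of some generator adjacent, enabling a nil cancellation. The letters participating in these braid moves generate a standard parabolic subgroup $W_I$ that must be \emph{spherical}: if $W_I$ were infinite, the cancellation pattern could not close up within $\w^k$ by minimality of $k$ and the analogous result for Coxeter elements in irreducible infinite Coxeter groups. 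The complementary letters of $\w$ — those outside $I$, or which can be commuted past the $I$-block using short braid relations — assemble into an element $n_I$ that normalizes $W_I$, a fact forced by the structure of reduced expressions crossing parabolic boundaries.

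To realize the factorization as $u = u_I n_I$ for some $u \in [w]$, I would then use cyclic shifts together with short braid relations — precisely the defining operations of toric equivalence — to collect the $I$-letters of $\w$ into a contiguous initial segment of some torically equivalent expression $\u$. These $I$-letters compose to a nontrivial $u_I \in W_I \setminus \{1\}$, and the tail forms $n_I$ normalizing $W_I$, witnessing that $u \in [w]$ is not torsion-free, as required.

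The main obstacle is the combinatorial localization step in the reverse direction: proving that the generators $I$ implicated in the cancellation of $\w^k$ generate a \emph{finite} parabolic subgroup, and that toric moves suffice to gather all occurrences of $I$-letters into a single initial block of some $\u \in \RRR_{\tor}(w)$. This is the technical heart of Marquis's argument in \cite{marquis2014conjugacy}. The essential contribution of the toric heap framework developed in this paper is to identify the correct hypothesis: replacing ``cyclically reduced'' by ``torically reduced'' ensures that toric moves cannot themselves decrease length, which is exactly what is needed for the localization argument to be valid and closes the subtle gap in Marquis's original statement.
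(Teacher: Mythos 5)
First, be aware that the paper does not actually prove this theorem: it is quoted from \cite{marquis2014conjugacy}, and the only argument supplied in the text is the displayed computation immediately preceding the statement, which establishes the easy implication that a non-torsion-free element cannot be logarithmic. Your forward direction is correct and consistent with that: the identity $\w^{k+1}=\w_1\u^k\w_2$ does exhibit $\u^k$ as a contiguous subword of a reduced word (using that $w$ is logarithmic so $\w^{k+1}$ is reduced), braid relations preserve the group element, and since $w$ is torically reduced every intermediate word in a chain of toric moves stays reduced, so the logarithmic property propagates to all of $[w]$; combining this with the paper's pigeonhole computation finishes that implication.

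The reverse direction, however, contains a genuine gap: every step that makes the theorem hard is asserted rather than proven. Specifically, (a) the claim that the generators implicated in the length collapse of $\w^k$ generate a \emph{finite} standard parabolic $W_I$ is justified only by ``the cancellation pattern could not close up \dots by minimality of $k$ and the analogous result for Coxeter elements,'' but the Speyer--Eriksson theorem concerns Coxeter elements only and does not transfer to arbitrary words; (b) the claim that the complementary letters compose to an element $n_I$ normalizing $W_I$ is stated as ``forced by the structure of reduced expressions crossing parabolic boundaries'' with no argument; and (c) the claim that cyclic shifts and short braid relations suffice to gather all $I$-letters into a contiguous initial block of some $\u\in\RRR_{\tor}(w)$ is likewise unsupported. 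Items (a)--(c) \emph{are} the theorem, not steps toward it, and Marquis's actual argument does not proceed by this kind of word-combinatorial bookkeeping but by different (geometric) methods building on Krammer's analysis of the conjugacy problem. Your closing observation about why ``torically reduced'' rather than ``cyclically reduced'' is the right hypothesis is correct and matches the point the paper makes in Section~\ref{sec:conjugacy}, but identifying the correct hypothesis does not close the localization argument; as written, the proposal establishes only one implication.
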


A simple example of a non torsion-free element in an infinite group can be found by modifying our Running Example from living in the group $W(B_2)$ to $W(\widetilde{C}_2)$. 

\begin{ex}\label{ex:affine-C2}
Consider the faux CFC element $w=s_0 s_1 s_0 s_1 s_2$ in $\widetilde{C}_2$, as shown below:
\[
  \begin{tikzpicture}[scale=.7,>=stealth', shorten >= 1pt, shorten <= 1pt]
    \begin{scope}[shift={(-5,0)}]
      \draw[fill=black] {(0,0) circle (2pt) node[label=below:$s_0$]{}};
      \draw[fill=black] {(1.5,0) circle (2pt) node[label=below:$s_1$]{}};
      \draw[fill=black] {(3,0) circle (2pt) node[label=below:$s_2$]{}};
      \draw {(0.75,0) node[label=above:{\small $4$}]{}};
      \draw {(2.25,0) node[label=above:{\small $4$}]{}};
      \draw {[-] (0,0)--(1.5,0) [-] }; \draw {[-] (1.5,0)--(3,0) [-] };
    \end{scope}
    \begin{scope}[shift={(1,1)},scale=.9]
      \node (1) at (.3,0.1) {\small $w^2$};
      \node (2) at (1,0) {\small $=$};
      \node (3) at (4.2,0) {\small $(s_0s_1s_0s_1s_2)(s_0s_1s_0s_1s_2)$};
      \node (4) at (1,-1) {\small $=$};
      \node (5) at (4.2,-1) {\small $(s_1s_0s_1s_0s_2)(s_0s_1s_0s_1s_2)$};
      \node (6) at (1,-2) {\small $=$};
      \node (7) at (3.4,-2) {\small $s_1s_0s_1s_2s_1s_0s_1s_2$.};
	\end{scope}
  \end{tikzpicture}
\]
 Clearly, $w$ is faux CFC but is not logarithmic because $\ell(w^2)<2\ell(w)$. It is not torsion-free because taking $I=\{s_0\}$, we can write $w=w_In_I=n_Iw_I$ for $w_I=s_0$ and $n_I=s_1s_0s_1s_2$.
\end{ex}

Thus far, we have seen how cyclic reducibility affects ordinary reducibility. Now, we ask how it affects conjugacy.  

Matsumoto's theorem says that if $\u$ and $\w$ are reduced words for the same group element, then they differ by braids, i.e., $\u\approx\w$. We are interested in a ``cyclic version'' of Matsumoto's theorem, which asks whether the cyclic words of torically reduced conjugate elements differ by braids, i.e., $[\u]\approx[\w]$. This is to the conjugacy problem what Matsumoto's theorem is to the word problem. 

\begin{defn} 
  A $W$-conjugacy class $C$ satisfies the \emph{cyclic version of Matsumoto's theorem} (CVMT) if any two reduced words of torically reduced elements in $C$ differ by braid relations and cyclic shifts.
\end{defn}

As we have seen in Eq.~\eqref{eq:A_2}, the CVMT trivially fails in conjugacy classes that have a minimal cyclically reduced element with support $T\subseteq S$ conjugate to some other $U\subseteq S$. We conjecture that this is the only times where it fails. In a recent paper~\cite{marquis2014conjugacy}, Marquis proved the CVMT for all elements $w\in W$ of infinite order with property ($\Cent$), which means that whenever $w$ normalizes a finite parabolic subgroup of $W$, it centralizes the subgroup. Though this condition may seem peculiar, it is quite mild, as it is satisfied by any torsion-free normal subgroup $W_0$ of finite index, which always exists in any infinite Coxeter group by Selberg's Lemma and the linearity of Coxeter groups \cite[Lemma 1]{dranishnikov1999every}. Torically reduced elements that have property ($\Cent$), or are in finite groups, are additionally strongly cyclically reduced \cite[Corollary C]{marquis2014conjugacy}, which means that $\ell(w)=\min\{\ell(vwv^{-1})\mid v\in W\}$, or equivalently, $w$ is of minimal length in its conjugacy class. As a corollary, it follows that the CVMT holds for torsion-free CFC elements. Equivalently, Theorem~\ref{thm:coxeter-elts} can be extended to this class of elements. 

\begin{conj}
The cyclic version of Matsumoto's theorem holds for a $W$-conjugacy classes, as long as the parabolic closure $W_{\supp(w)}$ of its minimal elements contains only infinite irreducible components. 
\end{conj}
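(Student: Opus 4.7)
The plan is to reduce the conjecture to Marquis's CVMT theorem for torically reduced elements of infinite order satisfying property $(\Cent)$ \cite{marquis2014conjugacy}. The hypothesis that every irreducible component of $W_{\supp(w)}$ is infinite will be used to force $(\Cent)$ automatically for the minimal-length representatives of $C$. The argument has three stages: reduction to the parabolic closure, reduction to a single irreducible infinite factor, and verification of $(\Cent)$.

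First I would let $w$ be a minimal-length element of $C$ with $I := \supp(w)$, so $W_I = W_{\supp(w)}$. A standard support argument (using Matsumoto's theorem plus the fact that cyclic shifts and braid relations do not introduce new generators) shows that any other minimal-length $u \in C$ satisfies $\supp(u) = I$ and that any sequence of braids and cyclic shifts witnessing $[\u] \approx [\w]$ can be performed inside $W_I$. So without loss of generality $W = W_I$ has only infinite irreducible components. Next, decompose $W = W_{I_1} \times \cdots \times W_{I_k}$ into irreducible factors and correspondingly $w = w_1 \cdots w_k$; since elements of distinct factors commute and cyclic shifts respect the factorization, it suffices to prove the CVMT on each factor. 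Thus we reduce to the case that $W$ is irreducible and infinite.

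Second, I would verify that every torically reduced minimal-length $w \in C$ satisfies property $(\Cent)$. Arguing contrapositively, suppose $w$ normalizes a finite parabolic $W_J$ without centralizing it. Using Howlett's / Brink--Howlett's description of $N_W(W_J)$ (the semidirect product of $W_J$ with a complement of ``admissible'' normalizing elements), one extracts a reduced factorization $w = w_J n_J$ with $w_J \in W_J \setminus \{1\}$ and $n_J \in N_W(W_J)$. This is precisely the hypothesis that $w$ is \emph{not} torsion-free in Marquis's sense. Applying Theorem~\ref{thm:marquis} to $w$ then shows some torically equivalent $u \in [w]$ is not logarithmic, and the standard length computation $\ell(u^m) = \ell(n_J^m) \leq m\ell(n_J) < m\ell(u)$ (as in Example~\ref{ex:affine-C2}) produces a conjugate of $w$ of strictly smaller length, contradicting the minimality of $w$ in $C$. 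Hence every torically reduced minimal-length $w$ in $C$ has $(\Cent)$.

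With $(\Cent)$ established, Marquis's theorem immediately yields that any two torically reduced elements in $C$ are related by braid relations and cyclic shifts on their cyclic words, which is exactly the CVMT for $C$. The main obstacle is the second stage: extracting the factorization $w = w_J n_J$ from the mere assumption that $w$ normalizes $W_J$ non-trivially requires delicate length-function bookkeeping with the exchange condition and the Brink--Howlett normalizer decomposition, and one must verify carefully that the reduction to irreducible infinite factors in stage one preserves minimality in $C$ rather than only in the parabolic $W_I$.
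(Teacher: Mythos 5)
This statement is posed in the paper as an open conjecture; the paper gives no proof of it, so your proposal cannot be ``the same approach'' as anything in the text and has to be judged on its own. As written it has a fatal gap at its central step. You claim that if a minimal-length $w$ fails property $(\Cent)$ --- i.e.\ normalizes some finite parabolic $W_J$ without centralizing it --- then the Brink--Howlett decomposition $N_W(W_J)=W_J\rtimes N_J$ yields a reduced factorization $w=w_Jn_J$ with $w_J\in W_J\setminus\{1\}$, so that $w$ is not torsion-free in Marquis's sense. That implication is false: the decomposition only gives $w=w_Jn_J$ with $w_J\in W_J$, and $w_J$ may well be trivial. An element $w=n_J$ with trivial $W_J$-component can still act on $W_J$ by a nontrivial (diagram) automorphism, hence normalize without centralizing; such a $w$ fails $(\Cent)$ but satisfies Marquis's torsion-free condition, which explicitly requires $w_I\in W_I\setminus\{1\}$. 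So ``not $(\Cent)$'' does not reduce to ``not torsion-free,'' and Theorem~\ref{thm:marquis} cannot be invoked the way you do.

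Even setting that aside, the contradiction you derive does not work: concluding that some $u\in[w]$ is not logarithmic gives $\ell(u^m)<m\ell(u)$, which says nothing about the existence of a conjugate of $w$ of length less than $\ell(w)$. Non-logarithmic elements of minimal length in their conjugacy class certainly exist (any torsion element is one), so ``not logarithmic'' is not incompatible with minimality. Two further points need real arguments rather than assertions: (a) your stage-one reduction assumes that minimal-length elements of $C$ with common support $I$ are already conjugate inside $W_I$ and that a witnessing sequence of braids and cyclic shifts can be taken inside $W_I$ --- but producing such a sequence is exactly what the CVMT asks for, so this is close to circular; and (b) Marquis's CVMT result applies to elements of \emph{infinite order} with $(\Cent)$, and you never verify infinite order from the hypothesis on $W_{\supp(w)}$. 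The overall strategy --- force $(\Cent)$ from the all-infinite-components hypothesis and then quote Marquis --- is a reasonable line of attack on the conjecture, but the two implications it hinges on (failure of $(\Cent)$ forces a nontrivial $W_J$-factor, and non-logarithmic contradicts minimality) are both unproved and, in the first case, false as stated.
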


Finally, it should be noted that Marquis' aforementioned results were incorrectly stated with ``cyclically reduced'' instead of ``torically reduced''. For example, the cyclically reduced element $w=s_3s_2s_1s_2$ in $W(B_2)$ from Example~\ref{ex:3212} is not strongly cyclically reduced as \cite[Corollary C]{marquis2014conjugacy} would suggest, because 
\[
(s_2s_3)^{-1}(s_3s_2s_1s_2)(s_2s_3)=s_3(s_2s_3s_2s_1)s_3=s_3(s_3s_2s_3s_1)s_3=s_2s_1.
\]
Fortunately, this flaw is easily fixable. The author defined cyclically reduced elements as we did in Definition~\ref{defn:cyclically-reduced}, but then in the proofs, used the subtly stronger concept of being torically reduced.

\section{Concluding remarks}\label{sec:conclusions}

The purpose of this paper is to present a framework for studying what we call ``cyclic reducibility'' in Coxeter groups, and show how this relates to ordinary problems in reducibility and conjugacy. We introduced the concept of a \emph{toric heap}, which is a labeled toric poset and a cyclic version of Viennot's classic ``heaps of pieces.'' We also formalized concepts such as cyclic words, cyclic commutativity classes, as well as cyclically and torically reduced words and elements in Coxeter groups. The notion of a toric heap morphism allowed us to formalize concepts such as labeled toric extensions, and prove that there is a bijection between (labeled) total toric extensions of the toric heap of a word, and its cyclic commutativity classes. This is a cyclic analogue of a theorem by Stembridge about labeled linear extensions of a heap and commutativity classes \cite{stembridge1996fully}. However, not all fundamental properties carried over. We saw how elements can admit long braid relations but still have a unique toric heap. This brought up a new class of elements called \emph{torically fully commutative} (TFC), which generalized the notion of the cyclically fully commutative (CFC) elements. These elements led to another theorem of Stembridge that generalized a characterization of the FC elements. Classifying elements that are TFC but not CFC remains an open question (see Conjecture~\ref{conj:faux-cfc}). Our cyclic reducibility framework casts old and new results in a more transparent light. We are currently exploring whether techniques and proofs involving cyclic reducibility and conjugacy can be simplified and streamlined using toric heaps.

  Toric heaps should be of general interest outside of Coxeter groups, both as interesting mathematical objects in their own right, and as a convenient framework when labeled cyclic poset structures arise in various applications. A common theme with toric poset research has been to explore toric analogues of features of ordinary posets. Similarly, ordinary heaps have been applied in many settings, and some of these should have natural toric analogues that can be explored. We are also working on a more thorough analysis of the categories of heaps and toric heaps. Regarding applications, it is difficult to predict where and how toric heaps might arise in the future. Naturally, this should not come as a surprise; back in 1986, Viennot surely did not anticipate his new theory of heaps springing up in Coxeter groups, Lorentzian quantum gravity \cite{viennot2014heaps}, models for polymers \cite{brak2007motzkin}, modeling with Petri nets \cite{gaubert1999modeling}, and discrete-event systems \cite{su2012synthesis}.



\begin{thebibliography}{10}

\bibitem{adin2018cyclic}
R.~M. Adin, I.~M. Gessel, V.~Reiner, and Y.~Roichman.
\newblock Cyclic quasi-symmetric functions.
\newblock In {\em $31^{\rm th}$ Int'l Conf. on Formal Power Series and
  Algebraic Combonatorics (FPSAC)}, volume 82B, pages Article \#67, 12 pp.,
  Ljubljana, 2019.

\bibitem{biagioli2015fully}
R.~Biagioli, F.~Jouhet, and P.~Nadeau.
\newblock Fully commutative elements in finite and affine {C}oxeter groups.
\newblock {\em Monatsh. Math.}, 178(1):1--37, 2015.

\bibitem{billey2001kazhdan}
S.~C. Billey and G.~S. Warrington.
\newblock Kazhdan-{L}usztig polynomials for 321-hexagon-avoiding permutations.
\newblock {\em J. Algebraic Combin.}, 13(2):111--136, 2001.

\bibitem{bjorner2005combinatorics}
A.~Bj{\"o}rner and F.~Brenti.
\newblock {\em Combinatorics of {C}oxeter Groups}.
\newblock Springer Verlag, New York, 2005.

\bibitem{boothby2012cyclically}
T.~Boothby, J.~Burkert, M.~Eichwald, D.~C. Ernst, R.~M. Green, and M.~Macauley.
\newblock On the cyclically fully commuative elements of {C}oxeter groups.
\newblock {\em J. Algebraic Combin.}, 36(1):123--148, 2012.

\bibitem{bousquet2002lattice}
M.~Bousquet-M{\'e}lou and A.~Rechnitzer.
\newblock Lattice animals and heaps of dimers.
\newblock {\em Discrete Math.}, 258(1-3):235--274, 2002.

\bibitem{bousquet1992empilements}
M.~Bousquet-M{\'e}lou and X.~G. Viennot.
\newblock Empilements de segments et q-{\'e}num{\'e}ration de polyominos
  convexes dirig{\'e}s.
\newblock {\em J. Combin. Theory Ser. A}, 60(2):196--224, 1992.

\bibitem{brak2007motzkin}
R.~Brak, G.~K. Iliev, A.~Rechnitzer, and S.~G. Whittington.
\newblock Motzkin path models of long chain polymers in slits.
\newblock {\em J. Phys. A}, 40(17):4415, 2007.

\bibitem{cartier1969problemes}
P.~Cartier and D.~Foata.
\newblock {\em Problemes combinatoires de commutation et re\'arrangements},
  volume~85 of {\em Lecture Notes in Mathematics}.
\newblock Springer Verlag, 1969.

\bibitem{deodhar1982root}
V.~V. Deodhar.
\newblock On the root system of a {C}oxeter group.
\newblock {\em Commun. Algebra}, 10(6):611--630, 1982.

\bibitem{develin2016toric}
M.~Develin, M.~Macauley, and V.~Reiner.
\newblock Toric partial orders.
\newblock {\em Trans. Amer. Math. Soc.}, 368(4):2263--2287, 2016.

\bibitem{di2010q-systems}
P.~Di~Francesco and R.~Kedem.
\newblock Q-systems, heaps, paths and cluster positivity.
\newblock {\em Commun. Math. Phys.}, 293(3):727, 2010.

\bibitem{dranishnikov1999every}
A.~N. Dranishnikov and T.~Januszkiewicz.
\newblock Every {C}oxeter group acts amenably on a compact space.
\newblock {\em Topology Proc.}, 24:135--141, 1999.

\bibitem{eriksson2009conjugacy}
H.~Eriksson and K.~Eriksson.
\newblock Conjugacy of {C}oxeter elements.
\newblock {\em Electron. J. Combin.}, 16(2):\#R4, 2009.

\bibitem{eriksson2010words}
H.~Eriksson and K.~Eriksson.
\newblock Words with intervening neighbours in infinite {C}oxeter groups are
  reduced.
\newblock {\em Electron. J. Combin.}, 17(1):\#N9, 2010.

\bibitem{fedou1995fonctions}
J.-M. F\'{e}dou.
\newblock Sur les fonctions de {B}essel.
\newblock {\em Discrete Math.}, 139(1-3):473--480, 1995.

\bibitem{fox2014conjugacy}
B.~Fox.
\newblock Conjugacy classes of cyclically fully commutative elements in
  {C}oxeter groups of type {A}.
\newblock Master's thesis, Northern Arizona University, 2014.

\bibitem{gaubert1999modeling}
S.~Gaubert and J.~Mairesse.
\newblock Modeling and analysis of timed {P}etri nets using heaps of pieces.
\newblock {\em IEEE Trans. Automat. Contr.}, 44(4):683--697, 1999.

\bibitem{green2007full}
R.~M. Green.
\newblock Full heaps and representations of affine {K}ac-{M}oody algebras.
\newblock {\em Int. Electron. J. Algebra}, 2:137--188, 2007.

\bibitem{green2010combinatorics}
R.~M. Green.
\newblock On the combinatorics of {K}ac's asymmetry function.
\newblock {\em Comment. Math. Univ. Carolin.}, 51(2):217--235, 2010.

\bibitem{green2002freely}
R.~M. Green and J.~Losonczy.
\newblock Freely braided elements in {C}oxeter groups.
\newblock {\em Ann. Comb.}, 6:337--348, 2002.

\bibitem{humphreys1990reflection}
J.~E. Humphreys.
\newblock {\em Reflection Groups and {C}oxeter Groups}.
\newblock Cambridge University Press, Cambridge, UK, 1990.

\bibitem{krammer2009conjugacy}
D.~Krammer.
\newblock The conjugacy problem for {C}oxeter groups.
\newblock {\em Groups Geom. Dyn.}, 3(1):71--171, 2009.

\bibitem{lalonde1995lyndon}
P.~Lalonde.
\newblock Lyndon heaps: an analogue of {L}yndon words in free partially
  commutative monoids.
\newblock {\em Discrete Math.}, 145(1-3):171--189, 1995.

\bibitem{macauley2016morphisms}
M.~Macauley.
\newblock Morphisms and order ideals of toric posets.
\newblock {\em Mathematics}, 4(2):31 pages, 2016.

\bibitem{marquis2014conjugacy}
T.~Marquis.
\newblock Conjugacy classes and straight elements in coxeter groups.
\newblock {\em J. Algebra}, 407:68--80, 2014.

\bibitem{matsumoto1964generateurs}
H.~Matsumoto.
\newblock G\'en\'erateurs et relations des groupes de weyl g\'en\'eralis\'es.
\newblock {\em C. R. Acad. Sci. Paris}, 258:3419--3422, 1964.

\bibitem{petreolle2016generating}
M.~P{\'e}tr{\'e}olle.
\newblock Generating series of cyclically fully commutative elements is
  rational, 2016.
\newblock ArXiv Preprint. math.CO/1612.03764.

\bibitem{petreolle2017characterization}
M.~P\'etr\'eolle.
\newblock Characterization of cyclically fully commutative elements in finite
  and affine {C}oxeter groups.
\newblock {\em Eur. J. Combin.}, 61:106--132, 2017.

\bibitem{pretzel1986reorienting}
O.~Pretzel.
\newblock On reorienting graphs by pushing down maximal vertices.
\newblock {\em Order}, 3(2):135--153, 1986.

\bibitem{speyer2009powers}
D.~E. Speyer.
\newblock Powers of {C}oxeter elements in infinite groups are reduced.
\newblock {\em Proc. Amer. Math. Soc.}, 137:1295--1302, 2009.

\bibitem{stembridge1996fully}
J.~R. Stembridge.
\newblock On the fully commutative elements of {C}oxeter groups.
\newblock {\em J. Algebraic Combin.}, 5:353--385, 1996.

\bibitem{stembridge1998enumeration}
J.~R. Stembridge.
\newblock The enumeration of fully commutative elements of {C}oxeter groups.
\newblock {\em J. Algebraic Combin.}, 7(3):291--320, 1998.

\bibitem{su2012synthesis}
R.~Su, J.~H. Van~Schuppen, and J.~E. Rooda.
\newblock The synthesis of time optimal supervisors by using heaps-of-pieces.
\newblock {\em IEEE Trans. Automat. Contr.}, 57(1):105--118, 2012.

\bibitem{viennot1986combinatoire}
G.~X. Viennot.
\newblock Heaps of pieces, {I}: {B}asic definitions and combinatorial lemmas.
\newblock In G.~Labelle and P.~Leroux, editors, {\em Combinatoire
  \'Enum\'erative}, pages 321--350. Springer-Verlag, 1986.

\bibitem{viennot2014heaps}
X.~G. Viennot.
\newblock Heaps of pieces and {2D} {L}orentzian quantum gravity.
\newblock In {\em Workshop on {2D} quantum gravity and statistical mechanics,
  Erwin Schr{\"o}dinger Institute}, 2014.

\bibitem{wachs2007poset}
M.~L. Wachs.
\newblock Poset topology: Tools and applications.
\newblock In {\em Geometric Combinatorics, IAS/Park City Mathematics Series},
  pages 497--615, 2007.

\bibitem{wildberger2003combinatorial}
N.~J. Wildberger.
\newblock A combinatorial construction for simply-laced {L}ie algebras.
\newblock {\em Adv. Appl. Math.}, 30(1-2):385--396, 2003.

\end{thebibliography}
\end{document}